\setlist{itemsep=4pt, topsep=4pt, leftmargin=17pt, listparindent=11pt}
\tikzstyle{EdgeStyle}=[very thick, color=cyan]
\tikzstyle{VertexStyle}=[shape=circle, inner sep=6pt, ball color=magenta!25!white]
\tikzset{edge/.style={very thick, color=cyan}}
\tikzset{ball/.style={shape=circle, inner sep=9pt, ball color=magenta!25!white}}
\tikzset{OuterBoundary/.style={line width=1.2pt}} 
\tikzset{RimHook/.style={cyan, line width=1.2pt, rounded corners=2pt}}
\tikzset{RHiball/.style={ball color=magenta!50!white}}
\tikzset{RHeball/.style={ball color=magenta!25!white}}
\tikzset{apball/.style={ball color=black}}
\tikzset{eedge/.style={very thick, color=cyan}}
\tikzset{vvertex/.style={ball color=magenta!25!white}}
\tikzset{Brick/.style={line width=1.2pt, rounded corners=5pt}}
\tikzset{ImaginaryLine/.style={color=gray}}
\tikzset{ImagLine/.style={dashed, color=cyan, line width=1.5}}
\newcommand\SA{\mathrm{SA}}
\crefname{ineq}{Ineq.}{Ineqs.}
\Crefname{ineq}{Inequality}{Inequalities}
\crefname{itm}{}{}
\crefname{rl}{Relation}{Relations}
\crefname{conjecture}{Conjecture}{Conjectures}
\newtheorem{theorem}{Theorem}[section]
\newtheorem{corollary}[theorem]{Corollary}
\newtheorem{lemma}[theorem]{Lemma}
\newtheorem{conjecture}[theorem]{Conjecture}
\newtheorem{proposition}[theorem]{Proposition}
\theoremstyle{definition}
\theoremstyle{remark}
\numberwithin{equation}{section}
\numberwithin{figure}{section}
\numberwithin{table}{section}
\DeclarePairedDelimiter\floor{\lfloor}{\rfloor}
\title[]{The $e$-positivity and Schur positivity of the chromatic symmetric functions of some trees}
\author[D.G.L.~Wang]{David G.L. Wang}
\address[David G.L. Wang]{School of Mathematics and Statistics \& Beijing Key Laboratory on MCAACI, Beijing Institute of Technology; Key Laboratory of Mathematical Theory and Computation in Information Security, Ministry of Industry and Information Technology, 102400 Beijing, R.R. China}
\email{glw@bit.edu.cn}
\author[M.M.Y. Wang]{Monica M.Y. Wang}
\address{
School of Mathematics and Statistics, Beijing Institute of 
Technology, 102400 Beijing, P. R. China}
\email{mengyu919@bit.edu.cn}
\keywords{chromatic symmetric function, $e$-positivity, Schur positivity, Young tableau}
\subjclass[2010]{05E05 05A17 05A15 05C15}
\thanks{This paper is supported by the General Program of National Natural 
Science Foundation of China (Grant No.~12171034) and 
the Fundamental Research Funds for the Central Universities in China (Grant No.~2021CX11012).}
\begin{document}
\bibliographystyle{abbrvnat}
\maketitle
\begin{abstract}
We investigate the $e$-positivity and Schur positivity of the chromatic symmetric functions of some spider graphs with three legs. We obtain the positivity classification of all broom graphs and that of most double broom graphs. The methods involve extracting particular $e$-coefficients of the chromatic symmetric function of these graphs with the aid of Orellana and Scott's triple-deletion property, and using the combinatorial formula of Schur coefficients by examining certain special rim hook tabloids. We also propose some conjectures on the $e$-positivity and Schur positivity of trees.
\end{abstract}

%
%
%
%
%
%
%
%


\section{Introduction}

\citet{Sta95} introduced 
the {\it chromatic symmetric function}
for a simple graph $G$ as
\[
X_G
=X_G(x_1, x_2, \ldots )
=\sum_{\kappa}\prod_{v\in V(G)}\mathbf{x}_{\kappa(v)}
\]
where $\mathbf{x}=(x_1, x_2, \ldots)$ is a countable set
of indeterminates, 
and the sum is over all proper colorings~$\kappa$, 
namely
colorings such that every each color class 
is an independent set.
The chromatic polynomial~$\chi_G(k)$
counts the number of proper colorings using $k$ colors,
which is a classical graph invariant dating back to~\citet{Bir12}.
The chromatic symmetric function satisfies $X_G(1^k)=\chi_G(k)$.
The chromatic symmetric function $X_G$ is a symmetric function.
Common bases for the algebra $\Lambda(x_1,x_2,\ldots)$
of symmetric functions include
the monomial symmetric functions~$\{m_\lambda\}$,
elementary symmetric functions~$\{e_\lambda\}$,
Schur symmetric functions~$\{s_\lambda\}$ and so on,
see \cite[Chapter 7]{Sta99B}.
For any basis~$\{b_{\lambda}\}$ of $\Lambda(x_1,x_2,\ldots)$, 
the graph $G$ is said to be {\it $b$-positive} 
if the expansion of $X_G$ in~$b_{\lambda}$ has only nonnegative coefficients.


A strong motivation of studying the positivity of chromatic symmetric functions
is Stanley and Stembridge's conjecture posed in 1993, see \cref{conj:ep:cf-inc}.
The smallest connected non-$e$-positive graph is the claw, 
whose chromatic symmetric function is 
\[
X_{\text{claw}}
=4e_4+5e_{31}-2e_{2^2}+e_{21^2}
=s_{31}-s_{2^2}+5s_{21^2}+8s_{1^4}.
\] 
A graph is \emph{claw-free} if it does not contain 
an induced subgraph which is isomorphic to the claw.
The \emph{incomparability graph} of a poset $P$ is the graph with vertex set $P$, in which two elements are adjacent if and only if they are incomparable.

\begin{conjecture}[\citeauthor{SS93}]\label{conj:ep:cf-inc}
Any claw-free incomparability graph is $e$-positive.
\end{conjecture}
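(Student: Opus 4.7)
The statement to be proved is the celebrated Stanley--Stembridge conjecture, which has remained open since 1993 and has resisted many attacks, so any realistic ``plan'' is more an outline of the best available strategic framework than a blueprint for a quick proof. The plan I would pursue is the Hessenberg-variety strategy initiated by \citet{Sta95}, refined by Shareshian--Wachs, and developed by Brosnan--Chow. First, by a theorem of Guay-Paquet, it suffices to treat incomparability graphs of natural unit interval orders (equivalently, indifference graphs of Dyck paths), since every claw-free incomparability graph can be obtained by restriction and every chromatic symmetric function in the Stanley--Stembridge class lies in the convex hull generated by such graphs. Thus I would fix a Dyck path $\mu$ and denote the corresponding natural unit interval graph by $G_\mu$.

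Second, I would pass to the chromatic quasisymmetric refinement $X_{G_\mu}(\mathbf{x};t)$ of Shareshian--Wachs, and invoke the Brosnan--Chow theorem identifying $\omega X_{G_\mu}(\mathbf{x};t)$ with the graded Frobenius characteristic of the Tymoczko dot action of $S_n$ on the cohomology ring $H^{*}(\mathrm{Hess}(S,\mu))$ of the regular semisimple Hessenberg variety associated with $\mu$. Under this dictionary, $e$-positivity of $X_{G_\mu}(\mathbf{x};1)$ translates into the assertion that each graded piece of $H^{*}(\mathrm{Hess}(S,\mu))$ decomposes as a non-negative combination of induced sign characters $\mathrm{Ind}_{S_\lambda}^{S_n}(\mathrm{sgn})$. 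The plan is then to construct, for every $\mu$, an explicit $S_n$-equivariant filtration or module decomposition whose subquotients are induced sign representations indexed by partitions $\lambda$. Candidates here include the Harada--Precup module construction, the permutohedral truncation filtrations of Abe--Horiguchi--Masuda--Murai--Sato, and the local $S_n$-module structure around fixed points that supports the GKM presentation of $H^{*}(\mathrm{Hess})$.

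The main obstacle is precisely this last step: while $\omega X_{G_\mu}(\mathbf{x};t)$ is known to be a \emph{virtual} sum of induced sign characters (Gasharov's Schur-positivity theorem, combined with Pieri-type identities, already yields nontrivial structural information), exhibiting an honest non-negative combinatorial or geometric decomposition at the level of modules has only been achieved in special families (abelian Hessenberg varieties, $K_n$, paths, spiders with restricted leg lengths, and the cases treated in the present paper). The combinatorial heart of the difficulty is to find, for each $\mu$, a set of ``tableau-like'' objects indexed by compositions $\alpha \vDash n$ whose generating function is $e_\alpha$ and which partition the proper colorings of $G_\mu$ in an $S_n$-equivariant way; every known approach (Stanley's $P$-tableau sign-reversing involution, Dahlberg--van Willigenburg's modular-law techniques, the triple-deletion machinery used in this paper) stumbles when the graph contains large-scale obstructions to any local sign-reversing matching.

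As a backup, I would try a two-pronged inductive attack in the spirit of the present paper: use the Orellana--Scott triple-deletion property, together with the modular law of Guay-Paquet and Abreu--Nigro, to reduce $X_{G_\mu}$ to a sum of $X_{G_{\mu'}}$ for strictly simpler $\mu'$, and then complete the induction by isolating leading $e$-coefficients through the combinatorial formula for $[e_\lambda]X_G$ derived from Stanley's power-sum expansion via sign-reversing involutions on broken circuits or on rim hook tabloids. This secondary strategy is essentially the one that succeeds for trees in the present paper; extending it to all natural unit interval graphs would require a uniform combinatorial certificate for $e$-positivity that nobody has yet produced, and is where I expect the plan to stall absent new ideas from the geometric side.
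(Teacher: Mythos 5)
There is nothing in the paper to compare your attempt against: the statement you were asked to prove is the Stanley--Stembridge conjecture, which the paper records precisely as \cref{conj:ep:cf-inc}, an open conjecture cited only as motivation; the authors prove nothing about it, and indeed your own text concedes that what you offer is a strategic outline rather than a proof. Judged as a proof attempt, it has a genuine and decisive gap. The reductions you invoke are real (Guay-Paquet's reduction of the claw-free incomparability case to incomparability graphs of natural unit interval orders, the Shareshian--Wachs chromatic quasisymmetric refinement, and the Brosnan--Chow identification with the dot action on the cohomology of regular semisimple Hessenberg varieties), but they only translate the problem: after all of them, the assertion that each graded piece of the dot-action representation is a nonnegative integer combination of induced sign characters $\mathrm{Ind}_{S_\lambda}^{S_n}(\sgn)$ is exactly equivalent to the $e$-positivity you set out to prove. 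Your ``plan'' for that step --- construct an equivariant filtration with induced-sign subquotients, or find a uniform tableau-like partition of proper colorings --- is precisely the part that is missing, and you acknowledge that it has only been carried out in special families (abelian Hessenberg varieties, complete graphs, paths, and the restricted spiders of the sort treated in this paper). Likewise your backup route via triple-deletion and the modular law is the method the paper uses to settle positivity for specific narrow tree families, and there is no argument here that it extends to all natural unit interval graphs.

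So the verdict is: no proof, and no overlap to assess, because the paper contains no proof either. If you want to contribute in the spirit of this paper, the honest targets are the concrete open items it isolates (for example \cref{conj:ab1:b=even:b<>2}, \cref{conj:spider:ab1}, or \cref{conj:Schur:SPS:22}), where the techniques you list --- extracting $e$-coefficients via \cref{lem:rec:Sabc} and \cref{prop:Wolfe}, or Schur coefficients via the special rim hook formula of \cref{thm:s-in-X} --- actually suffice to make progress, rather than the full Stanley--Stembridge conjecture, whose resolution requires an idea not present in your outline.
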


\citet{Wol97D} provided a powerful criterion that
any connected $e$-positive graph has a connected partition of every type,
where a connected partition of a graph $G$ is a partition $\{V_1, \dotsc, V_k\}$
of $V(G)$ such that each induced subgraph $G[V_i]$ is connected. 
Many graph classes are shown to be $e$-positive, including
complete graphs, paths, cycles, 
triad-free graphs,
generalized bull graphs, 
(claw, $K_3$)-free graphs, (claw, co-$P_3$)-free graphs,
$F$-free unit interval graphs for all 4-vertex graphs $F$ except the co-diamand, $K_4$, $4K_1$ and $2K_2$,
(claw, paw)-free graphs, (claw, co-paw)-free graphs, 
(claw, diamond, co-diamond)-free graphs,
(claw, triangle)-free graphs, (claw, co-$P_3$)-free graphs, 
(claw, co-diamond, $2K_2$)-free graphs,
$2K_2$-free unit interval graphs,
$K$-chains, lollipop graphs, triangular ladders,
(claw, co-claw)-free graph except the net,
Ferrers graphs;
see \cite{Cv16, Sta95, CH18, CH19, FHM19, Tsu18, HHT19, Dah19, Dv18, GS01, Ev04,  LY21}.
Graphs that are proved not to be $e$-positive include the dart,
generalized nets,
saltire graphs $\SA_{n,n}$,
augmented saltire graphs $AS_{n,n}$ and $AS_{n,n+1}$,
triangular tower graphs $TT_{n,n,n}$;
see \cite{DFv20, DSv20, FKKMMT20, FHM19}.
In 2020 \citet{DFv20} gave an infinite number of families of non-$e$-positive graphs that are not contractible to the claw. Moreover, one such family is additionally claw-free, thus establishing that the $e$-positivity is in general not dependent on the existence of an induced claw or of a contraction to a claw.

A second motivation of studying the positivity of chromatic symmetric functions
is the close relationship between the Schur postivity and representation theory.
The Schur functions are considered to be 
the most important basis of the algebra $\Lambda(x_1,x_2,\dots)$
from several perspectives, see~\citet{Mac95B,Mac15B,Sag01B} and \citet{Sta81,Sta99B}.
\citet{Gas96P} showed that any claw-free incomparability graph is Schur positive.
Every $e$-positive graph is Schur positive
since the $s_\lambda$-coefficient in $e_\mu$ 
is the Kostka number $K_{\lambda',\mu}$ which is nonnegative,
see~\citet[Exercise 2.12]{MR15B}.
A leading conjecture in this direction is due to \citet{Gas99} and \citet{Sta98}.

\begin{conjecture}[\citeauthor{Gas99,Sta98}]
Every claw-free graph is Schur positive.
\end{conjecture}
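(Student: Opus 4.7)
The plan is to attempt a structural induction on claw-free graphs, though the statement is a long-standing open conjecture, so any honest sketch must combine combinatorial manipulation of $X_G$ with representation-theoretic input. As a first step, I would work with the combinatorial formula for Schur coefficients of $X_G$ via rim hook tabloids (the formula highlighted in the paper's abstract): writing $[s_\lambda] X_G$ as a signed sum over such tabloids, the problem reduces to constructing, for each claw-free $G$ and each partition $\lambda$, a sign-reversing involution on these tabloids whose fixed-point set is enumerated nonnegatively by some feature of the graph's claw-free structure.

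The natural second ingredient is the Chudnovsky--Seymour structure theorem, which decomposes every claw-free graph into line graphs of multigraphs, quasi-line graphs, antiprismatic graphs, and a handful of composition operations. For a line graph $L(H)$, one can aim to realize $X_{L(H)}$ as the Frobenius characteristic of an explicit $\mathfrak{S}_n$-module built from proper edge-colorings of $H$; that would give Schur positivity of the base case essentially for free. The inductive step would then try to reduce a general claw-free $G$ to these base pieces, perhaps by iterated application of the Orellana--Scott triple-deletion property, pairing cancellations carefully so that Schur positivity is preserved at each stage.

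The hard part, and the reason the conjecture has remained open for over twenty-five years, is that none of the standard deletion, contraction, or structural decomposition operations preserve claw-freeness, so the induction cannot stay inside the class being studied. A sign-reversing involution approach also lacks a canonical starting point because Schur positivity is ultimately a representation-theoretic phenomenon and no natural $\mathfrak{S}_n$-action is known to realize $X_G$ for arbitrary claw-free $G$. A more realistic short-term plan, in the spirit of the present paper, is to test the rim hook tabloid method on tractable subfamilies where the relevant Schur coefficients can be computed explicitly, and to look for patterns in the involution's fixed points that might later be extended to the full claw-free setting.
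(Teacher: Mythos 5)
There is nothing to compare against here: the statement you were given is \emph{not} a theorem of the paper but an open conjecture, attributed to Gasharov and Stanley, which the paper merely records as motivation; no proof of it exists in the paper or, to date, anywhere else. Your submission is accordingly not a proof but a research programme, and you say so yourself. Concretely, none of its three load-bearing steps is carried out: (i) no sign-reversing involution on special rim hook tabloids is constructed, and you give no mechanism by which claw-freeness would control the fixed points, which is exactly the missing idea; (ii) the proposed base case for line graphs is unestablished --- it is not known how to realize $X_{L(H)}$ as the Frobenius characteristic of a symmetric-group module for general multigraphs $H$, so Schur positivity does not come ``for free'' there; (iii) the inductive step via the Orellana--Scott triple-deletion relation expresses $X_{G_{12}}$ as a signed combination of other chromatic symmetric functions, and signed combinations of Schur positive functions need not be Schur positive, so nothing is ``preserved at each stage.'' You also correctly note that deletion and the Chudnovsky--Seymour decomposition leave the claw-free class, which by itself defeats the induction as stated.

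For what it is worth, the closing suggestion of your sketch --- test the rim hook tabloid formula on tractable families where the coefficients can be computed by hand --- is essentially what this paper actually does: it applies the authors' combinatorial formula for $[s_\lambda]X_G$ (together with the balanced-bipartition necessary condition and the triple-deletion relation) to brooms, double brooms, and spiders with three legs, obtaining positivity classifications and further conjectures rather than any progress on the general claw-free conjecture. If you want to contribute in this direction, that is the realistic scope: pick a specific family, identify the partitions $\lambda$ for which $[s_\lambda]X_G$ could go negative, and control those coefficients explicitly.
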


\citet[Proposition 1.5]{Sta98} proved that
the set of types of stable partitions of any Schur positive $n$-vertex graph 
is an order ideal of the poset of integer partitions of $n$ with respect to the dominance order.
The authors \cite{WW20} gave a combinatorial formula for the Schur coefficients of chromatic symmetric functions.
Graphs that are shown to be Schur positive include
tadpole graphs,
the graphs obtained from two cycles $C$ and $C'$
by adding a path linking a vertex on $C$
and a vertex on $C'$,
claw-free incomparability graphs, 
edge 2-colorable hyperforests,
the incomparability graph of the natural unit interval order;
see \cite{Gas96P, Gas99, SW16, LLWY20}.
Graphs that are proved not to be Schur positive include
connected unbalanced bipartite graphs and 
the complete bipartite graphs $K_{m,n}$ with $m,n\ge 3$,
see \cite{WW20} for more graphs that are not Schur positive.
\citet{Kal15} confirmed the positivity of the $s_\lambda$ coefficients when $\lambda$ is of a hook shape.

In this paper, we concentrate on the chromatic symmetric functions of trees.
This is not only for the simplicity of trees as a particular graph class,
but also for the following major conjecture in this field,
which is called \emph{Stanley’s isomorphism conjecture} by \citet{LS19}
and the \emph{tree isomorphism conjecture} by \citet{CS20}.
\begin{conjecture}\label{conj:distinguishability}
The chromatic symmetric function distinguishes trees.
\end{conjecture}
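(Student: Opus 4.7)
The statement is Stanley's \emph{tree isomorphism conjecture}, open since 1995; what follows is therefore a research outline rather than a genuine proof sketch. The plan is to establish the conjecture by induction on the number of vertices, reconstructing each tree $T$ from $X_T$ up to isomorphism.

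First I would catalogue the numerical invariants of $T$ already known to be encoded in $X_T$: the number of vertices and edges, the degree sequence (via the $p$-expansion), the number of leaves, bipartiteness, and the counts of induced paths and of small induced subtrees. Subclasses of trees in which these invariants are already known to suffice (caterpillars, spiders, and the broom and most double-broom graphs treated in this paper) would serve as the base of the induction.

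Second I would attempt to identify a canonical substructure of $T$ whose removal yields a strictly smaller tree and whose own chromatic symmetric function can be read off $X_T$. The most natural candidates are a ``heaviest'' leaf-branch at the centroid or a longest leaf-to-leaf path. The triple-deletion relation of Orellana and Scott, which drives the $e$-coefficient computations in this paper, provides one linear identity among $X_T$ and the chromatic symmetric functions of three related smaller graphs; iterating such relations, together with the combinatorial Schur-coefficient formula via special rim hook tabloids from \cite{WW20}, would be the tool of choice for localising to a branch at a specified vertex.

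The hard part will be precisely this localisation step. All current techniques, including triple-deletion and the coefficient formulas in the $e$-, $p$-, and $s$-bases, yield \emph{global} linear relations among chromatic symmetric functions of smaller graphs, but none of them localise to a designated vertex or rooted subtree of $T$. Without a new recursion, or a new invariant of $X_T$ sensitive to rooted branch structure at an identifiable vertex, the induction step cannot be closed; this is the essential reason the conjecture remains open, and I do not see a route past it with the tools presently in view.
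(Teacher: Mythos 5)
The statement you were asked to prove is \cref{conj:distinguishability}, which the paper itself presents as an open conjecture --- Stanley's tree isomorphism conjecture --- and does not prove or claim to prove; it only surveys partial progress in the cited literature. So there is no proof in the paper against which your attempt can be measured, and you correctly recognized this: your submission is explicitly a research outline, not a proof.

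Judged as a proof attempt, it has the gap you yourself flag, and that gap is the entire content of the problem. The inductive scheme (reconstruct a canonical branch at the centroid, peel it off, recurse) founders exactly where you say it does: no known identity, including the triple-deletion relation of Orellana and Scott used throughout this paper and the special-rim-hook formula for Schur coefficients, lets you read off from $X_T$ the chromatic symmetric function of a rooted subtree attached at an \emph{identified} vertex; these tools give global linear relations among unrooted graphs only. The base cases you list (spiders, brooms, caterpillars) are indeed known to be distinguished, but they do not feed any induction without the localisation step. In short, your write-up is an accurate assessment of the state of the art and an honest admission that you have no proof; it should not be counted as establishing the statement, and the paper does not establish it either.
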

In fact, \cref{conj:distinguishability}
was inspired by \citet[Page 170]{Sta95}'s remark
``We do not know whether~$X_G$ distinguishes trees''. 
See \cite{HJ19,Mor05M,Fou03B,MMW08,AZ14,FKKMMT20,Gol78,Tsu18,OS14}
for its research progress.

The problem of determining whether a given tree is $e$-positive and whether it is Schur positive also received attention.
\citet{DSv20} conjectured the existence of an $n$-vertex Schur positive tree
of maximum degree $\floor{n/2}$,
which is disproved by \citet{RS20} with a counterexample.
They \cite{DSv20} also proved 
that any $n$-vertex $e$-positive tree has degree at most 
$\log_2{n}$, and further conjectured the maximum degree of any $e$-positive tree to be 3.
\begin{conjecture}[\citeauthor{DSv20}]\label{conj:tree4:ne}
Any tree with a vertex of degree at least $4$ is not $e$-positive.
\end{conjecture}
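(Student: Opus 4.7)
Let $T$ be a tree with a vertex $v$ of degree $d\ge 4$, let $T_1,\ldots,T_d$ be the connected components of $T-v$, let $r_i$ be the neighbor of $v$ in $T_i$, and write $n_i=|V(T_i)|$ and $n=1+\sum_i n_i$. The strategy is to exhibit, for each such $T$, a structural obstruction to $e$-positivity, combining Wolfgang's connected-partition criterion with the Orellana--Scott triple-deletion property.

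Every connected partition $\pi$ of $T$ is determined by the block $B_v$ containing $v$, which has the form $B_v=\{v\}\cup\bigcup_i S_i$ with $S_i$ a (possibly empty) subtree of $T_i$ rooted at $r_i$; the remaining blocks are then obtained by taking a connected partition of each forest $T_i\setminus S_i$ separately. In particular, every block other than $B_v$ lies inside a single branch, and the multiset of block sizes of $\pi$ is forced to equal $\{1+\sum_i|S_i|\}\sqcup\bigsqcup_i\mathrm{sizes}(\pi|_{T_i\setminus S_i})$. This is the combinatorial source of Wolfgang obstructions. The base case $T=K_{1,d}$ is immediate from it: every non-$B_v$ block is a singleton, so the type $(2,2,1^{d-3})$ is not realized, and Wolfgang's criterion forces $X_{K_{1,d}}$ to be non-$e$-positive.

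For a general $T$, I would look for a missing partition type $\mu$ of $n$ tailored to the multiset $(n_1,\ldots,n_d)$; the strategy here is case-dependent. When the branches are short, types with many parts of size $2$ are ruled out by a matching-number bound (the maximum matching of a spider with $v$ of high degree and short legs is too small to accommodate the required number of size-$2$ blocks); when some branches are long, a type of the form $(a,b,1^*)$ with $a,b\ge 3$ becomes obstructed because enlarging $B_v$ to reach size $a$ typically leaves the opposite $d-1$ branches too fragmented to assemble an independent connected block of size $b$. The plan is a finite case analysis, parametrized by the profile $(n_1,\ldots,n_d)$, showing that at least one type is always missing whenever $d\ge 4$.

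If some tree with $d\ge 4$ nevertheless satisfies Wolfgang (every type realized), the backup plan is to extract an explicit $e$-coefficient via Orellana--Scott's triple deletion applied at an induced claw $K_{1,3}\subset T$ centered at $v$, using any three of its at-least-four neighbors. Seeded by $[e_{2,2}]X_{\text{claw}}=-2$, the recurrence expresses $X_T$ as a signed sum of $X_{T'}$ for smaller or differently-shaped $T'$, and the aim is to track a well-chosen coefficient $[e_\lambda]$ by induction on $n$. The principal obstacle is precisely this step: the triple deletion introduces non-tree graphs, so the induction must be carried out in a broader class, and identifying both the coefficient to track and an auxiliary inequality that propagates its negativity through the recurrence is the true crux of the problem---arguably the reason the conjecture remains open.
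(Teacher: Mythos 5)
First, be clear about what you were asked to prove: \cref{conj:tree4:ne} is recorded in the paper as an \emph{open conjecture} of \citet{DSv20}; the paper contains no proof of it, and the strongest partial result it cites is \citet{Zhe20X}'s theorem that a vertex of degree at least $6$ forces non-$e$-positivity. Your submission is a research plan rather than a proof, and both of its branches have genuine gaps. The first branch rests on the claim that every tree with a vertex of degree $d\ge 4$ misses some connected-partition type, so that \cref{thm:Wolfgang} applies. You verify this only for $K_{1,d}$ and, implicitly, for centers adjacent to several leaves (the matching bound); for the general case you promise a ``finite case analysis, parametrized by the profile $(n_1,\dots,n_d)$,'' which is not finite (the $n_i$ are unbounded), and whose conclusion is a claim at least as strong as the conjecture itself, so nothing has been reduced. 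Moreover the evidence points against this route sufficing: connected-partition obstructions are exactly what produce the logarithmic degree bound in \cref{thm:lambda1:d:n} and in \citet{DSv20}, and for suitable spiders with four legs, one of them long, the specific families of types you propose to obstruct (many parts equal to $2$, or shapes $(a,b,1^{*})$) are in fact realized; this is precisely why \citet{Zhe20X} had to abandon Wolfgang-type arguments and extract explicit $e$-coefficients to reach degree $6$, and why degrees $4$ and $5$ remain open.

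The second branch --- the backup via \cref{thm:rec:3del} --- is where the actual difficulty lives, and you leave it untouched: no target coefficient $[e_\lambda]$ is specified for a general tree, no induction hypothesis is formulated for the non-tree graphs produced by the deletion, and no mechanism is given for propagating the negativity of the tracked coefficient through a signed recurrence. You concede this yourself (``the true crux of the problem''). So the proposal assembles the right tools --- the same ones this paper uses for special families such as $S(a,b,1)$, $S(a,b,2)$ and brooms --- but it does not constitute a proof of the statement, and no proof exists in the paper to compare it against, since the statement is a conjecture.
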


\Citet{Zhe20X} obtained a breakthrough towards \cref{conj:tree4:ne} by proving
that any tree with a vertex of degree at least $6$ is not $e$-positive. 

A particular class of trees, the spiders, 
plays an essential role in the study of $e$-positivity of graphs.
A \emph{spider} is a tree consisting of some paths 
with one endpoint on each path identified. Precisely speaking,
for any partition 
\[
\lambda=\lambda_1\dotsm\lambda_d\vdash n-1
\]
with $d\ge 3$,
the spider $S(\lambda)$ is the $n$-vertex tree consisting 
of the paths $P_{1+\lambda_1}$, $\dots$, $P_{1+\lambda_d}$ such that
all of them share a common endpoint of degree $d$.
\citet[Lemma 13]{DSv20} showed that 
if a connected graph $G$ has a connected partition of type~$\mu$,
then the spider $S(\lambda)$ 
has a connected partition of type~$\mu$,
where $\lambda$ is the partition consisting 
of the sizes of connected components that are obtained by removing a vertex of degree at least~3 from~$G$.
Therefore, the $e$-positivity of a general graph implies
the $e$-positivity of certain spider
in view of \citeauthor{Wol97D}'s criterion.

This paper is organized as follows. 
In \cref{sec:preliminary} we give an overview for necessary notion and notation,
as well as known results in the study of graph positivities that will be of use in 
the subsequent sections.
\Cref{sec:spider:ab1,sec:spider:ab2} are devoted to the positivity of spiders $S(a,b,1)$ and $S(a,b,2)$, respectively.
We obtain some bounds of $a$ in terms of $b$ for the $e$-positivity of 
these spiders, and conjecture the Schur positivity of these spiders.
In \cref{sec:broom}, 
we obtain the positivity classification of all broom graphs and the positivity classification of most double broom graphs.
We end this paper with a conjecture that completes the 
positivity classification of double broom graphs, see \cref{conj:Schur:SPS:22}.

The chromatic symmetric functions of explicit graphs on a small number of vertices
in this paper are computed by using Russell's program~\cite{Rus19W}.

\section{Preliminaries}\label{sec:preliminary}
Let $n$ be a positive integer.
A \emph{composition} $\kappa$ of $n$ 
is a sequence $(\kappa_1,\dotsc,\kappa_\ell)$ of integers that sum to~$n$.
We write 
\[
\kappa!=\prod_{i\ge1}\kappa_i!
\quad\text{and}\quad
\kappa^!=\prod_{i\ge 1}k_i!,
\]
where $k_i$ is the number of occurrences of the part $i$ in $\kappa$.
An \emph{integer partition} $\lambda$ of~$n$ is a composition 
$(\lambda_1,\dots,\lambda_{\ell})$ of $n$
in non-increasing order, denoted 
$\lambda\vdash n$.
It can be recast as $1^{a_1}2^{a_2}\cdots$,
where $a_i$ is the multiplicity of~$i$ in~$\lambda$.

Let $G=(V,E)$ be a graph with vertex set $V$ and edge set $E$.
The \emph{order} of~$G$ is the number $\abs{V}$ of vertices.
A \emph{partition} of~$G$ is a set partition $\rho=V_1/\dotsb/V_\ell$
of $V$. 
It is said to be a \emph{bipartition} if $\ell=2$.
A bipartition $V_1/V_2$ is \emph{balanced} if $\abs{V_1}-\abs{V_2}\in\{-1,0,1\}$.
We call the sets $V_i$ \emph{blocks} of~$\rho$.
We say that a partition $\rho$ is \emph{semi-ordered}
if for any number $m$, the blocks of order~$m$ in~$\rho$ are ordered.
A block in $\rho$ is \emph{stable} if any two vertices in the block are not adjacent by an edge.
A partition~$\rho$ is \emph{stable} if its every block is stable.
The \emph{type} of $\rho$ is the integer partition consisting of 
the block cardinalities, denoted~$\tau_\rho$.
For any composition~$\kappa$ obtained by rearranging the parts of $\tau_\rho$,
without confusion, one may say that $\rho$ is of type $\kappa$.

For any partition $\lambda=(\lambda_1,\dots,\lambda_\ell)$,
the \emph{monomial symmetric function}~$m_\lambda$ is defined by
\[
m_\lambda=\sum_{\alpha}x^\alpha,
\]
where $\alpha$ runs over all distinct permutations of $\lambda$;
the \emph{augmented monomial symmetric function} $\tilde m_{\lambda}$ is defined by $\tilde m_{\lambda}=\lambda^!m_{\lambda}$; the \emph{elementary symmetric function} $e_\lambda$ is defined to be
\[
e_\lambda=m_{1^{\lambda_1}}\dotsm m_{1^{\lambda_\ell}};
\]
the \emph{Schur function} $s_{\lambda}$ is defined by
\[
s_{\lambda}=\sum_T x^T,
\]
where $T$ ranges over all semistandard Young tableaux of shape $\lambda$,
and $x^T$ is the monomial $x_1^{i_1}x_2^{i_2}\dotsm$ such that $T$ contains exactly $i_j$ cells with entry $j$ for all $j$.

\Citet[Propositions~2.3, 2.4, 5.3 and Theorem 2.5]{Sta95}
gave some basic properties of chromatic symmetric functions.

\begin{proposition}[\citeauthor{Sta95}]\label{prop:csf:disjoint}
$X_{G\sqcup H}=X_G X_H$, where $G\sqcup H$ is the disjoint union of the graphs $G$ and~$H$. 
\end{proposition}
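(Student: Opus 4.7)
The plan is to unpack the definition of the chromatic symmetric function and exploit the fact that in a disjoint union no constraint links the two halves. Writing $V=V(G)\sqcup V(H)$ and $E=E(G)\sqcup E(H)$, a coloring $\kappa\colon V\to\{1,2,\dots\}$ decomposes uniquely as the pair $(\kappa|_{V(G)},\kappa|_{V(H)})$, and conversely any such pair assembles into a coloring of $G\sqcup H$. So the first step is to establish the bijection between proper colorings of $G\sqcup H$ and pairs $(\kappa_1,\kappa_2)$ of proper colorings of $G$ and $H$ respectively.

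For the bijection, the key observation is that properness is a condition on edges only: $\kappa$ is proper for $G\sqcup H$ iff $\kappa(u)\neq\kappa(v)$ for every edge $uv$ of $G\sqcup H$; since every such edge lies entirely inside $V(G)$ or entirely inside $V(H)$, this holds iff $\kappa|_{V(G)}$ is proper for $G$ and $\kappa|_{V(H)}$ is proper for $H$. No edge joins a vertex of $G$ to a vertex of $H$, so there is no interaction.

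The second step is the algebraic manipulation. Starting from
\[
X_{G\sqcup H}=\sum_{\kappa}\prod_{v\in V}\mathbf{x}_{\kappa(v)},
\]
one splits the product over $V$ into the product of the product over $V(G)$ and the product over $V(H)$, and uses the bijection above to rewrite the single sum over proper colorings of $G\sqcup H$ as a double sum over independent pairs $(\kappa_1,\kappa_2)$. The double sum then factors as
\[
\left(\sum_{\kappa_1}\prod_{v\in V(G)}\mathbf{x}_{\kappa_1(v)}\right)\left(\sum_{\kappa_2}\prod_{v\in V(H)}\mathbf{x}_{\kappa_2(v)}\right)=X_G\cdot X_H,
\]
which is exactly the asserted identity.

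There is really no hard step here; this is a routine consequence of the definition, and the only thing to be careful about is the tacit identification of $V(G\sqcup H)$ with the disjoint union $V(G)\sqcup V(H)$ so that $\kappa$ restricts cleanly to each side. Once that is in place, the multiplicativity is immediate from the fact that $\prod_{v\in V}\mathbf{x}_{\kappa(v)}$ is itself multiplicative in $V$.
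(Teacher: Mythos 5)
Your argument is correct and is exactly the standard proof: the paper itself states this proposition without proof, citing Stanley, and the cited proof is precisely this bijection between proper colorings of $G\sqcup H$ and pairs of proper colorings of $G$ and $H$, together with the factorization of the monomial $\prod_{v}\mathbf{x}_{\kappa(v)}$. Nothing is missing.
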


\begin{proposition}[\citeauthor{Sta95}]\label{prop:csf}
The chromatic symmetric function $X_G$ of a graph $G=(V,E)$ can be computed by
\[
X_G
=\sum_{\lambda\vdash \abs{V(G)}}a_\lambda\tilde m_{\lambda}
=\sum_{E'\subseteq E}(-1)^{\abs{E'}}p_{\lambda(E')}
\]
where $a_\lambda$ is the number of stable partitions of $G$ of type $\lambda$,
and $\lambda(E')$ 
is the integer partition consisting of
the component orders of the spanning subgraph~$(V,E')$.
\end{proposition}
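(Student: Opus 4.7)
The plan is to prove the two identities separately by direct combinatorial arguments, both of which are essentially repackagings of the defining sum $X_G=\sum_\kappa \prod_v x_{\kappa(v)}$. For the first equality, the strategy is to group the proper colorings $\kappa$ by the set partition of $V(G)$ formed by their nonempty color classes. Since $\kappa$ is proper, each color class is a stable set, so this partition is stable; conversely, specifying a proper coloring is equivalent to specifying a stable partition $\rho$ with blocks $V_1,\dots,V_\ell$ together with an injection assigning distinct positive integers $c_1,\dots,c_\ell$ to the blocks. Holding $\rho$ fixed and summing $x_{c_1}^{|V_1|}\cdots x_{c_\ell}^{|V_\ell|}$ over all such injections yields a quantity that depends only on the type $\tau_\rho=\lambda$, and the key combinatorial check is that this sum equals $\tilde m_\lambda$: each monomial of $m_\lambda$ is produced exactly $\lambda^!$ times, because the number of permutations of the exponent sequence $(\lambda_1,\dots,\lambda_\ell)$ preserving it as a multiset is $\prod_i a_i!=\lambda^!$, where $a_i$ is the multiplicity of $i$ in $\lambda$. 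Collecting stable partitions by type then produces $X_G=\sum_\lambda a_\lambda \tilde m_\lambda$.

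For the second equality, I would run inclusion-exclusion on the set of ``bad'' (monochromatic) edges. For each $E'\subseteq E$, let $f_{E'}$ be the sum of $\prod_v x_{\kappa(v)}$ over all colorings $\kappa$ (not required to be proper) in which every edge of $E'$ is monochromatic. Vertices lying in the same connected component of the spanning subgraph $(V,E')$ are then forced to share a color, while distinct components may be colored independently, so choosing one color per component gives $f_{E'}=p_{\lambda(E')}$ at once. A coloring is proper precisely when no edge is monochromatic, so standard inclusion-exclusion on $E$ gives
\[
X_G=\sum_{E'\subseteq E}(-1)^{|E'|}f_{E'}=\sum_{E'\subseteq E}(-1)^{|E'|}p_{\lambda(E')},
\]
as claimed.

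The whole argument is really combinatorial bookkeeping, and I do not expect any serious obstacle. The one point requiring care is the $\lambda^!$ overcount in the first equality: the natural sum produced by ordered injections is not $m_\lambda$ but $\tilde m_\lambda$, and failing to track this factor would leave the identity off by a rescaling. The second equality is entirely routine once one notes that imposing a monochromatic constraint on $E'$ merges each connected component of $(V,E')$ into a single ``super-vertex'' that receives a single color.
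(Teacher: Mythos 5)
Your proof is correct. The paper itself offers no proof of this proposition—it is quoted from Stanley's 1995 paper—and your two arguments (grouping proper colorings by the stable partition of color classes to get the $\tilde m_\lambda$-expansion, with the $\lambda^!$ factor accounting for assignments of equal-sized blocks to colors, and Whitney-style inclusion--exclusion over monochromatic edge sets for the power-sum expansion) are exactly the standard ones given in the cited source.
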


\begin{proposition}[\citeauthor{Sta95}]\label{prop:gf:path:cycle}
The chromatic symmetric functions $X_{P_n}$ for the $n$-vertex paths $P_n$ satisfy
\[
\sum_{n\geq 0}X_{P_n}z^n
=\frac{E(z)}{F(z)}
=1+e_1z+2e_2z^2+(3e_3+e_{21})z^3+\cdots,
\]
where $E(z)=\sum_{n\ge0} e_n z^n$ and $F(z)=E(z)-zE'(z)$.
\end{proposition}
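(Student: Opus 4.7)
The plan is to specialize the power-sum formula of \Cref{prop:csf} to $G=P_n$, sum the resulting expressions into a formal power series in $z$, and then recognize this series as $E(z)/F(z)$ via classical identities in the algebra of symmetric functions. First I would observe that, labelling the edges of $P_n$ as $e_1,\dotsc,e_{n-1}$, reading off the component sizes of the spanning subgraph $(V,E')$ from left to right gives a bijection between subsets $E'\subseteq E(P_n)$ and compositions $c=(c_1,\dotsc,c_m)$ of $n$, under which $|E'|=n-m$ and $p_{\lambda(E')}=p_c$. Hence \Cref{prop:csf} specializes to
\[
X_{P_n}=\sum_{c\vDash n}(-1)^{n-\ell(c)}p_c,
\]
where the sum ranges over all compositions $c$ of $n$ and $\ell(c)$ denotes the number of parts.

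Next, multiplying by $z^n$ and summing over $n$, I would exploit the factorization $(-1)^{n-\ell(c)}z^n=\prod_{i=1}^{\ell(c)}(-1)^{c_i-1}z^{c_i}$ to collapse the double sum into a geometric-type series:
\[
\sum_{n\geq 0}X_{P_n}z^n=\sum_{m\geq 0}\Biggl(\sum_{k\geq 1}(-1)^{k-1}z^k p_k\Biggr)^{\!m}=\frac{1}{1+\sum_{k\geq 1}(-z)^k p_k}.
\]

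The remainder of the argument is purely algebraic. From the classical identity $\log H(z)=\sum_{k\geq 1}p_k z^k/k$, where $H(z)=\sum_{n\geq 0}h_n z^n$, differentiation yields $\sum_{k\geq 1}p_k z^k=zH'(z)/H(z)$. Substituting $z\mapsto -z$ and invoking $H(-z)E(z)=1$ (so that $H(-z)=1/E(z)$ and $H'(-z)=E'(z)/E(z)^2$) gives
\[
\sum_{k\geq 1}(-z)^k p_k=-\frac{zE'(z)}{E(z)},
\]
whence the denominator above equals $(E(z)-zE'(z))/E(z)=F(z)/E(z)$, which yields the claim.

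The main obstacle I anticipate is the careful sign bookkeeping in the first two steps: one must verify the left-to-right bijection between edge subsets and compositions, and then check that $(-1)^{n-\ell(c)}$ distributes across $c$ as $\prod_i(-1)^{c_i-1}$ in precisely the way needed to collapse the sum over compositions into a geometric series. Once this is in place, the reduction to $E(z)/F(z)$ uses only standard formal identities among $\{p_k\}$, $\{h_n\}$ and $\{e_n\}$, with no further input from the graph $P_n$.
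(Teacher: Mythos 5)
Your argument is correct: the bijection between edge subsets of $P_n$ and compositions of $n$ (with $|E'|=n-\ell(c)$), the collapse of the sum over compositions into a geometric series, and the identity $\sum_{k\ge1}p_k(-z)^k=-zE'(z)/E(z)$ obtained from $\log H(z)=\sum_{k\ge1}p_kz^k/k$ and $H(-z)E(z)=1$ all check out, and together they give $\sum_{n\ge0}X_{P_n}z^n=E(z)/F(z)$. The paper itself gives no proof of this proposition—it is quoted from \citet[Proposition 5.3]{Sta95}—and your derivation is essentially the standard one, specializing the power-sum expansion of \cref{prop:csf} to paths, so there is nothing to correct.
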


\citet[Proposition 1.3.3]{Wol97D} derived a powerful criterion for the $e$-positivity of a graph.
\begin{theorem}[\citeauthor{Wol97D}]\label{thm:Wolfgang}
Any $e$-positive graph contains a connected partition of any type.
\end{theorem}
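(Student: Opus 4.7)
I would prove the contrapositive: if a connected graph $G$ admits no connected partition of some type $\mu\vdash n=|V(G)|$, then some $e$-coefficient of $X_G$ is negative.

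The key is to derive a formula for $c_\nu:=[e_\nu]X_G$ indexed by connected partitions of $G$. Starting from the power-sum expansion of \cref{prop:csf},
\[
X_G=\sum_{E'\subseteq E(G)}(-1)^{|E'|}\,p_{\lambda(E')},
\]
I would regroup the sum by the vertex partition $\pi(E')$ whose blocks are the vertex sets of the connected components of the spanning subgraph $(V(G),E')$. Every such $\pi$ is automatically a connected partition of $G$, and the inner sum factors across blocks, yielding
\[
X_G=\sum_{\pi\text{ connected partition of }G}\Bigl(\prod_{V_i\in\pi}\varsigma(G[V_i])\Bigr)\,p_{\tau_\pi},
\]
where $\varsigma(H):=\sum_{F\subseteq E(H),\,(V(H),F)\text{ connected}}(-1)^{|F|}$ carries sign $(-1)^{|V(H)|-1}$ for connected $H$. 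Converting each $p_{\tau_\pi}$ to the elementary basis via the Newton identity $p_k=\sum_{\beta\vdash k}(-1)^{k-\ell(\beta)}\frac{k(\ell(\beta)-1)!}{\prod_i m_i(\beta)!}\,e_\beta$ and multiplying across parts, a careful sign bookkeeping shows that $c_\mu$ receives contributions only from $\pi$ with $\tau_\pi$ a coarsening of $\mu$. Each such contribution carries sign $(-1)^{\ell(\mu)-\ell(\tau_\pi)}$ times a strictly positive magnitude, and the diagonal case $\tau_\pi=\mu$ always contributes positively.

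The hardest step will be to turn the hypothesis ``no $\pi$ has $\tau_\pi=\mu$'' into negativity of \emph{some} $e$-coefficient of $X_G$. Removing the diagonal positive term leaves only contributions from strictly coarser types $\nu\succ\mu$, with alternating signs $(-1)^{\ell(\mu)-\ell(\nu)}$ that could in principle cancel. My plan is to apply M\"{o}bius inversion on the partition lattice: choose $\mu$ maximal (coarsest) among missing types, so that every strictly coarser $\nu\succ\mu$ still admits a connected partition, and exploit the resulting sign alternation together with the explicit positive magnitudes to force strict negativity of $c_\mu$ or of some related $c_\nu$. Identifying the critical coefficient and executing the sign-domination argument is the crux of the proof; it may be cleaner to bypass Newton's identities and instead combine the augmented monomial expansion $X_G=\sum_\nu a_\nu\tilde m_\nu$ from \cref{prop:csf} with the explicit transition matrix between $\tilde m_\nu$ and $e_\lambda$, whose entries count $0/1$-matrices with prescribed margins, to extract the same contradiction.
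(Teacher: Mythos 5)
Your setup is sound as far as it goes: grouping Stanley's power-sum expansion (\cref{prop:csf}) by the connected partition $\pi(E')$, the sign fact that $(-1)^{|V(H)|-1}\varsigma(H)>0$ for connected $H$, the observation that $e_\mu$ occurs in $p_{\tau_\pi}$ only when $\mu$ refines $\tau_\pi$, and the sign $(-1)^{\ell(\mu)-\ell(\tau_\pi)}$ of each contribution are all correct. But the step you defer is precisely the proof, and the plan you sketch for it does not go through. After the diagonal term is removed, the contributions to $[e_\mu]X_G$ from connected partitions of strictly coarser types alternate in sign, and their magnitudes (products of the unknown positive integers $|\varsigma(G[V_i])|$ with Newton coefficients) are not under your control; choosing $\mu$ coarsest among the missing types gives no domination, and nothing in your argument forces $[e_\mu]X_G<0$ or identifies a ``related'' negative coefficient. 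Note also that the theorem needs $G$ connected (as stated in the introduction of the paper and in Wolfgang's original): $X_{2K_1}=e_{11}$ is $e$-positive although $2K_1$ has no connected partition of type $(2)$, so the contrapositive you set out to prove is false as stated, and any correct argument must invoke connectedness somewhere, which yours never does. (The paper itself offers no proof to compare with: it quotes the result from Wolfgang.)

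The efficient way to finish from your own identity is to dualize instead of hunting for a negative $e$-coefficient. Your grouped expansion says exactly that $(-1)^{n-\ell(\mu)}[p_\mu]X_G=\sum_{\pi\colon\tau_\pi=\mu}d_\pi$ with every $d_\pi>0$, a quantity that is strictly positive if and only if $G$ has a connected partition of type $\mu$. On the other hand, from $e_m=\sum_{\nu\vdash m}(-1)^{m-\ell(\nu)}z_\nu^{-1}p_\nu$ one sees that $(-1)^{n-\ell(\mu)}[p_\mu]e_\lambda\ge 0$ for all $\lambda\vdash n$, with strict inequality precisely when $\mu$ refines $\lambda$; in particular $[p_\mu]e_{(n)}$ has the right sign and is nonzero for every $\mu$. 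Taking $\mu=(n)$ and using that $\{V\}$ is a connected partition of the connected graph $G$ forces $[e_n]X_G>0$; then for arbitrary $\mu$, $e$-positivity gives $(-1)^{n-\ell(\mu)}[p_\mu]X_G\ge [e_n]X_G\cdot(-1)^{n-\ell(\mu)}[p_\mu]e_{(n)}>0$, so a connected partition of type $\mu$ exists. No M\"obius inversion or sign-domination is needed; the single missing idea is to read off the $p_\mu$-coefficient, whose contributions all share one sign, rather than an $e$-coefficient, whose contributions do not.
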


For any basis $\{b_\lambda\}$ of the algebra $\Lambda(x_1,x_2,\dots)$
and any symmetric function $F\in \Lambda(x_1,x_2,\dots)$,
we use the notation $[b_\lambda]F$ to denote the coefficient of $b_\lambda$
in the $b$-expansion of $F$.
By \cref{prop:gf:path:cycle},
\citet[Theorem 3.2]{Wol98} exhibited explicit formulas 
for the coefficients of $e_\lambda$ of paths.

\begin{proposition}[\citeauthor{Wol98}]\label{prop:Wolfe}
Let $\lambda=1^{a_1}2^{a_2}\dotsm d^{a_d}\vdash d$. Then 
\[
[e_\lambda]X_{P_d}
=\binom{\ell}{a_1,\dots,a_d}\prod_{a_j\ge 1}(j-1)^{a_j}
+\sum_{a_i\ge 1}\binom{\ell-1}{a_1,\dots,a_i-1,\dots,a_d}(i-1)^{a_i-1}\prod_{a_j\ge 1\atop{j\ne i,\,j\ne2}}(j-1)^{a_j},
\]
where $\ell=a_1+\dots+a_d$ is the length of $\lambda$.
\end{proposition}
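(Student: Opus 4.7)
My plan is to prove the formula by a direct coefficient extraction from the generating function identity in \cref{prop:gf:path:cycle}. Using $zE'(z) = \sum_{n \ge 1} n e_n z^n$, one first rewrites the denominator as
\[
F(z) = 1 - g(z), \qquad g(z) := \sum_{m \ge 2}(m - 1)\, e_m\, z^m,
\]
so that the geometric series gives
\[
\sum_{d \ge 0} X_{P_d}\, z^d = E(z) \sum_{k \ge 0} g(z)^k.
\]

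Expanding each copy of $g(z)$ and extracting the coefficient of $z^d$ yields
\[
X_{P_d} = \sum_{\substack{n \ge 0,\; k \ge 0 \\ m_1, \dots, m_k \ge 2 \\ n + m_1 + \dots + m_k = d}} \biggl(\prod_{j=1}^{k}(m_j - 1)\biggr)\, e_n \prod_{j=1}^{k} e_{m_j}.
\]
To read off $[e_\lambda] X_{P_d}$ for $\lambda = 1^{a_1} \dotsm d^{a_d}$ of length $\ell$, I would count all tuples $(n; m_1, \dots, m_k)$ whose underlying multiset (with the entry $n$ suppressed when $n = 0$) equals $\lambda$, weighted by $\prod_{j}(m_j - 1)$.

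I would then split the count according to whether $n$ is zero or positive. In the case $n = 0$, the tuple $(m_1, \dots, m_\ell)$ is an ordering of the parts of $\lambda$; there are $\binom{\ell}{a_1, \dots, a_d}$ such orderings, each carrying the common weight $\prod_{a_j \ge 1}(j - 1)^{a_j}$, which reproduces the first summand (the factor $(1 - 1)^{a_1}$ automatically killing this term whenever $\lambda$ has a part of size $1$). In the case $n = i \ge 1$, the integer $i$ must be a part of $\lambda$ (so $a_i \ge 1$) and the remaining parts of $\lambda$ admit $\binom{\ell - 1}{a_1, \dots, a_i - 1, \dots, a_d}$ orderings with common weight $(i - 1)^{a_i - 1} \prod_{j \ne i,\, a_j \ge 1}(j - 1)^{a_j}$; summing over the parts $i$ yields the second summand. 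The restriction $j \ne 2$ displayed there is cosmetic since $(2 - 1)^{a_2} = 1$.

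The whole argument is routine generating-function bookkeeping and contains no genuine obstacle. The only delicate point is the interplay between $(1 - 1)^{a_1}$ in the first summand and $(1 - 1)^{a_1 - 1}$ in the $i = 1$ term of the second summand: together they ensure that the two displayed terms pick out precisely the configurations realisable by the constraint $m_j \ge 2$, namely $a_1 = 0$ for Case~A and ``$\lambda$ with one copy of $i$ removed has no part of size $1$'' for Case~B.
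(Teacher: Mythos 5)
Your derivation is correct, and it follows exactly the route the paper indicates: the paper cites this as Wolfe's Theorem 3.2 obtained from the path generating function of \cref{prop:gf:path:cycle}, and your expansion of $E(z)/F(z)$ as $E(z)\sum_{k\ge0}g(z)^k$ with the case split $n=0$ versus $n=i\ge1$ (including the $0^{a_1}$ versus $0^{a_1-1}$ bookkeeping for parts equal to $1$) is precisely that standard argument.
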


\Citet[Theorem 3.1, Corollaries 3.2 and 3.3]{OS14} established the beautiful \emph{triple-deletion} property as follows.

\begin{theorem}[\citeauthor{OS14}]\label{thm:rec:3del}
Let $G$ be a graph with a stable set $\{u,v,w\}$.
Write $e_1=uv$, $e_2=vw$, and $e_3=wu$.
For any set $S\subseteq \{1,2,3\}$, 
denote by $G_S$ the graph with vertex set~$V(G)$
and edge set $E(G)\cup\{e_j\colon j\in S\}$.
Then 
\[
X_{G_{12}}=X_{G_1}+X_{G_{23}}-X_{G_3}
\quad\text{and}\quad
X_{G_{123}}=X_{G_{12}}+X_{G_{23}}-X_{G_2}.
\]
\end{theorem}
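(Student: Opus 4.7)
The plan is to reduce both identities to pointwise identities via Stanley's power-sum expansion (\cref{prop:csf}), and then verify them by observing that the relevant spanning-subgraph component multisets coincide. Since $\{u,v,w\}$ is stable in $G$, the edges $e_1, e_2, e_3$ lie outside $E(G)$, and so for each $S\subseteq\{1,2,3\}$ the edge set decomposes as $E(G_S)=E(G)\sqcup\{e_j\colon j\in S\}$. Applying \cref{prop:csf} and writing every spanning edge set as $E''\sqcup\{e_j\colon j\in T\}$ with $E''\subseteq E(G)$ and $T\subseteq S$, one obtains
\[
X_{G_S}
=\sum_{E''\subseteq E(G)}(-1)^{\abs{E''}}A_S(E''),
\qquad
A_S(E''):=\sum_{T\subseteq S}(-1)^{\abs{T}}p_{\lambda(E''\cup\{e_j\colon j\in T\})}.
\]
It suffices to prove each of the two claimed identities with $X_{G_\bullet}$ replaced by $A_\bullet(E'')$, for every fixed $E''\subseteq E(G)$.

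Expanding the $2^{\abs{S}}$ terms inside each $A_S(E'')$ and cancelling the common terms, the first identity collapses to the assertion $p_{\lambda(E''\cup\{e_1,e_2\})}=p_{\lambda(E''\cup\{e_2,e_3\})}$, and the second to $p_{\lambda(E''\cup\{e_1,e_3\})}=p_{\lambda(E''\cup\{e_1,e_2,e_3\})}$. Both reduce to statements purely about which component multiset arises in $(V, E''\cup T)$ for the four relevant choices of $T$. Thus the entire theorem is reduced to showing that, for every $E''$,
\[
\lambda(E''\cup\{e_1,e_2\})=\lambda(E''\cup\{e_2,e_3\})
\quad\text{and}\quad
\lambda(E''\cup\{e_1,e_3\})=\lambda(E''\cup\{e_1,e_2,e_3\}).
\]

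To verify these, I would do a short case analysis on which of the components $C_u,C_v,C_w$ of $(V,E'')$ containing $u,v,w$ coincide. When $C_u,C_v,C_w$ are pairwise distinct, each of $\{e_1,e_2\}$, $\{e_2,e_3\}$, $\{e_1,e_3\}$, and $\{e_1,e_2,e_3\}$ merges all three components into one and leaves all other components untouched, so all four multisets agree. If exactly two of the three coincide, then one of $e_1,e_2,e_3$ becomes an internal edge (adding no merging) while the other two still merge the remaining component into the shared one, and once again the partitions match. If all three coincide then no addition affects $\lambda$ at all. This exhausts every possibility.

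The proof is essentially a bookkeeping exercise once the reduction to the power-sum basis is in place; the only real step is recognising that the alternating sum over $T$ collapses so that only the top-degree contribution survives on each side. I do not expect any genuine obstacle — the case analysis is short, and no step requires anything beyond the basic observation that adding an edge inside an existing connected component leaves the component multiset unchanged.
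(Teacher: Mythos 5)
Your argument is correct. Note that the paper itself does not prove this statement: it is quoted from Orellana and Scott, whose original argument works directly with proper colorings, partitioning the colorings of the various graphs $G_S$ according to which of the pairs $uv$, $vw$, $wu$ receive equal colors and then matching the resulting classes on both sides of each identity. You instead route everything through the power-sum expansion of \cref{prop:csf}: since $\{u,v,w\}$ is stable, $E(G_S)=E(G)\sqcup\{e_j\colon j\in S\}$, so each $X_{G_S}$ splits over $E''\subseteq E(G)$ into the alternating sums $A_S(E'')$, and after cancellation the two identities reduce, for each fixed $E''$, to $\lambda(E''\cup\{e_1,e_2\})=\lambda(E''\cup\{e_2,e_3\})$ and $\lambda(E''\cup\{e_1,e_3\})=\lambda(E''\cup\{e_1,e_2,e_3\})$; these hold because each of the four added edge sets connects $u,v,w$, hence merges the components of $(V,E'')$ containing them into a single component and leaves every other component untouched (your case analysis on how many of $C_u,C_v,C_w$ coincide is correct, and in the second identity one can even just note that $e_2$ joins two vertices already connected through $u$). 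What your approach buys is a short, self-contained proof using only tools already stated in the paper; what the coloring-partition approach buys is a bijective explanation at the level of colorings, which is closer in spirit to how Orellana and Scott derive further consequences such as the $k$-deletion generalization.
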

\citet[Proposition 5]{Dv18} generalized this to $k$-cycles, called the \emph{$k$-deletion} property.
\Citet[Lemma 18 and Theorem 30]{DSv20} gave quick criteria
for the $e$-positivity of spiders.

\begin{theorem}[\citeauthor{DSv20}]\label{thm:lambda1:d:n}
Let $\lambda=(\lambda_1,\dotsc,\lambda_d)\vdash n-1$.
If the spider $S(\lambda)$ is $e$-positive,
then $\lambda_1\ge \floor{n/2}$ and $d<\log_2{n}+1$.
\end{theorem}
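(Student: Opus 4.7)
The strategy throughout is to apply Wolfgang's criterion (\cref{thm:Wolfgang}), which states that any $e$-positive graph has a connected partition of every type $\mu \vdash n$. Violating either claimed bound will mean exhibiting a type with no connected partition in $S(\lambda)$. For the first inequality $\lambda_1 \ge \floor{n/2}$, I would test the balanced bipartition type $\mu = (\ceil{n/2}, \floor{n/2})$: in any connected partition of $S(\lambda)$ into two blocks, one block contains the degree-$d$ center vertex $c$ and the other block $B$ lies in $S(\lambda) \setminus \{c\}$, which is the disjoint union of the legs $P_{\lambda_1}, \dots, P_{\lambda_d}$. Hence $B$ is contained in a single leg and $|B| \le \lambda_1$; since each block of $\mu$ has size at least $\floor{n/2}$, we conclude $\lambda_1 \ge \floor{n/2}$.

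For the second inequality $d < \log_2 n + 1$, I would first parametrise the connected partitions of $S(\lambda)$: every such partition is determined by integers $0 \le s_i \le \lambda_i$ (the center block being $\{c\} \cup \bigcup_i \{v_{i,1}, \dots, v_{i, s_i}\}$, of size $1 + \sum_i s_i$) together with a composition of each tail length $\lambda_i - s_i$ into positive parts. Accordingly, a type $\mu \vdash n$ is realizable by such a partition if and only if some part $\mu_k$ can serve as the center part while the remaining parts pack into $d$ bins of capacities $\lambda_1, \dots, \lambda_d$. I would then iterate the first-inequality argument by considering types with two equal middle-sized parts of approximate size $\lambda_1/2$; infeasibility of bin-packing would force $\lambda_2 \gtrsim \lambda_1/2$. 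Continuing the iteration yields $\lambda_k \gtrsim n/2^k$, and $\lambda_d \ge 1$ then gives $n \ge 2^{d-1}$, equivalently $d < \log_2 n + 1$.

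The main obstacle is the rigorous formulation of this iterated doubling. At each step, the obstruction type $\mu$ must simultaneously defeat every admissible choice of center part and every subsequent bin-packing of the remaining parts, which necessitates a careful case analysis. Furthermore, in the boundary case where $\lambda_1$ considerably exceeds $\floor{n/2}$, the \emph{sub-spider} $S(\lambda_2, \dots, \lambda_d)$ has only $n - \lambda_1 \le \ceil{n/2}$ vertices, and a direct induction on the number of legs (rather than on the number of vertices) may be cleaner; unifying these two regimes across the full range of $\lambda_1$ will constitute the bulk of the technical work.
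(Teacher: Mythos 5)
Your proof of the first inequality is correct and is the standard argument (the paper itself does not prove this theorem but imports it from \citet{DSv20}, whose proof likewise runs through Wolfgang's criterion, \cref{thm:Wolfgang}): for the balanced type $(\lceil n/2\rceil,\lfloor n/2\rfloor)$ the block avoiding the centre is a connected subgraph of the disjoint union of the legs, hence lies in one leg, giving $\lambda_1\ge\lfloor n/2\rfloor$.

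The second half, however, has a genuine gap: the intermediate inequalities you plan to iterate are false. You propose to force $\lambda_2\gtrsim\lambda_1/2$ and, in general, $\lambda_k\gtrsim n/2^k$; but the $e$-positive spiders $S(6,2,1)$ ($n=10$, $\lambda_2=2<n/4$) and $S(35,6,1)$ ($n=43$, $\lambda_2=6$, while $\lambda_1/2=17.5$ and $n/4\approx10.8$) already violate these bounds, so no obstruction type can establish them and the telescoping to $n\ge 2^{d-1}$ cannot be carried out as designed. What connected partitions actually force is a bound of $\lambda_k$ against the \emph{tail} $\lambda_{k+1}+\dots+\lambda_d$, not against $\lambda_1$ or $n$; and even for $k=2$ your ``two balanced middle parts'' type fails exactly in the regime you defer to ``technical work'': when $\lambda_1$ is large, both middle parts can be packed into the first leg with the long part playing the r\^ole of the central block, so no contradiction arises. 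A workable repair is to use, for each $k$, the type $\bigl((\lambda_k+1)^m, r\bigr)$ with $m=\lfloor n/(\lambda_k+1)\rfloor$ and $0\le r\le\lambda_k$: every non-central part of size $\lambda_k+1$ can only lie in one of the legs $1,\dots,k-1$, and at most one such part is central, so $(m-1)(\lambda_k+1)\le\lambda_1+\dots+\lambda_{k-1}$, which simplifies to $\lambda_k\ge\lambda_{k+1}+\dots+\lambda_d$ (for $k=1$ either $m\ge2$ and the same argument applies, or $n<2(\lambda_1+1)$ and the inequality is immediate). Telescoping these inequalities with $\lambda_d\ge1$ gives $n-1=\lambda_1+\dots+\lambda_d\ge 2^{d-1}$, i.e.\ $d<\log_2 n+1$, and the case $k=1$ recovers $\lambda_1\ge\lfloor n/2\rfloor$ as well. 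As written, your proposal establishes only the first inequality of the theorem.
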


Here are some contributions due to \citet[Theorem 3.4,Lemma 4.4,Theorem 5.3]{Zhe20X} 
to the $e$-positivity of spiders.

\begin{lemma}[\citeauthor{Zhe20X}]\label{lem:e:spider:mod}
Let $\lambda=(\lambda_1,\dotsc,\lambda_d)\vdash n-1$.
Let $m\in\mathbb{Z}^+$ and $R_m=1+\sum_{i=1}^d r_i$,
where~$r_i$ is the least nonnegative residue of $\lambda_i$ modulo $m$.
Suppose that $n=mq+r$, where $q,r\in\mathbb{Z}$ and $0\le r\le m-1$.
If the spider $S(\lambda)$ is $e$-positive, then we have the following.
\begin{enumerate}
\item
$R_m<2m$,
\item
if $R_m\ge m$, then $r_i\ge r$ for some $i\in[d]$.
\end{enumerate}
\end{lemma}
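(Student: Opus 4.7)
The plan is to invoke Wolfgang's criterion (\cref{thm:Wolfgang}) against the single test type
\[
\tau =
\begin{cases}
m^{q} & \text{if } r=0,\\
m^{q}\cdot r & \text{if } r\ge 1,
\end{cases}
\]
which is a valid integer partition of $n$ since $mq+r=n$. If $S(\lambda)$ is $e$-positive, it must admit a connected partition $\rho$ of type $\tau$, and the goal is to show that the existence of $\rho$ is equivalent to the two conclusions of the lemma.

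Next I would describe the possible shape of $\rho$. Every connected subgraph of $S(\lambda)$ is either a sub-path of a single leg or a ``star-shaped'' subgraph consisting of the central vertex together with the $c_i$ vertices nearest the center on each leg $i$, for some $0\le c_i\le \lambda_i$. Hence $\rho$ contains a unique central block of size $s=1+\sum_i c_i$, and every other block is a sub-path of one leg; the outer $\lambda_i-c_i$ vertices of leg $i$ are thus partitioned into sub-paths whose sizes belong to $\{m,r\}$ (or simply $\{m\}$ if $r=0$). Since $s$ is itself a block-size of $\tau$, we have $s\in\{m,r\}$.

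The core of the argument is a case split on $s$ together with modular bookkeeping via $R_m\equiv r\pmod m$. Writing $\lambda_i-c_i=a_i m+b_i r$ with $a_i,b_i\ge 0$, if $s=r$ then all $b_i=0$, giving $c_i\equiv r_i\pmod m$ with $\sum c_i\ge R_m-1$; combined with $\sum c_i=r-1$ and $0<R_m$, $r<m$, this pins $R_m=r$. If $s=m$, then exactly one leg $j$ carries the single $r$-block, so $c_j\equiv r_j-r\pmod m$ and $c_i\equiv r_i\pmod m$ for $i\ne j$; minimizing $\sum c_i$ over the choice of $j$ yields $R_m-1-r$ if some $r_j\ge r$ is available, and $R_m-1+m-r$ otherwise. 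Equating with $\sum c_i=m-1$ and noting that the resulting slack is automatically a multiple of $m$, one finds feasibility exactly when some $r_j\ge r$ with $R_m\le m+r$, or else $R_m=r$.

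Combining the sub-cases, $\rho$ exists if and only if $R_m\in\{r,\,m+r\}$ and, when $R_m=m+r$, some $r_j\ge r$. Since $r<m$, an $e$-positive $S(\lambda)$ therefore satisfies $R_m\le m+r<2m$, which is~(1); under the extra hypothesis $R_m\ge m$ the option $R_m=r<m$ is excluded, forcing some $r_j\ge r$, which is~(2). The main technical obstacle is the case analysis in the $s=m$ sub-case, in particular tracking how the minimum of $\sum c_i$ depends on the leg that hosts the lone $r$-block; the vertex-capacity bounds $c_i\le\lambda_i$ play no role in this direction because we are deriving necessary conditions on $(c_i)$, and dropping the upper bounds only relaxes them.
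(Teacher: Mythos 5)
This lemma is quoted from Zheng's paper and is not proved in the present one, so there is no in-paper proof to compare against; judged on its own, your argument is correct and follows the natural route (which is also the one in the cited source): apply Wolfgang's criterion (\cref{thm:Wolfgang}) to the single type $m^{q}r$, note that the block containing the center is a prefix-star of size $m$ or $r$ while all other blocks are subpaths of legs, and do the mod-$m$ bookkeeping $c_i\equiv r_i$ (resp.\ $c_j\equiv r_j-r$ for the leg hosting the lone $r$-block) together with $R_m\equiv r\pmod m$, which yields exactly conclusions (1) and (2). One small quibble: your closing ``if and only if'' characterization of when such a connected partition exists is stronger than needed, and its sufficiency direction is not actually justified (it would require checking the capacities $c_i\le\lambda_i$ and that the slack, a multiple of $m$, can be distributed); since only the necessity direction is used, this does not affect the validity of the proof.
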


\begin{lemma}[\citeauthor{Zhe20X}]\label{lem:2odds}
Let $\lambda=(\lambda_1,\dots,\lambda_d)\vdash n-1$.
Suppose that
\[
\{\lambda_1,\dots,\lambda_d\}=\{2k_1+1,\,2k_2+1,\,2k_3,\,2k_4,\,\dots,\,2k_d\}
\]
as multisets, where $k_i\in\mathbb{Z}$. Then
\[
\brk[s]1{e_{32^{k_1+\dots+k_d}}}X_{S(\lambda)}
=4(k_1+k_2-k_3-\dots-k_d)+2d-1.
\]
\end{lemma}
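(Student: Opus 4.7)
The strategy is to expand $X_{S(\lambda)}$ via Proposition~\ref{prop:csf} (in power-sum form) using a leg-by-leg decomposition of spanning subgraphs, then extract the coefficient $[e_{32^k}]$ term by term using Proposition~\ref{prop:Wolfe} and Newton's identities, before simplifying the resulting combinatorial sum.

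Since the center $c$ is the spider's unique articulation point, any spanning subgraph $(V,E')$ of $S(\lambda)$ is specified by, for each leg $i$, the length $s_i \in \{0,1,\ldots,\lambda_i\}$ of the maximal initial segment of leg~$i$'s edges lying in $E'$, together with a free choice among the remaining leg-$i$ edges. The $c$-component then has $1+\sum_i s_i$ vertices, and the remaining $\lambda_i - s_i$ vertices on leg~$i$ form an independent path $P_{\lambda_i - s_i}$ whose spanning subgraph is chosen freely. Applying the power-sum expansion in Proposition~\ref{prop:csf} to this dichotomy gives the leg decomposition
\[
X_{S(\lambda)} = \sum_{0 \le s_i \le \lambda_i} (-1)^{s_1 + \cdots + s_d}\, p_{1 + s_1 + \cdots + s_d}\, \prod_{i=1}^d X_{P_{\lambda_i - s_i}}.
\]

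To extract $[e_{32^k}]$ from each summand $p_r \prod_i X_{P_{t_i}}$ (with $r = 1+\sum s_i$ and $t_i = \lambda_i - s_i$), each factor can only absorb a submultiset of $32^k$ of the form $\emptyset$, $2^b$, $3$, or $3\cdot 2^b$. Proposition~\ref{prop:Wolfe} yields $[e_{2^b}]X_{P_{2b}} = 2$ for $b\ge 1$ and $[e_{3\cdot 2^b}]X_{P_{2b+3}} = 4b+3$ for $b\ge 0$; Newton's identities yield $[e_{2^b}]p_{2b} = 2(-1)^b$ for $b\ge 1$ and $[e_{3\cdot 2^b}]p_{2b+3} = (-1)^b(2b+3)$ for $b\ge 0$. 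I reparametrize the sum by a distribution $(a_0, b_0; a_1, b_1; \ldots; a_d, b_d)$ where $a_j \in \{0,1\}$ records whether factor~$j$ absorbs the $3$ (with $\sum_j a_j = 1$) and $b_j \ge 0$ counts its $2$s (with $\sum_j b_j = k$). The identity $(-1)^{s_i} = (-1)^{\lambda_i + a_i}$ then collapses the overall sign to $(-1)^{n - a_0}$, which equals $-1$ if $a_0 = 0$ and $+1$ if $a_0 = 1$; the leg constraint $t_i \le \lambda_i$ becomes $b_i \le k_i$ when $a_i = 0$, and tightens to $b_i \le k_i - 1$ (odd leg) or $b_i \le k_i - 2$ (even leg) when $a_i = 1$.

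The main obstacle is carrying out the resulting multi-sum to the closed form $4(k_1+k_2-k_3-\cdots-k_d) + 2d - 1$. The parity hypothesis enters precisely through the tightened constraints: when leg $i_0$ absorbs the $3$, an odd leg admits one more value of $b_{i_0}$ than an even leg, and this asymmetry is what produces the differential signs of $k_1, k_2$ versus $k_3, \ldots, k_d$ in the final formula. After splitting on whether the $p$-factor or some leg absorbs the $3$, organizing contributions by the set of legs with $b_i \ge 1$, and exploiting cancellations between the two cases, the multi-sum should telescope to the claimed expression. Spot-checking on $\lambda=(1,1,2)$ (only two nonzero contributions, summing to $1$) and $\lambda=(3,3,2)$ (contributions $-3$ and $+12$, summing to $9$) confirms that the mechanism works as expected.
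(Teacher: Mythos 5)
The paper never proves \cref{lem:2odds}: it is quoted from \citet{Zhe20X}, so there is no in-paper argument to compare against and your attempt must stand on its own. Your setup does check out. The leg-by-leg classification of spanning edge sets is valid and gives $X_{S(\lambda)}=\sum_{0\le s_i\le\lambda_i}(-1)^{s_1+\dots+s_d}p_{1+s_1+\dots+s_d}\prod_{i}X_{P_{\lambda_i-s_i}}$ (with $X_{P_0}=1$); the evaluations $[e_{2^b}]X_{P_{2b}}=2$, $[e_{32^b}]X_{P_{2b+3}}=4b+3$, $[e_{2^b}]p_{2b}=2(-1)^b$, $[e_{32^b}]p_{2b+3}=(-1)^b(2b+3)$ agree with \cref{prop:Wolfe} and Newton's identities; only one factor can absorb the unique part $3$, a factor $X_{P_1}=e_1$ annihilates a term, and the constraints $b_i\le k_i$ versus $b_i\le k_i-1$ (odd leg) and $b_i\le k_i-2$ (even leg) are correct. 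One bookkeeping caution: the overall sign is not only $(-1)^{n-a_0}$; the extra factor $(-1)^{b_0}$ from the power-sum coefficient must be carried along as well.

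The genuine gap is the last step. The claim that the resulting multi-sum ``should telescope'' to $4(k_1+k_2-k_3-\dots-k_d)+2d-1$ is exactly the content of the lemma; it is asserted, not derived, and two spot checks do not establish it, while the shape of the answer (the opposite signs on $k_1,k_2$ versus $k_3,\dots,k_d$ and the constant $2d-1$) is produced precisely by this evaluation. The evaluation can in fact be completed along your lines, because the sum factorizes over the legs: with $w(0)=1$ and $w(b)=2$ for $b\ge1$ one has $\sum_{b=0}^{k}(-1)^bw(b)=(-1)^k$ and $\sum_{b=0}^{k}(-1)^bw(b)(k-b)$ equal to $0$ or $1$ according as $k$ is even or odd, so the case where the power sum absorbs the $3$ contributes $3-2m$, where $m$ is the number of odd $k_i$; when leg $j$ absorbs the $3$, the inner sum $\sum_{b=0}^{U}(-1)^b(4b+3)$ equals $2U+3$ for even $U$ and $-(2U+2)$ for odd $U$, giving a contribution $4k_j$ (plus $2$ if $k_j$ is odd) for $j\in\{1,2\}$ and $-4k_j+2$ (plus $2$ if $k_j$ is odd) for $j\ge3$, with the degenerate cases $k_j=0$ on an odd leg and $k_j=1$ on an even leg handled by the empty-sum convention. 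Summing, the parity-dependent terms cancel against $-2m$ and the stated formula drops out. Until this (or an equivalent) evaluation is written out, together with the boundary cases just mentioned, what you have is a correct plan rather than a proof.
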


\begin{lemma}[\citeauthor{Zhe20X}]\label{lem:rec:Sabc}
Suppose that $(a,b,c)\vdash n-1$.
Then the chromatic symmetric function of the spider $S(a,b,c)$ can be computed by
\[
X_{S(a,b,c)}
=X_{P_n}+\sum_{i=1}^c \brk1{X_{P_i}X_{P_{n-i}}-X_{P_{b+i}} X_{P_{n-b-i}}}.
\]
\end{lemma}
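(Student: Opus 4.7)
The plan is induction on $c$, establishing the one-step recurrence
\[
X_{S(a,b,c)} - X_{S(a+1,b,c-1)} = X_{P_c}\,X_{P_{a+b+1}} - X_{P_{b+c}}\,X_{P_{a+1}},
\]
with the convention that $S(a+c,b,0) = P_n$. Iterating this recurrence $c$ times and reindexing the resulting telescoping sum via $i = c-k$ produces the claimed formula; the base case $c = 0$ is trivial (the sum is empty and $S(a,b,0)=P_n$).

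To derive the recurrence, I would apply the triple-deletion property~(\cref{thm:rec:3del}) to the graph $H := S(a,b,c-1) \sqcup \{p\}$, namely the spider with legs $a$, $b$, $c-1$ augmented by an isolated vertex $p$, using the stable triple $\{p,\,u_a,\,z_{c-1}\}$ consisting of $p$ together with the endpoints of the $a$-leg and of the $(c-1)$-leg (with $z_0$ interpreted as the center $v_0$ when $c=1$). Setting $e_1=pu_a$, $e_2=u_az_{c-1}$, $e_3=z_{c-1}p$, one checks that $H_1 = S(a+1,b,c-1)$, since adding $pu_a$ extends the $a$-leg by the new leaf $p$, and $H_3 = S(a,b,c)$, since adding $z_{c-1}p$ extends the $(c-1)$-leg by the new leaf $p$. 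The identity $X_{H_{12}} = X_{H_1} + X_{H_{23}} - X_{H_3}$ then rearranges into
\[
X_{S(a,b,c)} - X_{S(a+1,b,c-1)} = X_{H_{23}} - X_{H_{12}}.
\]

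The main obstacle is to identify this right-hand side with $X_{P_c}\,X_{P_{a+b+1}} - X_{P_{b+c}}\,X_{P_{a+1}}$. Both $H_{12}$ and $H_{23}$ are unicyclic: each contains the cycle of length $a+c$ formed by the chord $e_2$, the $a$-leg, the center $v_0$, and the $(c-1)$-leg, together with the pendant $b$-leg at $v_0$ and a single pendant $p$ attached at $u_a$ in $H_{12}$ or at $z_{c-1}$ in $H_{23}$. To complete the identification one may either apply the triple-deletion a second time on a stable triple severing the chord, reducing the unicyclic graphs to disjoint unions of paths, or else expand both chromatic symmetric functions via the power-sum formula of~\cref{prop:csf}: since $H_{12}$ and $H_{23}$ share every edge except the single one incident to $p$, only those edge subsets in which $u_a$ and $z_{c-1}$ lie in distinct components of the common subgraph contribute to the difference, and a careful matching of these partitions produces exactly the two required path products. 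The degenerate case $a=b=c=1$ (the claw), in which the stable triple fails, can be checked by direct computation.
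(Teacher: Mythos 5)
Your high-level plan is the right one, and note at the outset that the paper does not actually prove this lemma (it quotes it from Zheng), so your argument has to stand on its own. The telescoping bookkeeping is fine: the one-step recurrence $X_{S(a,b,c)}-X_{S(a+1,b,c-1)}=X_{P_c}X_{P_{a+b+1}}-X_{P_{b+c}}X_{P_{a+1}}$, iterated with the convention $S(a+c,b,0)=P_n$, does give the stated formula, and your application of \cref{thm:rec:3del} to $H=S(a,b,c-1)\sqcup\{p\}$ with the triple $\{p,u_a,z_{c-1}\}$ correctly yields $X_{S(a,b,c)}-X_{S(a+1,b,c-1)}=X_{H_{23}}-X_{H_{12}}$. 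The genuine gap is what comes next: $H_{12}$ and $H_{23}$ are unicyclic (a cycle $C_{a+c}$ with a pendant $P_b$ at the center and a pendant vertex at $u_a$, respectively at $z_{c-1}$), and the identification $X_{H_{23}}-X_{H_{12}}=X_{P_c}X_{P_{a+b+1}}-X_{P_{b+c}}X_{P_{a+1}}$ that you still need is exactly equivalent to the recurrence you set out to prove; it carries the entire content of the induction step, and you do not establish it. Neither of your two suggestions is routine: a second triple deletion ``severing the chord'' cannot use a triple containing both $u_a$ and $z_{c-1}$ (they are now adjacent, so no stable triple contains them), and severing a different cycle edge produces further spiders rather than disjoint unions of paths, leading to another induction; the alternative via \cref{prop:csf}, comparing edge subsets that separate $u_a$ from $z_{c-1}$ and matching component sizes, is a genuine computation that is only asserted. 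As written, the proof reduces the statement to an unproved equivalent statement.

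The repair is to choose the base graph and the triple so that all four graphs in \cref{thm:rec:3del} are forests. Take $G=P_{a+1}\sqcup P_b\sqcup P_c$ with $P_{a+1}=u_a\dotsm u_1v_0$, $P_b=w_1\dotsm w_b$, $P_c=z_1\dotsm z_c$, and the triple $(u,v,w)=(w_1,v_0,z_1)$, which is automatically stable because the three vertices lie in distinct components (so even the claw causes no degeneracy). Then $e_1=v_0w_1$, $e_2=v_0z_1$, $e_3=w_1z_1$, and one checks directly that $G_1=P_{a+b+1}\sqcup P_c$, $G_3=P_{a+1}\sqcup P_{b+c}$, $G_{12}=S(a,b,c)$ and $G_{23}=S(a+1,b,c-1)$. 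The relation $X_{G_{12}}=X_{G_1}+X_{G_{23}}-X_{G_3}$ of \cref{thm:rec:3del}, combined with \cref{prop:csf:disjoint}, is then verbatim your one-step recurrence, with no unicyclic graphs and no exceptional cases, and your telescoping argument finishes the proof.
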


For Schur positivity of graphs, the authors~\cite{WW20} obtained the following results.

\begin{theorem}[\citeauthor{WW20}]\label{thm:nS:balanced}
Any Schur positive connected bipartite graph
has a balanced stable bipartition.
\end{theorem}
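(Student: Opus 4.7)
The plan is to prove the contrapositive: if $G$ is a connected bipartite graph whose unique stable bipartition $\{A,B\}$ has sizes $|A|=a$ and $|B|=b$ with $b\ge a+2$, then $G$ is not Schur positive. The main tool is \citet[Proposition 1.5]{Sta98}, already recalled in the introduction: the set of types of stable partitions of any Schur positive $n$-vertex graph is an order ideal in the dominance order on partitions of $n$.

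The bipartition itself is a stable partition of type $\lambda=(b,a)$, so $\lambda$ belongs to this ideal. I would then exhibit a partition $\mu\trianglelefteq\lambda$ that is \emph{not} a stable partition type of $G$; by the order ideal property this contradicts Schur positivity. The natural candidate is $\mu=(b-1,\,a+1)$, which is a legitimate partition of $n$ because $b-1\ge a+1$ (from $b\ge a+2$) and clearly satisfies $\mu\trianglelefteq\lambda$ since $b-1\le b$.

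It remains to show that $G$ admits no stable partition $\{S_1,S_2\}$ of type $(b-1,a+1)$. Writing $A_i=S_i\cap A$ and $B_i=S_i\cap B$, the subcases $B_1=\emptyset$ and $B_2=\emptyset$ are excluded immediately by the size bound $|A|=a<a+1\le b-1$. The subcase $A_1=\emptyset$ would force $S_2=A\cup\{v\}$ for some $v\in B$ having no neighbor in $A$, contradicting connectivity of $G$; and $A_2=\emptyset$ similarly forces the nonempty set $B\setminus S_2$ (nonempty since $|B\setminus S_2|=b-a-1\ge1$) to have no edges into $A$, again contradicting connectivity. In the remaining subcase all four sets $A_1,A_2,B_1,B_2$ are nonempty, and the stability of $S_1$ and $S_2$ forces every edge of $G$ to join either $A_1$ with $B_2$ or $A_2$ with $B_1$; hence $G$ disconnects into the nonempty pieces $A_1\cup B_2$ and $A_2\cup B_1$, contradicting connectivity once more.

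The main obstacle is engineering the right candidate type $\mu$: it must lie strictly below $(b,a)$ in dominance order yet be structurally incompatible with a connected bipartite structure. The choice $(b-1,a+1)$ works cleanly because shifting one unit across the bipartition forces each putative block either to sit monochromatically inside one part (which connectivity forbids) or to split both $A$ and $B$ nontrivially in a way that severs the graph. Boundary cases such as $a=0$ reduce to $G$ being a single isolated vertex, which is already balanced, so the assumption $a\ge1$ is harmless.
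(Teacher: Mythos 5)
Your argument is correct, but it is worth noting that this paper does not prove the statement at all: \cref{thm:nS:balanced} is imported from \cite{WW20}, where it is obtained from the special rim hook tabloid machinery that appears here as \cref{thm:s-in-X}. In that setting one takes the two-row shape $\lambda=(b-1,a+1)$ (with $b\ge a+2$ the part sizes of the unique bipartition): the only tabloids of this shape have contents $(b-1,a+1)$ (sign $+1$) and $(b,a)$ (sign $-1$, one hook spanning both rows), and since a connected bipartite graph has exactly one stable bipartition, of type $(b,a)$, and none of type $(b-1,a+1)$, the formula gives $[s_{(b-1,a+1)}]X_G=-1$. You reach the same contradiction by a genuinely different route: instead of exhibiting an explicit negative coefficient, you feed the same two structural facts (the type $(b,a)$ occurs, the dominated type $(b-1,a+1)$ does not) into Stanley's order-ideal criterion \cite{Sta98}, which the paper recalls in the introduction. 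Your way is shorter and treats Stanley's proposition as a black box; the coefficient computation stays inside the authors' own toolkit and yields quantitative information (a specific negative Schur coefficient) rather than a pure non-existence statement. Your connectivity casework for the non-existence of a stable partition of type $(b-1,a+1)$ is sound; the only blemish is that in the cases $A_1=\emptyset$ and $A_2=\emptyset$ you silently fix which block has size $a+1$ and which has size $b-1$, but since the labels of $S_1,S_2$ are interchangeable the two subcases you wrote out do cover all possibilities, and in every case a nonempty subset of $B$ lands in the same stable block as all of $A$ and is therefore isolated, contradicting connectivity.
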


\begin{theorem}[\citeauthor{WW20}]\label{thm:s-in-X}
For any graph $G=(V,E)$ and any integer partition $\lambda$ of $\abs{V}$,
\begin{equation}\label{s-in-X}
[s_\lambda]X_G
=\sum_{T\in\mathcal{T}_\lambda}(-1)^{\abs{W_T}} \tilde a_{\kappa_T},
\end{equation}
where $\mathcal{T}_\lambda$ is the set 
of special rim hook tabloids $T$ of shape $\lambda$ 
such that $G$ contains a stable partition of type $\kappa_T$,
$\abs{W_T}$ is the number of rim hooks of $T$ 
that span an even number of rows,
and~$\tilde a_{\kappa}$ is the number of semi-ordered stable partitions of $G$
of type $\kappa$. 
\end{theorem}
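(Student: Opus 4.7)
The plan is to expand $X_G$ in the augmented monomial basis using \cref{prop:csf}, and then convert to the Schur basis by means of the combinatorial interpretation of the inverse Kostka matrix via special rim hook tabloids (due to E\u{g}ecio\u{g}lu and Remmel). First I would write
\[
X_G \;=\; \sum_\mu a_\mu \tilde{m}_\mu \;=\; \sum_\mu \mu^!\, a_\mu\, m_\mu,
\]
using the definition $\tilde{m}_\mu = \mu^! m_\mu$. This reduces the task to extracting $[s_\lambda] m_\mu$, which is the $(\mu,\lambda)$-entry of the inverse Kostka matrix $K^{-1}$.

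The key ingredient I would invoke is the identity
\[
(K^{-1})_{\mu\lambda} \;=\; \sum_{T} (-1)^{|W_T|},
\]
where $T$ ranges over all special rim hook tabloids of shape $\lambda$ whose rim hook sizes form the partition $\mu$. Here the sign of each tabloid equals $\prod_i (-1)^{r_i - 1}$, with $r_i$ denoting the number of rows spanned by the $i$-th rim hook; since $(-1)^{r_i-1}=-1$ precisely when $r_i$ is even, this product equals $(-1)^{|W_T|}$. Substituting into the display above gives
\[
[s_\lambda] X_G
\;=\; \sum_\mu \mu^!\, a_\mu \sum_{T:\,\kappa_T=\mu}(-1)^{|W_T|}
\;=\; \sum_{T} (-1)^{|W_T|}\, \kappa_T^!\, a_{\kappa_T},
\]
where the last sum runs over all special rim hook tabloids of shape $\lambda$.

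Finally I would verify the identity $\tilde{a}_\kappa = \kappa^!\, a_\kappa$: a semi-ordered stable partition of type $\kappa$ arises by choosing an unordered stable partition of that type and then linearly ordering the blocks of each fixed size, which contributes a factor of $\prod_i k_i! = \kappa^!$ per stable partition. Tabloids $T \notin \mathcal{T}_\lambda$ contribute $\tilde{a}_{\kappa_T}=0$, so restricting the summation to $\mathcal{T}_\lambda$ is harmless and yields exactly \cref{s-in-X}. The main obstacle is simply to check that the sign convention and the notion of ``special rim hook tabloid'' used in the source of the inverse Kostka formula agree with those in the statement to be proved; once these conventions are aligned, the proof is routine bookkeeping.
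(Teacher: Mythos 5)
Your argument is correct: combining Stanley's expansion $X_G=\sum_\mu a_\mu\tilde m_\mu$ (\cref{prop:csf}) with the E\u{g}ecio\u{g}lu--Remmel special rim hook interpretation of the inverse Kostka matrix, and then absorbing the multiplicities via $\tilde a_\kappa=\kappa^!a_\kappa$ and discarding tabloids whose content admits no stable partition, is exactly the derivation behind \cref{s-in-X}. The present paper only cites this theorem from \cite{WW20} rather than reproving it, and your proposal follows essentially the same route as that source, so it stands as is.
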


\section{The positivity of spiders $S(a,b,1)$}\label{sec:spider:ab1}

This section is devoted to the $e$-positivity and Schur positivity of spiders $S(a,b,1)$. First of all, it is hardly true that a spider $S(a,b,c)$ with odd $b$ and odd $c$ is $e$-positive.

\begin{theorem}\label{thm:spider:eoo}
Let $(a,b,c)\vdash n-1$.
Suppose that $b$ and $c$ are odd.
If the spider $S(a,b,c)$ is $e$-positive,
then $a=b+c$.
\end{theorem}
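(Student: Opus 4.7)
The plan is to combine three earlier tools to pin down $a$: Lemma~\ref{lem:e:spider:mod} rules out the case where $a$ is also odd; Lemma~\ref{lem:2odds} yields an upper bound $a \le b+c$; and Theorem~\ref{thm:lambda1:d:n} supplies the matching lower bound. Since $(a,b,c) \vdash n-1$ is a partition, I may assume throughout that $a \ge b \ge c$, so $a$ is the largest part.

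First I would split on the parity of $a$. If $a$ is odd, then all three parts of $\lambda = (a,b,c)$ are odd. Applying Lemma~\ref{lem:e:spider:mod} with $m = 2$, every residue $r_i$ equals $1$, so $R_2 = 1 + 3 = 4 = 2m$, contradicting the inequality $R_2 < 2m$ required by part~(1). Hence $a$ must be even.

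Assuming $a$ is even, I would write $a = 2k_3$, $b = 2k_1+1$, $c = 2k_2+1$. Lemma~\ref{lem:2odds} with $d = 3$ then gives
\[
\brk[s]1{e_{3 2^{k_1+k_2+k_3}}} X_{S(a,b,c)} = 4(k_1+k_2-k_3) + 5 = 2(b+c-a) + 1.
\]
Nonnegativity of this coefficient, required for $e$-positivity, yields $a \le b+c$. For the reverse direction, note that $b$ and $c$ odd together with $a$ even make $n = a+b+c+1$ odd, so $\floor{n/2} = (a+b+c)/2$. Theorem~\ref{thm:lambda1:d:n}, applied to the largest part $\lambda_1 = a$, then gives $a \ge (a+b+c)/2$, which is equivalent to $a \ge b+c$. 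Combining the two bounds delivers $a = b+c$.

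The main obstacle is selecting the right $e$-coefficient to extract; once one recognizes that Lemma~\ref{lem:2odds} produces a coefficient depending linearly on $b+c-a$ with slope~$2$, the remaining algebra is short. A minor but necessary bookkeeping point is checking the parity of $n$ so that $\floor{n/2}$ in Theorem~\ref{thm:lambda1:d:n} evaluates exactly to $(a+b+c)/2$, which is what makes the two bounds match.
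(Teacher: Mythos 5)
Your proof is correct and follows essentially the same route as the paper: the even-ness of $a$, the upper bound $a\le b+c$ from the coefficient $\brk[s]1{e_{32^{k_1+k_2+k_3}}}X_{S(\lambda)}$ of \cref{lem:2odds}, and the lower bound $a\ge\floor{n/2}=b+c$ from \cref{thm:lambda1:d:n}. The only difference is cosmetic: you justify the evenness of $a$ explicitly via \cref{lem:e:spider:mod} with $m=2$, whereas the paper simply asserts it.
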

\begin{proof}
Write $(a,b,c)=(\lambda_1,\lambda_2,\lambda_3)$.
Suppose that $\lambda_i=2k_i+1$ for $i\in\{2,3\}$.
Since $S(\lambda)$ is $e$-positive, the part $\lambda_1$ must be even.
Suppose that $\lambda_1=2k_1$.
Then $n=2(k_1+k_2+k_3)+3$. By \cref{lem:2odds}, 
\[
\brk[s]1{e_{32^{k_1+k_2+k_3}}}X_{S(\lambda)}
=4(k_2+k_3-k_1)+5
=2n-1-4\lambda_1.
\]
Since $X_{S(\lambda)}$ is $e$-positive, the non-negativity of the formula above
implies that $\lambda_1\le\floor{n/2}$. 
By \cref{thm:lambda1:d:n} we know that $\lambda_1\ge \floor{n/2}$.
Hence $\lambda_1=\floor{n/2}$, i.e., $\lambda_1=\lambda_2+\lambda_3$.
\end{proof}

Conversely, 
we do not know whether the spider $S(b+c,\,b,\,c)$ is $e$-positive.
The $e$-positivity of $S(b+1,\,b,\,1)$ was conjectured 
by Aliniaeifard, van Willigenburg, and Wang, which was a particular case of a more general conjecture, see~\citet[Conjecture 6.3]{Zhe20X}.
It is direct to verify the $e$-positivity of the spider $S(2m+2,\,2m+1,\,1)$ for $m\le 15$.

We are able to prove that $S(b+3,\,b,\,3)$ for odd $b\ge7$ are not $e$-positive.

\begin{theorem}
Let $b$ be an odd positive integer. 
The spider $S(b+3,\,b,\,3)$ is $e$-positive if and only if $b=5$.
\end{theorem}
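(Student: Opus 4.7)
The plan is to handle the two implications separately. For sufficiency ($b = 5$), the spider $S(8,5,3)$ has only $17$ vertices, so $X_{S(8,5,3)}$ can be computed explicitly via Russell's program (as indicated in the introduction), and one verifies that every $e$-coefficient is nonnegative.

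For necessity with $b$ odd and $b \neq 5$, I plan to split by $b \bmod 4$. When $b \equiv 3 \pmod 4$, including $b = 3$, the residues of $(b+3, b, 3)$ modulo $4$ are $(2,3,3)$, giving $R_4 = 1 + 2 + 3 + 3 = 9 \geq 2 \cdot 4$, so \cref{lem:e:spider:mod}(1) with $m = 4$ shows $S(b+3, b, 3)$ is not $e$-positive. This handles $b \in \{3, 7, 11, 15, \ldots\}$. When $b = 9$, the residues modulo $5$ are $(2,4,3)$, giving $R_5 = 10 = 2 \cdot 5$, and \cref{lem:e:spider:mod}(1) applies with $m = 5$.

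The remaining case is $b \equiv 1 \pmod 4$ with $b \geq 13$, where no choice of modulus violates \cref{lem:e:spider:mod} and \cref{lem:2odds} gives only $[e_{3 \cdot 2^{b+2}}]X_{S(b+3,b,3)} = 4(k_1+k_2-k_3) + 5 = 1 > 0$. The plan here is to extract a suitable $e$-coefficient directly. Using \cref{lem:rec:Sabc} with $c = 3$,
\[
X_{S(b+3, b, 3)} = X_{P_{2b+7}} + \sum_{i=1}^{3} \bigl(X_{P_i} X_{P_{2b+7-i}} - X_{P_{b+i}} X_{P_{b+7-i}}\bigr),
\]
so for any partition $\mu \vdash 2b+7$ the coefficient $[e_\mu]X_{S(b+3,b,3)}$ decomposes as a signed sum of terms $[e_\mu]X_{P_d}$ (from the single-path term) and $\sum_{\mu = \kappa \cup \nu}[e_\kappa]X_{P_i} \cdot [e_\nu]X_{P_{n-i}}$ (from the product terms), each ingredient being given in closed form by \cref{prop:Wolfe}. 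The main obstacle is selecting a partition $\mu$, uniform in $b$, for which the resulting polynomial in $b$ is strictly negative throughout $b \geq 13$. By analogy with \cref{lem:2odds}, promising candidates involve a small odd distinguished part together with many parts of size $2$, so that the cancellations collapse to a tractable low-degree polynomial of predictable sign.
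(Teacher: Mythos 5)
Your reductions for part of the necessity direction are sound, and in fact simpler than the paper's: for $b\equiv3\pmod4$ the residues of $(b+3,b,3)$ modulo $4$ are $(2,3,3)$, so $R_4=9\ge 2\cdot4$, and for $b=9$ the residues modulo $5$ are $(2,4,3)$, so $R_5=10\ge 2\cdot 5$; in both cases part (1) of \cref{lem:e:spider:mod} rules out $e$-positivity, whereas the paper instead computes $[e_{54^{m+1}}]X_{S(2m+4,2m+1,3)}=-3^{m-1}(16m+27)$ for $b=2m+1$ with $m$ odd. The sufficiency check for $b=5$ by explicit computation also matches what the paper does.

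However, for the remaining case $b\equiv1\pmod4$ with $b\ge13$ you have only a plan, not an argument, and this is precisely where the substance of the theorem lies. You have not named any partition $\mu$ with $[e_\mu]X_{S(b+3,b,3)}<0$, and the choice is far from routine: the paper takes $\mu=5^54^{m-4}$ (writing $b=2m+1$, $m\ge 4$ even), expands via \cref{lem:rec:Sabc} into five products of path coefficients evaluated by \cref{prop:Wolfe}, and must then prove positivity of the quintic $32m^5-300m^4+1475m^3-2970m^2+2048m-240$ to conclude negativity of the coefficient. Your stated heuristic --- a small odd part together with many parts of size $2$ --- points at exactly the family already settled by \cref{lem:2odds}, where for these spiders the coefficient of $e_{32^{k}}$ equals $+1$; so that direction cannot yield the needed negative coefficient, and you give no evidence that any other partition built from $2$'s and $3$'s does better (the paper's working choice uses parts of sizes $4$ and $5$). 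As it stands, the ``only if'' direction is unproved for every $b\equiv1\pmod4$ with $b\ge13$, so the proposal has a genuine gap at the heart of the theorem.
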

\begin{proof}
Let $G=S(2m+4,\,2m+1,\,3)$. 
The order of $G$ is $N=4m+9$.
Suppose that $m$ is an odd positive integer. By~\cref{lem:rec:Sabc,prop:Wolfe},
\begin{align*}
&[e_{54^{m+1}}]X_G\\[-5pt]
=\ &[e_{54^{m+1}}]X_{P_{4m+9}}+[e_{54^{m+1}}]\sum_{i=1}^3 \brk1{X_{P_i}X_{P_{4m+9-i}}-X_{P_{2m+1+i}} X_{P_{2m+8-i}}}\\
=\ &[e_{54^{m+1}}]X_{P_{4m+9}}
-[e_{4^{(m+1)/2}}]X_{P_{2m+2}} [e_{54^{(m+1)/2}}] X_{P_{2m+7}}
-[e_{54^{(m-1)/2}}]X_{P_{2m+3}} [e_{4^{(m+3)/2}}]X_{P_{2m+6}}\\[4pt]
=\ &3^m(16m+31)
-4\cdot 3^{(m-1)/2}\cdot 3^{(m-1)/2}(8m+23)
-3^{(m-3)/2}(8m+7)\cdot 4\cdot 3^{(m+1)/2}\\[3pt]
=\ &-3^{m-1}(16m+27)<0.
\end{align*}
Now suppose that $m$ is an even positive integer.
The positivity of $S(8,5,3)$ can be verified by direct computation with the aid of~\cref{lem:rec:Sabc}. Let $m\ge 4$. By~\cref{lem:rec:Sabc,prop:Wolfe},
\begin{align*}
&[e_{5^54^{m-4}}]X_{G}\\[-5pt]
=\ &[e_{5^54^{m-4}}]X_{P_{4m+9}}
-[e_{5^54^{m-4}}]\sum_{i=1}^3 X_{P_{2m+1+i}} X_{P_{2m+8-i}}\\
=\ &[e_{5^54^{m-4}}]X_{P_{4m+9}}
-[e_{5^24^{m/2-2}}]X_{P_{2m+2}} [e_{5^34^{m/2-2}}]X_{P_{2m+7}}
-[e_{5^34^{m/2-3}}]X_{P_{2m+3}} [e_{5^24^{m/2-1}}]X_{P_{2m+6}}\\[4pt]
&
-[e_{4^{m/2+1}}]X_{P_{2m+4}} [e_{5^54^{m/2-5}}]X_{P_{2m+5}}
-[e_{5^44^{m/2-4}}]X_{P_{2m+4}} [e_{54^{m/2}}]X_{P_{2m+5}}\\
=\ &\binom{m+1}{5}\cdot4^4\cdot3^{m-5}\brk3{20-\frac{4m+9}{m+1}}\\
&-\binom{m/2}{2}\cdot4\cdot3^{m/2-3}\brk3{20-\frac{2m+2}{m/2}}
\cdot\binom{m/2+1}{3}\cdot4^2\cdot3^{m/2-3}\brk3{20-\frac{2m+7}{m/2+1}}\\
&-\binom{m/2}{3}\cdot4^2\cdot3^{m/2-4}\brk3{20-\frac{2m+3}{m/2}}
\cdot\binom{m/2+1}{2}\cdot4\cdot3^{m/2-2}\brk3{20-\frac{2m+6}{m/2+1}}\\
&-4\cdot3^{m/2}\cdot\binom{m/2}{5}\cdot4^4\cdot3^{m/2-6}\brk3{20-\frac{2m+5}{m/2}}\\
&-\binom{m/2}{4}\cdot4^3\cdot3^{m/2-5}\brk3{20-\frac{2m+5}{m/2}}
\cdot 3^{m/2-1}\brk1{20(m/2+1)-(2m+5)}\\
=\ &-\frac{4}{5}\cdot 3^{m-7}\brk1{32m^5-300m^4+1475m^3-2970m^2+2048m-240}.
\end{align*}
The positivity of $F(m)=32m^5-300m^4+1475m^3-2970m^2+2048m-240$ for even integers $m\ge 4$ can be seen by a direct check for $m\in\{4,6,8\}$ and by
\[
F(m)=(32m-300)m^4+(1475m-2970)m^2+(2048m-240)>0
\]
for $m\ge 10$. Therefore, $[e_{5^54^{m-4}}]X_{G}<0$ for even $m\ge 4$.
This completes the proof.
\end{proof}

Now we concentrate on the positivity of $S(a,b,1)$ for even $b$.

\begin{theorem}\label{thm:spider:ab1}
Let $b$ be even and $a\ge b\ge 2$.
If the spider $S(a,b,1)$ is $e$-positive,
then 
we have the following.
\begin{enumerate}
\item\label[itm]{itm:spider:ab1:b=2mod3}
If $b\equiv2\pmod3$, then
either 
\begin{itemize}
\item
$a\equiv0\pmod3$ and $a\le 2b+2$, or 
\item
$a\equiv1\pmod3$ and $a\le 2b-3$.
\end{itemize}
\item\label[itm]{itm:spider:ab1:b<>2mod3}
If $b\not\equiv2\pmod3$, then $a\le b^2-1$ or $a=b^2+b$.
\item\label[itm]{itm:spider:ab1:ab:even}
If $a$ is even, then
$a>b+(1+\sqrt{8b-3})/2$.
\end{enumerate}
\end{theorem}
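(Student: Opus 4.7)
The plan is to deduce each of the three assertions by exhibiting a concrete $e$-coefficient of $X_{S(a,b,1)}$ that becomes negative whenever the stated inequality fails, supplemented by the residue criterion in Lemma~\ref{lem:e:spider:mod}. The computational engine is Lemma~\ref{lem:rec:Sabc} specialized to $c=1$, namely
\[
X_{S(a,b,1)} = X_{P_n} + e_1 X_{P_{n-1}} - X_{P_{b+1}} X_{P_{a+1}}, \qquad n=a+b+2,
\]
combined with Wolfe's explicit formula for $[e_\lambda] X_{P_d}$ from Proposition~\ref{prop:Wolfe}.

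For Part~(i), since $b\equiv 2\pmod 3$, applying Lemma~\ref{lem:e:spider:mod} with $m=3$ gives $R_3=(a\bmod 3)+4$. The constraint $R_3<6$ immediately excludes the case $a\equiv 2\pmod 3$. For the remaining classes $a\equiv 0$ and $a\equiv 1\pmod 3$, the plan is to compute $[e_\lambda] X_{S(a,b,1)}$ for partitions $\lambda$ of the form $3^j\cdot 1^s\cdot 2^k$ (with parities chosen so that $|\lambda|=n$) and show that, after expanding via the recursion and Wolfe's formula, the resulting expression is a linear function of $a$ whose sign change occurs exactly at $2b+2$ and $2b-3$ respectively. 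For Part~(ii), the mod-$3$ test is vacuous, but applying Lemma~\ref{lem:e:spider:mod} with $m=b+1$ gives $R_{b+1}=(a\bmod(b+1))+b+2$, so the constraint $R_{b+1}<2(b+1)$ forces $a\not\equiv b\pmod{b+1}$, excluding the arithmetic progression $a\in\{b,\,2b+1,\,3b+2,\ldots\}$. The peculiar exceptional value $a=b^2+b=b(b+1)$ in the stated bound suggests that two separate $e$-coefficient computations must be combined, each a polynomial in $a$ whose common nonnegativity region is precisely $\{a\le b^2-1\}\cup\{a=b(b+1)\}$.

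Part~(iii) admits a single clean coefficient computation. Because $a$ and $b$ are both even, $n$ is even and $\lambda = 3^2\cdot 2^{(a+b-4)/2}$ is a valid partition of $n$. The term $e_1 X_{P_{n-1}}$ contributes nothing since $\lambda$ has no $1$-part, and the only splitting of $\lambda$ into sub-multisets of odd sizes $b+1$ and $a+1$ places one copy of $3$ on each side, namely $\mu=3\cdot 2^{(b-2)/2}$ and $\nu=3\cdot 2^{(a-2)/2}$. Proposition~\ref{prop:Wolfe} yields
\[
[e_\mu] X_{P_{b+1}} = 2b-1, \qquad [e_\nu] X_{P_{a+1}} = 2a-1, \qquad [e_\lambda] X_{P_n} = (a+b-2)(a+b-1),
\]
so the recursion gives
\[
[e_\lambda] X_{S(a,b,1)} = (a+b-2)(a+b-1) - (2a-1)(2b-1) = (a-b)^2 - (a+b) + 1.
\]
Nonnegativity of this quadratic in $a-b$ is equivalent to $a-b\ge(1+\sqrt{8b-3})/2$. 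Since $8b-3\equiv 5\pmod 8$ while squares are $\equiv 0,1,4\pmod 8$, the number $8b-3$ is never a perfect square, so the inequality becomes strict, yielding $a>b+(1+\sqrt{8b-3})/2$.

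The main obstacle is Part~(ii): the piecewise condition ``$a\le b^2-1$ or $a=b^2+b$'' rules out any single-coefficient argument, and locating the correct pair of partitions whose coefficients vanish exactly at $a=b^2-1$ and $a=b(b+1)$ will require either genuine combinatorial insight or systematic computer experimentation. Part~(i) is less delicate once the $m=3$ residue step is made, reducing to single-coefficient computations in the style carried out for $S(b+3,b,3)$ earlier in the paper.
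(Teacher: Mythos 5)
Your Part~(iii) is complete and is exactly the paper's argument: the coefficient of $e_{3^22^{(N/2)-3}}$ (your $3^2 2^{(a+b-4)/2}$) computed via the recursion and Wolfe's formula gives $(a+b-2)(a+b-1)-(2a-1)(2b-1)=a^2-(2b+1)a+b^2-b+1$, and your observation that $8b-3\equiv 5\pmod 8$ is never a square correctly yields the strict inequality. Your residue eliminations are also sound: Lemma~\ref{lem:e:spider:mod} with $m=3$ kills $a\equiv2\pmod 3$ in Part~(i) (the paper instead computes $[e_{3^{N/3}}]X_G<0$ directly, but your route is fine), and with $m=b+1$ it kills $a\equiv b\pmod{b+1}$, which is precisely how the paper handles the $r=b$ case of Part~(ii).

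The genuine gap is that everything else is only a plan. For the two real bounds in Part~(i) you say you will find a partition of the form $3^j1^s2^k$ whose coefficient changes sign at $2b+2$, resp.\ $2b-3$, but you name no partition and do no computation; the paper's choices are $e_{43^{(N-4)/3}}$ for $a\equiv0\pmod3$ (note the part $4$, outside your proposed shape, and note that any $1$-parts would re-activate the $e_1X_{P_{N-1}}$ term you want to suppress) giving $2^{(N-1)/3-3}(-3a+6b+7)$, and $e_{3^{(N-2)/3}2}$ for $a\equiv1\pmod3$ giving $2^{(N-2)/3-2}(-a+2b-1)$; the stated bounds then follow from the residues. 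For Part~(ii) you explicitly concede you do not know how to proceed, and your structural guess (``a pair of polynomials whose common nonnegativity region is $\{a\le b^2-1\}\cup\{a=b^2+b\}$'') is not how the exceptional value arises. The paper writes $a=(b+1)n+r$ and, for each residue class, extracts a single coefficient: $e_{(b+1)^{n+1}(r+1)}$ when $1\le r\le b-1$, which equals $b^{n-1}r\brk[s]1{b^2-(b+1)n}-b^n$ and is negative once $n\ge b-1$, and $e_{(b+2)(b+1)^n}$ when $r=0$, which equals $b^{n-2}\brk[s]1{(b+1)^2(b-n)+1}$ and is negative once $n\ge b+1$; the $r=0$ family survives one step further than the others, and the union of the surviving ranges is exactly ``$a\le b^2-1$ or $a=b^2+b$''. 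Without these coefficient computations, Parts~(i) and~(ii) of the theorem are not established by your proposal.
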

\begin{proof}
Let $G=S(a,b,1)$ and $a=(b+1)n+r$, where $r\in\{0,1,\dots,b\}$.
Let $N$ be the number of vertices of $G$. Then
\[
N=(b+1)(n+1)+r+1.
\]
Consider $G$ as obtained by adding a pending edge $v_{b+1}v_N$ to the path $v_1\dotsm v_{N-1}$. By \cref{lem:rec:Sabc},
\begin{equation}\label{rec:spider:ab1}
X_G=e_1 X_{P_{N-1}}+X_{P_N}-X_{P_{a+1}}X_{P_{b+1}}.
\end{equation}
We will extract certain $e$-coefficient from both sides of \cref{rec:spider:ab1}
and show its negativity using \cref{prop:Wolfe}. 

\noindent\cref{itm:spider:ab1:b=2mod3}
Suppose that $b\equiv2\pmod3$. 
If $a\equiv0\pmod3$, then $N\equiv1\pmod3$ and
\begin{align*}
[e_{43^{(N-4)/3}}]X_G
&=[e_{43^{(N-4)/3}}]X_{P_N}
-[e_{43^{(a-3)/3}}]X_{P_{a+1}}[e_{3^{(b+1)/3}}]X_{P_{b+1}}\\
&=2^{(N-1)/3-3}(-3a+6b+7).
\end{align*}
Thus the $e$-positivity of $G$ together with the residue of $a$ implies that $a\le 2b+2$.
If $a\equiv1\pmod3$, then $N\equiv2\pmod3$ and
\begin{align*}
[e_{3^{(N-2)/3}2}]X_G
&=[e_{3^{(N-2)/3}2}]X_{P_N}
-[e_{3^{(a-1)/3}2}]X_{P_{a+1}}[e_{3^{(b+1)/3}}]X_{P_{b+1}}\\
&=2^{(N-2)/3-2}(-a+2b-1).
\end{align*}
Thus the $e$-positivity of $G$ implies $a\le 2b-3$.
If $a\equiv2\pmod3$, then $N\equiv0\pmod3$ and
\[
[e_{3^{N/3}}]X_G
=[e_{3^{N/3}}]X_{P_N}-[e_{3^{(a+1)/3}}]X_{P_{a+1}}[e_{3^{(b+1)/3}}]X_{P_{b+1}}
=-3\cdot2^{N/3-2}<0.
\]

\noindent\cref{itm:spider:ab1:b<>2mod3}
We proceed according to the value of $r$.
When $r=b$, we see that $G$ is not $e$-positive by taking $m=b+1$ in \cref{lem:e:spider:mod}.
If $1\le r\le b-1$, then 
\begin{align*}
[e_{(b+1)^{n+1}(r+1)}]X_G
&=[e_{(b+1)^{n+1}(r+1)}]X_{P_N}-[e_{(b+1)^n(r+1)}]X_{P_{a+1}}[e_{b+1}]X_{P_{b+1}}\\
&=b^{n-1}r\brk[s]1{b^2-(b+1)n}-b^n,
\end{align*}
which is negative as if $n\ge b-1$.
If $r=0$, then
\begin{align*}
[e_{(b+2)(b+1)^n}]X_G
&=[e_{(b+2)(b+1)^n}]X_{P_N}-[e_{(b+2)(b+1)^{n-1}}]X_{P_{a+1}}[e_{b+1}]X_{P_{b+1}}\\
&=b^{n-2}\brk[s]1{(b+1)^2(b-n)+1},
\end{align*}
which is negative as if $n\ge b+1$.
Summing up the results above yields either $a\le b^2-1$ or $a=b^2+b$.

\noindent\cref{itm:spider:ab1:ab:even}
Now, we suppose that $a$ and $b$ are even.
By \cref{rec:spider:ab1},
\begin{align*}
[e_{3^22^{N/2-3}}]X_G
&=[e_{3^22^{N/2-3}}]X_{P_N}
-[e_{32^{a/2-1}}]X_{P_{a+1}}[e_{32^{b/2-1}}]X_{P_{b+1}}\\
&=a^2-(2b+1)a+(b^2-b+1).
\end{align*}
Thus the $e$-positivity of $G$ implies that $a\ge b+(1+\sqrt{8b-3})/2$,
in which the equality does not hold since $\sqrt{8b-3}$ is not an integer.
This completes the proof.
\end{proof}


\Cref{thm:spider:ab1} is sharp in the following sense:
\cref{itm:spider:ab1:b=2mod3}
The spiders $S(6,2,1)$, $S(18,8,1)$ and $S(30,14,1)$ are $e$-positive, as well as the 
the spiders $S(13,8,1)$, $S(25,14,1)$ and $S(37,20,1)$.
\cref{itm:spider:ab1:b<>2mod3}
The spiders $S(15,4,1)$ and $S(35,6,1)$ are $e$-positive. 
\cref{itm:spider:ab1:ab:even}
The spiders $S(6,2,1)$, $S(8,4,1)$ and $S(10,6,1)$ are $e$-positive.


\begin{corollary}
We have the following.
\begin{enumerate}
\item
The spider $S(a,2,1)$ is $e$-positive if and only if $a\in\{3,6\}$.
\item
The spider $S(a,4,1)$ is $e$-positive if and only if $a\in\{5,8,10,12,13,15,20\}$.
\item
The spider $S(a,6,1)$ is $e$-positive if and only if 
$a\in\{7,8,\dots,35\}\cup\{42\}\setminus\{8,13,20,27,34\}$.
\item
The spider $S(a,8,1)$ is $e$-positive if and only if 
$a\in\{9,13,15,18\}$.
\end{enumerate}
\end{corollary}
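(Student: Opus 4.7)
The plan is to treat each $b\in\{2,4,6,8\}$ separately, in each case combining the necessary conditions from \cref{thm:spider:ab1} (and, where helpful, \cref{lem:e:spider:mod}) to cut the problem down to a finite candidate set, then verifying each surviving candidate by direct computation. The direct computations themselves rest on \cref{lem:rec:Sabc}, which reduces $X_{S(a,b,1)}$ to a signed sum of products of path chromatic symmetric functions, together with \cref{prop:Wolfe} (or Russell's program) for extracting the $e$-coefficients.

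Concretely, for $b=2$ and $b=8$ we have $b\equiv2\pmod3$, so \cref{thm:spider:ab1}\cref{itm:spider:ab1:b=2mod3} applies. It immediately narrows the candidates to $a\in\{3,6\}$ when $b=2$, and to $a\in\{9,10,12,13,15,18\}$ when $b=8$. For $b=4$ and $b=6$ we use \cref{thm:spider:ab1}\cref{itm:spider:ab1:b<>2mod3,itm:spider:ab1:ab:even} (the latter kills small even $a$), supplemented by the $r=b$ exclusion (\cref{lem:e:spider:mod} with $m=b+1$, already used inside the proof of \cref{thm:spider:ab1}\cref{itm:spider:ab1:b<>2mod3}), which rules out $a\equiv-1\pmod{b+1}$. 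A routine enumeration then yields the candidate sets $\{5,7,8,10,11,12,13,15,20\}$ for $b=4$ and exactly the claimed set for $b=6$. In particular, for $b=6$ the candidate set coincides with the stated answer, so only the positive side needs to be checked.

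It remains to (i) exclude the residual non-$e$-positive candidates $a\in\{7,11\}$ for $b=4$ and $a\in\{10,12\}$ for $b=8$, and (ii) verify $e$-positivity in all listed cases. For (i), the plan is to mimic the proof of \cref{thm:spider:ab1}: for each such $(a,b)$ we choose a partition $\mu$ tailored to the residue class of $a$ modulo $b+1$ (or modulo $3$, given that these $b$'s are small), apply \cref{rec:spider:ab1} from the proof of \cref{thm:spider:ab1}, and use \cref{prop:Wolfe} to verify that $[e_\mu]X_{S(a,b,1)}<0$. For (ii), each listed case is small enough that \cref{lem:rec:Sabc} together with Russell's program yields the full $e$-expansion and one reads off nonnegativity. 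The main obstacle is (i): the coefficient inequalities exhibited in \cref{thm:spider:ab1} become non-negative precisely on the boundary of the candidate set, so the excluded values need carefully chosen $\mu$ (most likely of the shape $(b+1)^{n+1}(r+1)$, $(b+2)(b+1)^n$, or a $3$-rich partition in the spirit of \cref{lem:2odds}) rather than the generic ones used in \cref{thm:spider:ab1}.
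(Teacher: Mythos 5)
Your proposal is correct and follows essentially the same route as the paper: \cref{thm:spider:ab1} (supplemented, as in its own proof, by \cref{lem:e:spider:mod}) cuts each class $b\in\{2,4,6,8\}$ down to a finite candidate list, after which everything is settled by direct computation of $X_{S(a,b,1)}$ via \cref{rec:spider:ab1} and the path coefficients. One small simplification: \cref{thm:spider:ab1}\cref{itm:spider:ab1:ab:even} already rules out $a\in\{10,12\}$ for $b=8$, since these even values satisfy $a\le b+(1+\sqrt{8b-3})/2\approx 12.4$, so no specially tailored coefficient is needed for them.
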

\begin{proof}
By \cref{thm:spider:ab1}, it suffices to check the $e$-positivity 
of a few number of spiders for each spider classes $S(a,b,1)$ with $b\in\{2,4,6,8\}$.
One may compute the chromatic symmetric function of these spiders straightforwardly by using \cref{rec:spider:ab1,prop:gf:path:cycle}.
\end{proof}

In view of the sporadic case $a=b^2+b$ in \cref{itm:spider:ab1:b<>2mod3},
we propose \cref{conj:ab1:b=even:b<>2},
which is checked to be true for $b\in\{4,6\}$.

\begin{conjecture}\label{conj:ab1:b=even:b<>2}
If $b$ is even and $b\not\equiv2\pmod3$,
then the spider $S(b^2+b,\,b,\,1)$ is $e$-positive.
\end{conjecture}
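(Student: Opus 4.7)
The plan is to use the recurrence from \cref{lem:rec:Sabc}, which, upon treating $G = S(b^2+b,b,1)$ as obtained by adding a pendant edge to a path (as in the proof of \cref{thm:spider:ab1}), gives
\[
X_G = e_1 X_{P_{N-1}} + X_{P_N} - X_{P_{a+1}} X_{P_{b+1}},
\]
with $a = b^2+b$, $a+1 = b^2+b+1$, and $N = b^2+2b+2 = (b+1)^2+1$. This is the sporadic situation $r=0$, $n=b$ of \cref{thm:spider:ab1}\cref{itm:spider:ab1:b<>2mod3}: at exactly this value the test coefficient $[e_{(b+2)(b+1)^n}] X_G = b^{n-2}[(b+1)^2(b-n)+1]$ evaluates to $b^{b-2}>0$ rather than being negative, so the one obstruction identified there vanishes. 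The goal is then to prove $[e_\lambda] X_G \ge 0$ for every partition $\lambda \vdash N$ by expanding each of the three summands via \cref{prop:Wolfe}.

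The positive contribution $[e_\lambda]\brk1{e_1 X_{P_{N-1}} + X_{P_N}}$ is an explicit polynomial in the multiplicities of the parts of $\lambda$, given by binomial sums from \cref{prop:Wolfe}. The negative contribution decomposes as
\[
[e_\lambda] X_{P_{a+1}} X_{P_{b+1}} = \sum_{\substack{\lambda = \mu \cup \nu \\ |\mu|=a+1,\ |\nu|=b+1}} [e_\mu] X_{P_{a+1}} \cdot [e_\nu] X_{P_{b+1}},
\]
a sum over splittings of $\lambda$ into subpartitions of sizes $a+1$ and $b+1$. Since $\gcd(a+1,b+1) = \gcd(b^2+b+1, b+1) = 1$, these splittings are arithmetically rigid: each subpartition $\nu \vdash b+1$ of $\lambda$ determines its complement $\mu$ uniquely. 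I would classify partitions $\lambda$ according to the collection of their subpartitions of total size $b+1$, handling first the classes with at most one admissible splitting (where the binomial factors in the positive contribution should already dominate term by term), and then the remaining classes by grouping positive contributions so as to absorb the negative terms.

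The principal obstacle is producing a single argument valid for all $b$. While direct coefficient extraction is feasible for each fixed $b$ (and the conjecture is checked for $b\in\{4,6\}$), a uniform proof appears to require either a closed-form simplification of $[e_\lambda] X_G$ via new identities among path coefficients derived from the generating function $E(z)/F(z)$ in \cref{prop:gf:path:cycle}, or a bijective/combinatorial construction of an $e$-positive expansion directly from stable partitions of $G$. The rigid arithmetic $\gcd(b^2+b+1,b+1)=1$ together with the identity $N = (b+1)^2+1$ strongly suggest that this sporadic case admits a clean structural description; identifying it, and in particular explaining combinatorially why the pendant edge restores $e$-positivity precisely at $a = b^2+b$ (and not at any larger multiple of $b+1$), is expected to be the crux of the proof.
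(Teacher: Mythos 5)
The statement you were asked about is \cref{conj:ab1:b=even:b<>2}, which the paper leaves as a conjecture: the authors offer no proof, only the remark that it has been checked for $b\in\{4,6\}$. So there is no proof in the paper to compare your attempt against, and the only way your submission could succeed is by actually supplying a complete argument. It does not. What you have written is a plan: you set up the recurrence from \cref{lem:rec:Sabc}, observe (correctly) that at $a=b^2+b$ the single test coefficient used in \cref{thm:spider:ab1}\cref{itm:spider:ab1:b<>2mod3} evaluates to $b^{b-2}>0$, reduce the problem to proving $[e_\lambda]X_G\ge0$ for every $\lambda\vdash N$, and then stop, conceding in your last paragraph that a uniform argument is missing. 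Nonnegativity of one coefficient only says that the particular obstruction exploited in the paper's negativity proof vanishes; it is nowhere near $e$-positivity, which requires controlling all coefficients simultaneously.

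Two specific weaknesses in the plan itself. First, the ``arithmetic rigidity'' you draw from $\gcd(b^2+b+1,\,b+1)=1$ is vacuous: for any fixed $\lambda$ and any sub-multiset $\nu\subseteq\lambda$ with $\abs{\nu}=b+1$, the complement $\mu=\lambda\setminus\nu$ is uniquely determined regardless of any gcd condition, and the genuine difficulty --- that $\lambda$ may admit many such sub-partitions $\nu$, each contributing a negative term $[e_\mu]X_{P_{a+1}}[e_\nu]X_{P_{b+1}}$ that must be absorbed --- is untouched. Second, the claimed term-by-term dominance in the ``at most one splitting'' case is asserted without any computation; note that when $\lambda$ has no part equal to $1$ the term $e_1X_{P_{N-1}}$ contributes nothing, so you would need $[e_\lambda]X_{P_N}\ge[e_\lambda]X_{P_{a+1}}X_{P_{b+1}}$ outright, and the computations in \cref{thm:spider:ab1} and \cref{sec:spider:ab2} show that differences of exactly this shape are routinely negative for nearby parameter values, so such dominance cannot be taken for granted and is precisely where the real work lies. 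As it stands, your proposal is a reasonable research outline but not a proof, and the conjecture remains open.
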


We further conjecture that all spiders $S(a,b,1)$
that have been shown not to be $e$-positive in \cref{thm:spider:ab1}
are Schur positive.

\begin{conjecture}\label{conj:spider:ab1}
Suppose that $a\ge b\ge 2$ and $b$ is even.
The spider $S(a,b,1)$ is Schur positive if one of the following is true.
\begin{enumerate}
\item\label[itm]{itm:conj:ab1:b=2:a=0}
$b\equiv2\pmod3$, $a\equiv0\pmod3$ and $a\ge 2b+5$.
\item\label[itm]{itm:conj:ab1:b=2:a=1}
$b\equiv2\pmod3$, $a\equiv1\pmod3$ and $a\ge 2b$.
\item\label[itm]{itm:conj:ab1:b=2:a=2}
$b\equiv2\pmod3$, $a\equiv2\pmod3$ and $a\ge b$.
\item\label[itm]{itm:conj:ab1:b<>2}
$b\not\equiv2\pmod3$ and $a\ge b^2$.
\item\label[itm]{itm:conj:ab1:a=even}
$a$ is even and $b\le a\le b+(1+\sqrt{8b-3})/2$.
\end{enumerate}
\end{conjecture}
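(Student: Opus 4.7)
The plan is to combine the combinatorial formula for Schur coefficients from \cref{thm:s-in-X} with the three-term decomposition
\[
X_{S(a,b,1)} \;=\; X_{P_n} \,+\, e_1 X_{P_{n-1}} \,-\, X_{P_{a+1}} X_{P_{b+1}},
\]
which specializes \cref{lem:rec:Sabc} at $c=1$ (here $n=a+b+2$). Since paths are $e$-positive and hence Schur positive, any obstruction to Schur positivity of $S(a,b,1)$ must come from the subtracted product $X_{P_{a+1}} X_{P_{b+1}}$; the task is to show that the positive pieces always dominate at every partition $\lambda \vdash n$.

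My first step would be to pin down the combinatorics of the stable partitions of $S(a,b,1)$. Labeling the central vertex $v_0$ and the three legs of lengths $a$, $b$, and $1$, I would derive a closed formula for the counts $a_\kappa$ (and hence $\tilde a_\kappa$) by conditioning on the block containing $v_0$; this essentially convolves the stable-partition statistics of the three legs, reducing the count to generating-function manipulations in the spirit of \cref{prop:gf:path:cycle}. With $\tilde a_\kappa$ in hand, I would, for each $\lambda \vdash n$, apply \cref{thm:s-in-X} and sort the special rim hook tabloids $T \in \mathcal{T}_\lambda$ by the sign $(-1)^{|W_T|}$.

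The central combinatorial task is then to construct a sign-reversing, weight-dominating pairing between negatively and positively signed tabloids, arranged so that the residue hypotheses on $a$ and $b$ modulo~$3$ (in cases \cref{itm:conj:ab1:b=2:a=0,itm:conj:ab1:b=2:a=1,itm:conj:ab1:b=2:a=2}) or the size hypothesis $a \ge b^2$ (in case \cref{itm:conj:ab1:b<>2}) guarantee that the pairing is defined and weight-preserving at every $\lambda$. I expect the involution to split naturally according to whether a rim hook ``crosses'' the central vertex of the spider, mirroring the three-term decomposition above: rim hooks that stay within a single leg contribute to the $X_{P_{a+1}} X_{P_{b+1}}$ summand, while those spanning $v_0$ correspond to the $e_1 X_{P_{n-1}}$ term and should absorb the sign changes.

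The main obstacle will be case \cref{itm:conj:ab1:a=even}, where the admissible interval $b \le a \le b + (1+\sqrt{8b-3})/2$ is only $O(\sqrt{b})$ wide; no asymptotic cushion is available and one must exploit the even-even parity of $(a,b)$ very precisely. For this case I would instead adapt \citet{Kal15}'s hook-shape argument, noting that the negativity in \cref{itm:spider:ab1:ab:even} was detected via the single $e$-coefficient indexed by $3^2 2^{N/2-3}$. This suggests that only rim hook tabloids with small hook content are genuine obstructions, which should be controllable by direct Kostka-type bookkeeping rather than a global involution. The bookkeeping in cases \cref{itm:conj:ab1:b=2:a=0,itm:conj:ab1:b=2:a=1,itm:conj:ab1:b=2:a=2} will in turn require tracking the $3$-core structure of $\lambda$, since the residue conditions on $a$ interact with which shapes $\lambda$ admit rim hooks of length $3$.
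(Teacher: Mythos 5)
There is a fundamental issue to flag first: the statement you are trying to prove is \cref{conj:spider:ab1}, which the paper itself leaves as an open conjecture. The paper offers no proof at all, only computational verification for the first few values of $(a,b)$ (e.g.\ $b=2$, $a\le 30$; $b=4$, $a\le 25$; $b\le 12$ in the even--even case). So there is no ``paper proof'' to match, and your text cannot be assessed as a correct alternative proof unless it actually closes the conjecture --- which it does not. What you have written is a research plan, and its decisive steps are exactly the ones left unexecuted. The sign-reversing, weight-dominating pairing between tabloids of opposite sign is never constructed; asserting that the residue hypotheses in \cref{itm:conj:ab1:b=2:a=0,itm:conj:ab1:b=2:a=1,itm:conj:ab1:b=2:a=2} ``guarantee that the pairing is defined and weight-preserving at every $\lambda$'' is not an argument but a restatement of the conjecture. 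Likewise, no closed formula for the semi-ordered stable partition counts $\tilde a_\kappa$ of $S(a,b,1)$ is derived, and without quantitative control of these counts the claim that the positive tabloids dominate cannot even be set up.

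Two more concrete defects. First, your proposed involution is organized ``according to whether a rim hook crosses the central vertex of the spider,'' but in \cref{thm:s-in-X} the rim hooks live in the Young diagram of $\lambda$, not in the graph; the graph enters only through stable partitions of type $\kappa_T$. There is no canonical way to say a rim hook of a tabloid ``stays within a leg'' or ``spans $v_0$,'' so the proposed splitting that is supposed to mirror the decomposition $X_{S(a,b,1)}=X_{P_n}+e_1X_{P_{n-1}}-X_{P_{a+1}}X_{P_{b+1}}$ is not well defined as stated. Second, for case \cref{itm:conj:ab1:a=even} you lean on \citet{Kal15}, but that result concerns $[s_\lambda]X_G$ only for hook shapes $\lambda$; hook shapes are therefore never the obstruction, and the shapes one actually has to control (such as the two-row-plus-column shapes that appear in the paper's own Schur computations for brooms) are precisely the non-hook ones, which the hook argument says nothing about. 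The inference from ``negativity of the single $e$-coefficient at $3^22^{N/2-3}$'' to ``only tabloids with small hook content are genuine obstructions'' has no justification. In short, the proposal identifies reasonable tools (the $c=1$ specialization of \cref{lem:rec:Sabc} and \cref{thm:s-in-X}), but none of the cases is actually proved, and the statement remains a conjecture.
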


It is routine to check \cref{conj:spider:ab1} for the first few values of the pair $(a,b)$: \cref{itm:conj:ab1:b=2:a=0} is true for $b=2$ and $a\le 30$, as well as for $b=8$ and $a\le 24$;
\cref{itm:conj:ab1:b=2:a=1} is true for $b=2$ and $a\le 28$, as well as for $b=8$ and $a\le 22$; 
\cref{itm:conj:ab1:b=2:a=2} is true for $b=2$ and $a\le 29$, as well as for $b=8$ and $a\le 20$, and $b=a=14$; 
\cref{itm:conj:ab1:b<>2} is true for $b=4$ and $a\le 25$; 
\cref{itm:conj:ab1:a=even} is true for $b\le 12$. 

\begin{conjecture}\label{conj:spider:ab1:new}
Suppose that $a\ge b\ge 2$ and $b$ is even.
The spider $S(a,b,1)$ is Schur positive if one of the following is true. 
\begin{enumerate}
\item\label[itm]{itm:conj:ab1:new:b=2:a=0}
$b\equiv2\pmod3$, $a\equiv0\pmod3$ and $a\le 2b+2$.
\item\label[itm]{itm:conj:ab1:new:b=2:a=1}
$b\equiv2\pmod3$, $a\equiv1\pmod3$ and $a\le 2b-3$.
\item\label[itm]{itm:conj:ab1:new:b<>2}
$b\not\equiv2\pmod3$ and $b\le a\le b^2-1$.
\item\label[itm]{itm:conj:ab1:new:a=even}
$a$ is even and $a>b+(1+\sqrt{8b-3})/2$.
\end{enumerate}
\end{conjecture}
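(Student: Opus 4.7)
The plan is to combine the recursion \cref{rec:spider:ab1} with the combinatorial Schur formula of \cref{thm:s-in-X}. Writing $N = a+b+2$ and $G = S(a,b,1)$, \cref{rec:spider:ab1} yields
\begin{equation*}
[s_\lambda] X_G = [s_\lambda](e_1 X_{P_{N-1}}) + [s_\lambda] X_{P_N} - [s_\lambda](X_{P_{a+1}} X_{P_{b+1}}).
\end{equation*}
Since paths are $e$-positive and hence Schur positive, the first two terms are nonnegative and computable via Pieri's rule applied to the known Schur expansions of path chromatic symmetric functions. The challenge is controlling the subtractive term $[s_\lambda](X_{P_{a+1}} X_{P_{b+1}})$, which by the Littlewood--Richardson rule is itself a nonnegative combination of products of Schur coefficients of paths, so the sign of $[s_\lambda] X_G$ is determined by a delicate cancellation.

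First I would derive a closed form for the semi-ordered stable partition counts $\tilde a_\kappa(G)$ by exploiting the near-path structure: $G$ arises from $P_{N-1}$ by attaching a pendant edge at a designated internal vertex, so stable partitions of $G$ correspond to those of $P_{N-1}$ subject to a local constraint at that vertex. Applying \cref{thm:s-in-X} directly then expresses $[s_\lambda] X_G$ as a signed sum over special rim hook tabloids $T \in \mathcal{T}_\lambda$. I would then attempt to construct a sign-reversing involution on the tabloids of negative sign, pairing each with a positive-sign tabloid of equal $\tilde a_{\kappa_T}(G)$. The arithmetic hypotheses indicate where the involution should focus: rim hooks of length $3$ drive \cref{itm:conj:ab1:new:b=2:a=0,itm:conj:ab1:new:b=2:a=1}; rim hooks of length $b+1$ drive \cref{itm:conj:ab1:new:b<>2}; while \cref{itm:conj:ab1:new:a=even} corresponds to the quadratic threshold $a^2 - (2b+1)a + (b^2 - b + 1) \ge 0$ already appearing in the proof of \cref{thm:spider:ab1}\cref{itm:spider:ab1:ab:even}.

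The main obstacle is constructing involutions that work uniformly across all $\lambda \vdash N$. Previous Schur positivity proofs for near-path graphs rely on fine stable-partition structure tailored to each graph family, and the degree-$3$ vertex of $S(a,b,1)$ obstructs a direct transplantation of these arguments. I expect \cref{itm:conj:ab1:new:a=even} to be the hardest case, since the tight arithmetic threshold suggests negativity lurks just beyond the stated bound, leaving minimal slack for a robust pairing. A realistic mitigation is a two-stage plan: first prove $e$-positivity (and hence Schur positivity) in as large a subrange as possible, reducing the Schur analysis to residual partitions $\lambda$; and second, handle those residual partitions via an ad hoc involution built from the role of the degree-$3$ vertex in the rim hook decomposition, with computer-assisted verification handling the small base cases.
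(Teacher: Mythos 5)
There is nothing in the paper to compare your argument against: the statement you are addressing is \cref{conj:spider:ab1:new}, which the paper leaves as an open conjecture, supported only by machine verification of small cases (and the verification range for \cref{itm:conj:ab1:new:a=even} is even left as a placeholder in the source). Your submission does not close that gap, because it is a plan rather than a proof. The step that would carry the entire argument --- a sign-reversing involution on the special rim hook tabloids of \cref{thm:s-in-X}, valid uniformly for every partition $\lambda\vdash N$ --- is never constructed; you only indicate where you expect it to ``focus,'' and you explicitly concede that \cref{itm:conj:ab1:new:a=even} may resist the pairing because the threshold $a^2-(2b+1)a+(b^2-b+1)\ge 0$ leaves no slack. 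Note also the asymmetry of the task: the paper's own technique in \cref{thm:spider:ab1} (extracting a single $e$- or $s$-coefficient via \cref{rec:spider:ab1} and \cref{prop:Wolfe}) suffices to prove \emph{negativity} results, but Schur positivity requires controlling $[s_\lambda]X_G$ for \emph{all} $\lambda$, and neither the paper nor your proposal supplies a mechanism for that.

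Two further points weaken the proposed reduction. First, the ``two-stage plan'' of proving $e$-positivity on a large subrange cannot do much of the work: the paper's corollary shows, for instance, that $S(a,4,1)$ is $e$-positive only for $a\in\{5,8,10,12,13,15,20\}$, while \cref{itm:conj:ab1:new:b<>2} with $b=4$ asserts Schur positivity for every $4\le a\le 15$; so nearly all of the conjectured range must be handled directly at the Schur level. Second, writing $[s_\lambda]X_G=[s_\lambda](e_1X_{P_{N-1}})+[s_\lambda]X_{P_N}-[s_\lambda](X_{P_{a+1}}X_{P_{b+1}})$ is correct but by itself only exhibits the coefficient as a difference of two nonnegative quantities, exactly the ``delicate cancellation'' you acknowledge; without explicit formulas or monotone bounds for the path Schur coefficients (which are not provided here or in the paper), this decomposition does not advance the argument. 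As it stands, the conjecture remains open and your proposal should be regarded as a research outline, not a proof.
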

It is routine to check \cref{conj:spider:ab1:new} for the first few values of the pair $(a,b)$: \cref{itm:conj:ab1:new:b=2:a=0,itm:conj:ab1:new:b=2:a=1} are true for $b\in\{2,8\}$;
\cref{itm:conj:ab1:new:b<>2} is true for $b=4$, and for $b=6$ with $a\le 23$; 
\cref{itm:conj:ab1:new:a=even} is true for ***.

\section{The positivity of spiders $S(a,b,2)$}\label{sec:spider:ab2}

This section is devoted to the $e$-positivity and Schur positivity of spiders $S(a,b,2)$.

\begin{theorem}\label{thm:e:spider:ab2}
Let $a\ge b\ge 2$. Suppose that the spider $S(a,b,2)$ is $e$-positive.
Let $r_a$ and $r_b$ be the remainders of $a$ and $b$ modulo 3, respectively.
Then either $(r_a,r_b)=(0,1)$ or $r_b=0$.
\end{theorem}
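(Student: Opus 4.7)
The plan is to apply~\cref{lem:rec:Sabc} with $c=2$ to write, setting $n=a+b+3$,
\[
X_{S(a,b,2)}=X_{P_n}+e_1X_{P_{n-1}}+2e_2X_{P_{n-2}}-X_{P_{b+1}}X_{P_{a+2}}-X_{P_{b+2}}X_{P_{a+1}},
\]
and then, for each of the five residue pairs $(r_a,r_b)\in\{(1,1),(2,1),(0,2),(1,2),(2,2)\}$ to be excluded, to exhibit a partition $\mu\vdash n$ with $[e_\mu]X_{S(a,b,2)}<0$. The choice of $\mu$ is dictated by $n\bmod 3$: for the pairs with $n\equiv 0$ (namely $(2,1)$ and $(1,2)$) take $\mu=3^{n/3}$; for the pairs with $n\equiv 2$ (namely $(1,1)$ and $(0,2)$) take $\mu=3^{(n-2)/3}2$; for the remaining pair $(2,2)$, where $n\equiv 1$, take $\mu=3^{(n-4)/3}4$. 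Every path-coefficient that arises can then be evaluated via~\cref{prop:Wolfe}.

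The case $n\equiv 0\pmod 3$ is the cleanest. Here the terms $e_1X_{P_{n-1}}$ and $2e_2X_{P_{n-2}}$ contribute $0$ to $[e_{3^{n/3}}]$, since any product $e_1 e_\nu$ or $e_2 e_\nu$ carries a part equal to $1$ or $2$ that $3^{n/3}$ lacks. The residue conditions force exactly one of the two cross products to contribute: $X_{P_{b+2}}X_{P_{a+1}}$ in case $(2,1)$ (both factor orders divisible by $3$) and $X_{P_{b+1}}X_{P_{a+2}}$ in case $(1,2)$, with only one admissible splitting in each instance. Using the specialisation $[e_{3^k}]X_{P_{3k}}=3\cdot 2^{k-1}$ of~\cref{prop:Wolfe}, the surviving product contributes $9\cdot 2^{(n-6)/3}$, while $[e_{3^{n/3}}]X_{P_n}=3\cdot 2^{(n-3)/3}$, so
\[
[e_{3^{n/3}}]X_{S(a,b,2)}=3\cdot 2^{(n-3)/3}-9\cdot 2^{(n-6)/3}=-3\cdot 2^{(n-6)/3}<0.
\]

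In each remaining case the strategy is identical but with heavier bookkeeping. For $n\equiv 2$ the distinguished part $2$ of $\mu$ may attach to either factor of each cross product, and the residue classes of $b+i$ and $a+3-i$ determine which placements are admissible; moreover the term $2e_2X_{P_{n-2}}$ now contributes $6\cdot 2^{(n-5)/3}$. Similarly, for case $(2,2)$ the distinguished part $4$ of $\mu=3^{(n-4)/3}4$ can land on either factor of each product. After assembling the handful of admissible contributions via~\cref{prop:Wolfe} and substituting $a+b=n-3$, the coefficient $[e_\mu]X_{S(a,b,2)}$ collapses in each case to a negative multiple of a power of $2$, uniformly in $a\ge b\ge 2$ within the relevant residue class.

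The main obstacle is precisely this bookkeeping when $n\not\equiv 0\pmod 3$: one must enumerate the few admissible decompositions of $\mu$ between the factors of each cross product (which, in cases $(1,1)$ and $(2,2)$, are two in number, and in case $(0,2)$ just one), apply the two-term formula of~\cref{prop:Wolfe} to each of the five path-coefficients that appear, and verify that the resulting combination, a sum of several terms of the form $(An+B)\cdot 2^{(n-k)/3}$, simplifies to a strictly negative quantity. Given the structural symmetry between the two cross products, I expect the cases $(1,1)$, $(0,2)$ and $(1,2)$, $(2,1)$ to be parallel in pairs, with $(2,2)$ slightly more delicate because both the block $3^{(n-4)/3}$ and the lone part $4$ need to be distributed.
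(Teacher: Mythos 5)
Your reduction via \cref{lem:rec:Sabc} and the fully worked case $n\equiv 0\pmod 3$ are correct, and your direct extraction actually handles more than strictly necessary: the pairs $(2,1)$, $(1,2)$, $(2,2)$ are dismissed at once in the paper by \cref{lem:e:spider:mod} with $m=3$ (which forces $r_a+r_b\le 2$), so only $(1,1)$ and $(0,2)$ remain there. Moreover your choice $\mu=3^{(n-2)/3}2$ for the pair $(1,1)$ does work uniformly: both cross products contribute one splitting each, and the coefficient collapses to $2^{(n-8)/3}(7-n)$, negative because $a\equiv b\equiv 1\pmod 3$ and $a\ge b\ge 2$ force $n\ge 11$; this is even cleaner than the paper's choice $5\,3^{(N-5)/3}$, which leaves the exceptional spider $S(4,4,2)$ to be eliminated separately via \cref{thm:lambda1:d:n}. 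Similarly your choice $3^{(n-4)/3}4$ for $(2,2)$ gives $2^{(n-10)/3}(7-3n)<0$.

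The genuine gap is the pair $(r_a,r_b)=(0,2)$, and it is precisely where the symmetry you invoke breaks down. Here only $X_{P_{b+1}}X_{P_{a+2}}$ admits a splitting of $\mu=3^{(n-2)/3}2$ (the part $2$ must go to the factor of order $a+2$), while $X_{P_{b+2}}X_{P_{a+1}}$ contributes nothing since $b+2\equiv a+1\equiv 1\pmod 3$; so only one cross product is subtracted instead of two. Carrying out your bookkeeping with \cref{prop:Wolfe} gives
\[
[e_{3^{(n-2)/3}2}]X_{S(a,b,2)}=2^{(n-8)/3}\,(2b-a+10),
\]
which is \emph{positive} whenever $a\le 2b+10$; for instance for $S(3,2,2)$ it equals $20+12-21=11$. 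Thus this coefficient yields no contradiction on most of the residue class, contrary to your claim of uniform negativity, and the case $(0,2)$ is not parallel to $(1,1)$ as you expected. A different partition is required: the paper uses $\lambda=4^2 3^{(N-8)/3}$, and even then the resulting quadratic $-3a^2+(11-6b)a+6b^2-4b-18$ is shown negative only after importing the bound $a\ge b+4$ from \cref{thm:lambda1:d:n}. As written, your plan does not close this case.
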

\begin{proof}
Write $G=S(a,b,2)$. 
By \cref{lem:e:spider:mod}, we find $r_a+r_b\le 2$.
Before proceeding according to the values of $r_a$ and $r_b$,
we need a recurrence to compute $X_G$. 
The order of the graph $G$, denoted $N$, is $a+b+3$.
For any partition $\lambda\vdash N$ such that every part of $\lambda$ is at least $3$, we can extract the coefficient of $e_\lambda$ by \cref{lem:rec:Sabc} and obtain
\begin{equation}\label{pf:rec:spider:ab2}
[e_\lambda]X_G
=[e_\lambda]X_{P_N}-[e_\lambda]X_{P_{a+1}}X_{P_{b+2}}-[e_\lambda]X_{P_{a+2}}X_{P_{b+1}}.
\end{equation}

If $r_a=r_b=1$, then $a,b\ge 4$ and $N=a+b+3\ge 11$. 
Setting $\lambda=53^{(N-5)/3}$ in \cref{pf:rec:spider:ab2} and using \cref{prop:Wolfe} we can deduce that 
\begin{align*}
&[e_{53^{(N-5)/3}}]X_G\\
=\ &[e_{53^{(N-5)/3}}]X_{P_N}
-[e_{53^{(a-4)/3}}]X_{P_{a+1}}[e_{3^{(b+2)/3}}]X_{P_{b+2}}
-[e_{3^{(a+2)/3}}]X_{P_{a+2}}[e_{53^{(a-4)/3}}]X_{P_{b+1}}\\
=\ &2^{(N-5)/3}(13-N).
\end{align*}
It is negative unless $N\le 13$. Suppose that $N\le 13$.
Since $a\ge b\ge 4$ and $N=a+b+3$,
we find $a=b=4$. In this case, the graph $G$ is $S(4,4,2)$.
It is not $e$-positive by \cref{thm:lambda1:d:n}.

If $(r_a,r_b)=(0,2)$, then $a\ge b+4$ by \cref{thm:lambda1:d:n},
and $N=a+b+3\ge 11$. 
Setting $\lambda=4^23^{(N-8)/3}$ in \cref{pf:rec:spider:ab2} and using \cref{prop:Wolfe} we can deduce that 
\begin{align}
&[e_{4^23^{(N-8)/3}}]X_G\notag\\
=\ &[e_{4^23^{(N-8)/3}}]X_{P_N}
-[e_{43^{(a-3)/3}}]X_{P_{a+1}}[e_{43^{(b-2)/3}}]X_{P_{b+2}}
-[e_{4^23^{(a-6)/3}}]X_{P_{a+2}}[e_{3^{(b+1)/3}}]X_{P_{b+1}}\notag\\
=\ &2^{(N-17)/3}\brk1{-3a^2+(11-6b)a+6b^2-4b-18}\label{pf:e:spider:a0b2}\\
=\ &2^{(N-17)/3}\brk1{-3b^2-12tb-41b-3t^2-13t-22},\notag
\end{align}
where $t=a-b-4\ge 0$. It is negative.
This completes the proof.
\end{proof}

For the remaining possible $e$-positive spiders $S(a,b,2)$,
we first give an upper bound of $a$ in terms of $b$ in \cref{thm:ab2:ub},
for which we need \cref{lem:k:k+1}.

\begin{lemma}\label{lem:k:k+1}
Let $k\ge 2$.
The set of positive integers $n$ for which
the equation $n=xk+y(k+1)$ has a solution $(x,y)\in\mathbb{N}^2$ is
\[
\{qk+r\colon 1\le q\le k-2,\,0\le r\le q\}\cup\{n\colon n\ge k(k-1)\}.
\]
\end{lemma}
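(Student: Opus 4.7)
The plan is to reparametrize the equation $n = xk + y(k+1)$ via the substitution $q = x+y$ and $r = y$, turning it into $n = qk + r$ with integers satisfying $q \geq r \geq 0$ and $q \geq 1$. This substitution is a bijection between $\{(x,y) \in \mathbb{N}^2 : x+y \geq 1\}$ and $\{(q,r) \in \mathbb{N}^2 : q \geq \max(r,1)\}$, with inverse $(x,y) = (q-r,\, r)$. Hence the problem reduces to characterizing the positive integers $n$ that can be written as $n = qk + r$ with $q \geq r \geq 0$ (and necessarily $q \geq 1$).

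Next I would invoke Euclidean division: write $n = Qk + R$ with $Q \geq 0$ and $0 \leq R \leq k-1$. The key claim is that $n$ is representable in the desired form if and only if $R \leq Q$. Sufficiency is immediate by taking $(q,r) = (Q,R)$. For necessity, given any representation $n = qk + r$ with $q \geq r \geq 0$, decompose $r = sk + R'$ with $s \geq 0$ and $0 \leq R' < k$; then $n = (q+s)k + R'$, and uniqueness of the Euclidean remainder forces $Q = q+s$ and $R = R'$, whence $R = R' \leq r \leq q \leq q+s = Q$.

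To conclude, I would translate the criterion $R \leq Q$ back into a description of $n$: the criterion \emph{fails} precisely when $Q < R \leq k-1$, which forces $Q \in \{0, 1, \dots, k-2\}$. Thus $n$ is representable if and only if either $Q \geq k-1$ (equivalently $n \geq k(k-1)$) or $1 \leq Q \leq k-2$ with $0 \leq R \leq Q$, which is exactly the stated set. I do not anticipate any real obstacle; the argument is elementary and the only mild trick is spotting the substitution in the first step that converts the two-variable non-negative representation into a one-variable division condition.
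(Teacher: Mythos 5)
Your proof is correct and follows essentially the same route as the paper's: the substitution $(q,r)=(x+y,\,y)$ reducing the problem to $n=qk+r$ with $q\ge r\ge 0$, followed by comparison with the Euclidean division $n=Qk+R$. You merely spell out the criterion $R\le Q$ in slightly more detail than the paper does, but the argument is the same.
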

\begin{proof}
The desired set is 
\[
\{xk+y(k+1)\colon x,y\in\mathbb N\}
=\{(x+y)k+y\colon x,y\in\mathbb N\}
=\{qk+r\colon q\ge r\ge 0,\,q\ge 1\}.
\]
Suppose that $n=qk+r$, where $q\in\mathbb{N}$ and $0\le r\le k-1$.
If $n\ge k(k-1)$, then $(x,y)=(q-r,\,r)$ is a solution in $\mathbb N^2$ 
since $q\ge k-1\ge r$.
Otherwise $n<k(k-1)$ and 
\[
n\in\{qk+r\colon 1\le q\le k-2,\,0\le r\le q\}.
\]
This completes the proof.
\end{proof}

\begin{theorem}\label{thm:ab2:ub}
If the spider $S(a,b,2)$ is $e$-positive,
then 
\[
a\in\bigcup_{q=1}^{b-2}\{m\in\mathbb N\colon (b+2)q\le m\le (b+1)q+b-2\}.
\]
Moreover, if $(r_a,r_b)=(0,1)$, 
where $r_a$ and $r_b$ are the remainders of $a$ and $b$ modulo 3 respectively,
then $a\le 2b+4$.
\end{theorem}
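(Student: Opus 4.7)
The plan is to prove the two assertions separately. For the main containment, I intend to apply Wolfgang's criterion (\cref{thm:Wolfgang}) to rule out all partition types whose every part is $b+1$ or $b+2$, and then translate the resulting arithmetic condition via \cref{lem:k:k+1}. For the sharpened bound under the residue hypothesis, I will extract a carefully chosen $e$-coefficient from the decomposition \cref{pf:rec:spider:ab2} using \cref{prop:Wolfe} and show negativity for $a\ge 2b+5$.

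First, for the containment, I would argue that $S(a,b,2)$ admits no connected partition into blocks of size only $b+1$ or $b+2$. Writing $v_0$ for the central vertex and $B_0$ for its block, $B_0$ is a connected sub-spider extending $a_0$, $b_0$, $c_0$ vertices along the three legs of lengths $a$, $b$, $2$ respectively. The leftover $b-b_0$ vertices of the $b$-leg must decompose into consecutive segments of size $b+1$ or $b+2$; since $b-b_0\le b<b+1$, one is forced to take $b_0=b$, and the analogous reasoning forces $c_0=2$. Hence $|B_0|=a_0+b+3\ge b+3>b+2$, a contradiction. By Wolfgang's criterion, $e$-positivity of $S(a,b,2)$ therefore precludes any nontrivial representation $N=x(b+1)+y(b+2)$ with $(x,y)\in\mathbb N^2$. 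Writing $N=q(b+1)+r$ with $0\le r\le b$, \cref{lem:k:k+1} (applied with $k=b+1$) forces $N<b(b+1)$ together with $r>q\ge 1$. Substituting $a=N-b-3$ and re-indexing by $Q=q-1$ yields precisely the claimed union.

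For the moreover clause, I would extract the coefficient of $e_\lambda$ where $\lambda=4\cdot 3^{(N-4)/3}$, which is a valid partition since $N\equiv 1\pmod 3$ under the residue hypothesis. By \cref{pf:rec:spider:ab2},
\[
[e_\lambda]X_G=[e_\lambda]X_{P_N}-[e_\lambda]X_{P_{a+1}}X_{P_{b+2}}-[e_\lambda]X_{P_{a+2}}X_{P_{b+1}}.
\]
The key residue observation is that the only part of $\lambda$ not divisible by $3$ is the single $4$, so any split $\lambda=\mu\cup\nu$ with prescribed $|\mu|$ and $|\nu|$ is uniquely determined, if it exists at all, by how the $4$ is distributed. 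For $a\equiv0$ and $b\equiv1\pmod 3$, exactly one split contributes to $X_{P_{a+1}}X_{P_{b+2}}$, namely $\mu=4\cdot 3^{(a-3)/3}$ and $\nu=3^{(b+2)/3}$, whereas $X_{P_{a+2}}X_{P_{b+1}}$ contributes nothing. Evaluating the three surviving coefficients via \cref{prop:Wolfe} and combining with the identity $N=a+b+3$ should collapse the expression to
\[
[e_\lambda]X_G=2^{(N-10)/3}(6b-3a+13),
\]
which is negative whenever $a\ge 2b+5$, so $e$-positivity forces $a\le 2b+4$.

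The main obstacle is the bookkeeping required to evaluate the three $e$-coefficients via \cref{prop:Wolfe}, each involving a multinomial coefficient together with a correction sum indexed by the parts of the partition; the cancellations need to be tracked carefully to produce the clean linear expression above. The rest of the argument, including the connected-partition analysis for the main containment and the translation through \cref{lem:k:k+1}, is purely combinatorial and should proceed without difficulty.
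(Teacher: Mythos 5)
Your proposal is correct and takes essentially the same route as the paper: \cref{thm:Wolfgang} together with \cref{lem:k:k+1} (applied with $k=b+1$) for the containment, and extraction of the coefficient of $e_{43^{(N-4)/3}}$ from \cref{pf:rec:spider:ab2} via \cref{prop:Wolfe} for the bound $a\le 2b+4$, with your expression $2^{(N-10)/3}(6b-3a+13)$ matching the paper's. The only difference is that you make explicit two steps the paper leaves implicit, namely the combinatorial argument that no connected partition into blocks of sizes $b+1$ and $b+2$ exists, and the vanishing of the $[e_\lambda]X_{P_{a+2}}X_{P_{b+1}}$ term on residue grounds; both are carried out correctly.
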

\begin{proof}
Let $G=S(a,b,2)$.
Then the number $N$ of vertices in $G$ is $a+b+3$.
Suppose that $N$ has a partition $(b+2)^y(b+1)^x$ for some integers $x,y\in\mathbb N$.
By easy combinatorial arguments, 
the spider $S(a,b,2)$ does not contain a connected partition whose blocks are of sizes $b+1$ or $b+2$, contradicting \cref{thm:Wolfgang}.
Therefore, the equation $N=x(b+1)+y(b+2)$ has no solution $(x,y)\in\mathbb N^2$.
Taking the complement of the set in \cref{lem:k:k+1}, we obtain the desired range of $a$.

If $(r_a,r_b)=(0,1)$, then $b\ge 4$, $a\ge 6$ and $N\ge 13$.
Setting $\lambda=43^{(N-4)/3}$ in \cref{pf:rec:spider:ab2} and using \cref{prop:Wolfe} we can deduce that 
\begin{align*}
[e_{43^{(N-4)/3}}]X_G
&=[e_{43^{(N-4)/3}}]X_{P_N}
-[e_{43^{(a-3)/3}}]X_{P_{a+1}}[e_{3^{(b+2)/3}}]X_{P_{b+2}}\\
&=2^{(N-10)/3}(-3a+6b+13).
\end{align*}
Since $[e_{43^{(N-4)/3}}]X_G\ge 0$, we find $a\le 2b+4$.
\end{proof}

\begin{corollary}
The spiders $S(a,b,2)$ with $b\le11$ satisfy the following.
\begin{enumerate}
\item
No spider $S(a,b,2)$ with $b\equiv2\pmod3$ is $e$-positive.
\item
When $b\in\{3,6,7\}$,
the spider $S(a,b,2)$ is $e$-positive if and only if $a=b+2$.
\item
The spider $S(a,4,2)$ is $e$-positive if and only if $a\in\{6,12\}$.
\item
No spider $S(a,9,2)$ or $S(a,10,2)$ is $e$-positive.
\end{enumerate}
\end{corollary}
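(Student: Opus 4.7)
The plan is a reduction to a short finite list of candidates for each $b$, followed by case-by-case disposal. Part~(1) is immediate from \cref{thm:e:spider:ab2}: for $b\in\{2,5,8,11\}$ we have $r_b=2$, so neither alternative $r_b=0$ nor $(r_a,r_b)=(0,1)$ can hold. For the remaining values $b\in\{3,4,6,7,9,10\}$ I intersect the residue restriction of \cref{thm:e:spider:ab2}, the explicit range of \cref{thm:ab2:ub}, and the bound $a\ge b+2$ (derived from $a\ge\lfloor(a+b+3)/2\rfloor$ in \cref{thm:lambda1:d:n}). Short arithmetic then yields candidate sets $\{5\}$ for $b=3$, $\{6,12\}$ for $b=4$, $\{8,9,10,11,16,17,18,24,25,32\}$ for $b=6$, $\{9,12,18\}$ for $b=7$, a union of seven short arithmetic blocks inside $[11,77]$ for $b=9$, and $\{12,15,18,24\}$ for $b=10$.

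The $e$-positivity of the predicted positive candidates $S(5,3,2)$, $S(6,4,2)$, $S(8,6,2)$, $S(9,7,2)$ and $S(12,4,2)$ will be verified by direct computation of the chromatic symmetric function through \cref{lem:rec:Sabc,prop:gf:path:cycle}; this is exactly what Russell's program, cited in the introduction, automates.

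For every remaining candidate I will select a partition $\lambda\vdash N=a+b+3$ and extract
\[
[e_\lambda]X_{S(a,b,2)}=[e_\lambda]X_{P_N}-[e_\lambda]X_{P_{a+1}}X_{P_{b+2}}-[e_\lambda]X_{P_{a+2}}X_{P_{b+1}}
\]
from \cref{pf:rec:spider:ab2}, evaluating the path coefficients with \cref{prop:Wolfe} and verifying negativity. The templates $43^{(N-4)/3}$, $4^23^{(N-8)/3}$ and $53^{(N-5)/3}$ that appeared in the proofs of \cref{thm:e:spider:ab2,thm:ab2:ub} are the natural starting points, and in each of the small ranges at hand the extraction collapses to a low-degree polynomial in $a$ whose sign can be checked directly. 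For top-block candidates such as $a\in\{24,25,32\}$ when $b=6$, I may alternatively invoke \cref{lem:e:spider:mod} with $m\in\{4,5\}$ to obtain a quicker contradiction without a full coefficient extraction.

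The main obstacle is the $b=9$ case, whose candidates spread across seven arithmetic blocks; no single template appears to cover all of them. I plan to treat each block separately, designing $\lambda$ so that the residue of its part sum matches $N\pmod 3$ while keeping the part multiplicities small enough for \cref{prop:Wolfe} to apply cleanly. The real difficulty is bookkeeping rather than insight: each block demands its own polynomial sign check, and isolated corner candidates such as $a=77$ may require a dedicated argument, possibly invoking \cref{lem:e:spider:mod} with $m\in\{b,b+1,b+2\}$ rather than an $e$-coefficient extraction.
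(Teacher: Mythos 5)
Your reduction step is exactly the paper's: intersecting the residue condition of \cref{thm:e:spider:ab2} with the range of \cref{thm:ab2:ub} (and $a\ge b+2$ from \cref{thm:lambda1:d:n}) gives the same finite candidate lists you describe, and the paper likewise settles the positive cases and the small surviving candidates ``by direct computation.'' So in outline the two arguments coincide.

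The gap is in how you propose to kill the \emph{large} surviving candidates, which is precisely where the paper's proof does its actual work. Your named tools demonstrably fail there. For $b=9$ and $a\equiv2\pmod3$ (so $a\in\{14,17,23,26,35,44,47,56,77\}$), the extraction with $\lambda=4^23^{(N-8)/3}$ gives $2^{(N-17)/3}\,(6a^2-58a-162)$, which is \emph{positive} for all $a\ge14$, and the extraction with $\lambda=5\,3^{(N-5)/3}$ evaluates to $2^{(a-2)/3}(16a-52)>0$, while $4\,3^{(N-4)/3}$ is not even a partition of $N$ in this residue class; analogous failures occur for $a\equiv0,1\pmod3$ once $a$ is moderately large (the mod-$3$ shapes with parts $3,4,5$ only yield the \emph{lower} bounds of \cref{thm:e:spider:ab22}, not upper bounds). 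Your fallback via \cref{lem:e:spider:mod} is also vacuous here: for $S(a,9,2)$ and $m\in\{9,10,11\}$ one always has $R_m<2m$ on the candidate set, and the leg of length $9$ (or the part $2$) supplies an $r_i\ge r$, so no contradiction arises; the same happens for your suggested $m\in\{4,5\}$ at the $b=6$ top-block candidates $a\in\{24,25,32\}$. The paper closes these cases by abandoning mod-$3$ shapes altogether and extracting coefficients keyed to $a\bmod 5$, namely $[e_{6^35^{(a-6)/5}}]X_{S(a,9,2)}$ (negative for $a\ge26$, $a\equiv1\pmod5$) and $[e_{5^{(a+8)/5}4}]X_{S(a,9,2)}$ (negative for $a\ge37$, $a\equiv2\pmod5$), which dispose of $a\in\{56,57,66,67,77\}$ and several mid-range candidates, leaving only graphs small enough for software. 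To complete your plan you would need to produce such non--mod-$3$ shapes (or commit to full symmetric-function computations for trees with up to $89$ vertices), so as it stands the $b=9$ case, and the largest $b=6$ candidates, are not actually covered.
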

\begin{proof}
By direct computation with the aid of \cref{thm:e:spider:ab2,thm:ab2:ub}.
We take $S(a,9,2)$ with $a\in\{56,57,66,67,77\}$ for example.
When $a\equiv 1\pmod5$ and $a\ge 11$,
\begin{align*}
[e_{6^35^{(a-6)/5}}]X_{S(a,9,2)}
&=[e_{6^35^{(a-6)/5}}]X_{P_{a+12}}
-[e_{5^2}]X_{P_{10}}[e_{6^35^{(a-16)/5}}]X_{P_{a+2}}
-[e_{65}]X_{P_{11}}[e_{6^25^{(a-11)/5}}]X_{P_{a+1}}\\
&=-\frac{2^{(2a-37)/5}}{3}\brk1{5a^3+42a^2-3533a+4206},
\end{align*}
which is negative for $a\ge 26$. In particular,
$[e_{6^35^{10}}]X_{S(56,9,2)}<0$ and $[e_{6^35^{12}}]X_{S(66,9,2)}<0$.
When $a\equiv 2\pmod5$ and $a\ge 12$,
\[
[e_{5^{(a+8)/5}4}]X_{S(a,9,2)}
=[e_{5^{(a+8)/5}4}]X_{P_{a+12}}
-[e_{5^2}]X_{P_{10}}[e_{5^{(a-2)/5}4}]X_{P_{a+2}}
=2^{(2a-4)/5}(110-3a),
\]
which is negative for $a\ge 37$. In particular,
$[e_{5^{13}4}]X_{S(57,9,2)}$, $[e_{5^{15}4}]X_{S(67,9,2)}$
and $[e_{5^{17}4}]X_{S(77,\,9,\,2)}$ are negative.
This completes the proof.
\end{proof}

When $(r_a,r_b)=(0,1)$, the upper bound $2b+4$ of $a$ is sharp 
in the sense that the spider $S(10,4,2)$ is $e$-positive.
Now we give a lower bound of $a$ for $b$ that is divisible by 3
for $e$-positive spiders $S(a,b,2)$.

\begin{theorem}\label{thm:e:spider:ab22}
Let $a\ge b\ge 12$. Suppose that $3$ divides $b$ and the spider $S(a,b,2)$ is $e$-positive. Then 
\[
a\ge \begin{cases}
3b+3,&\text{if $a\equiv0\pmod3$},\\[5pt]
\displaystyle \frac{b}{2}+\frac{9}{4}+\frac{\sqrt{12b^2-180b+565}}{4},
&\text{if $a\equiv1\pmod3$},\\[9pt]
\displaystyle \frac{b}{2}+\frac{1}{3}+\frac{\sqrt{27b^2-54b+112}}{6},
&\text{if $a\equiv2\pmod3$}.
\end{cases}
\]
\end{theorem}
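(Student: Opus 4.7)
The plan is to follow the template established in \cref{thm:e:spider:ab2,thm:ab2:ub}. Let $G = S(a,b,2)$ and $N = a + b + 3$. For each residue class of $a$ modulo $3$ I will select a carefully chosen integer partition $\lambda \vdash N$ with every part at least $3$, and apply the recurrence
\[
[e_\lambda]X_G = [e_\lambda]X_{P_N} - [e_\lambda]\bigl(X_{P_{a+1}}X_{P_{b+2}}\bigr) - [e_\lambda]\bigl(X_{P_{a+2}}X_{P_{b+1}}\bigr),
\]
which is \cref{pf:rec:spider:ab2}, together with \cref{prop:Wolfe} to evaluate each summand explicitly. The assumed $e$-positivity of $G$ forces $[e_\lambda]X_G \ge 0$, and this inequality will produce the claimed lower bound on $a$. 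Since $b \equiv 0 \pmod 3$, the admissible splits $\mu \cup \nu = \lambda$ in each path-product $X_{P_{a+i}}X_{P_{b+j}}$ are pinned down by the residues of $a+1,\,a+2,\,b+1,\,b+2$ modulo $3$.

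For the subcase $a \equiv 0 \pmod 3$ I will take $\lambda = 4^3 \cdot 3^{(N-12)/3}$; this is valid because $N \equiv 0 \pmod 3$ and $b \ge 12$ guarantees that all part multiplicities are nonnegative. Residue matching forces exactly one valid split of $\lambda$ in each of the two path-products, each carrying multinomial coefficient $\binom{3}{1,2} = 3$. Expanding via \cref{prop:Wolfe} should yield a linear polynomial in $a$ whose nonnegativity rearranges to $a \ge 3b + 3$.

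For the subcase $a \equiv 1 \pmod 3$, I plan to take a partition with four non-$3$ parts, for instance $\lambda = 4^4 \cdot 3^{(N-16)/3}$, so that the extracted coefficient acquires quadratic dependence on $a$. The residue analysis gives exactly one admissible split of $\lambda$ in $X_{P_{a+1}}X_{P_{b+2}}$ (distributing the four $4$'s as $2$-$2$) and two admissible splits in $X_{P_{a+2}}X_{P_{b+1}}$ (namely $0$-$4$ and $3$-$1$). After collecting terms via \cref{prop:Wolfe}, I expect $[e_\lambda]X_G$ to emerge as a quadratic in $a$ with discriminant $12b^2 - 180b + 565$, so that nonnegativity forces $a$ to exceed the larger root $\tfrac{b}{2} + \tfrac{9}{4} + \tfrac{\sqrt{12b^2-180b+565}}{4}$. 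For the subcase $a \equiv 2 \pmod 3$ I will take $\lambda = 4^2 \cdot 3^{(N-8)/3}$: here the residues give a unique split for $X_{P_{a+1}}X_{P_{b+2}}$ (both $4$'s on the $P_{b+2}$ side) and a single $1$-$1$ split for $X_{P_{a+2}}X_{P_{b+1}}$ with multinomial factor $2$, producing a quadratic in $a$ whose discriminant is $27b^2 - 54b + 112$ and whose larger root is $\tfrac{b}{2} + \tfrac{1}{3} + \tfrac{\sqrt{27b^2-54b+112}}{6}$.

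The main obstacle will be the bookkeeping required to show that Wolfe's formula applied to these specific partitions outputs exactly the discriminants $12b^2 - 180b + 565$ and $27b^2 - 54b + 112$. This is a routine but intricate calculation in which sign and binomial-coefficient errors are easy to make; should the initial choices of $\lambda$ fail to produce the stated discriminants exactly, a small amount of trial with nearby partitions (for example, replacing a part $4$ by a pair $\{5,3\}$, or inserting an extra $\{4,4\}$ block) should suffice to calibrate the formula. The hypothesis $b \ge 12$ will be used both to guarantee that the multiplicity $(N-16)/3$ and similar exponents are nonnegative and to ensure that the auxiliary linear factors in $b$ arising in the expansion carry the expected sign, so that the analysis does not degenerate for small $b$.
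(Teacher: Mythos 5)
The critical gap is in your case $a\equiv0\pmod3$. Extracting $[e_{4^33^{(N-12)/3}}]X_G$ via \cref{pf:rec:spider:ab2} and \cref{prop:Wolfe} does \emph{not} produce a linear polynomial in $a$ whose nonnegativity rearranges to $a\ge 3b+3$: with three parts equal to $4$, Wolfe's formula contributes terms cubic in the length, and after the cancellations the coefficient is a quadratic (in $b$, with $N=a+b+3$) whose nonnegativity only forces the weaker bound $a\ge 2b+6$. That is exactly how far this extraction gets in the paper; the bulk of the paper's proof of this case is then devoted to closing the range $2b+6\le a\le 3b$, which requires a second, much heavier extraction with $\lambda=4^63^{(N-24)/3}$. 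That choice produces a degree-six polynomial $H(a)$, and showing $H(a)<0$ on the whole interval $[2b+6,3b]$ is done by checking the signs of $H$ and $H'$ at the endpoints, then controlling the real roots of $H''$ (sign checks at $0$, $b/2$, $2b+6$ plus the intermediate value theorem) to conclude $H'$ stays negative. None of this is ``routine bookkeeping'' of the kind your plan anticipates, and no single coefficient of the form you propose is known to yield $a\ge 3b+3$ directly; the hypothesis $b\ge12$ is in fact needed precisely to make the sign checks in this second stage work.

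Your other two cases are closer but not calibrated. For $a\equiv2\pmod3$ your choice $\lambda=4^23^{(N-8)/3}$ coincides with the paper's (which simply reuses \cref{pf:e:spider:a0b2} with $a$ and $b$ exchanged), and the resulting quadratic $6a^2-(6b+4)a-3b^2+11b-18$ does have discriminant $4(27b^2-54b+112)$, so that part of your plan is sound. For $a\equiv1\pmod3$, however, the paper extracts $[e_{5^23^{(N-10)/3}}]X_G$, obtaining $\tfrac{2^{(N-10)/3}}{3}\brk1{4a^2-(4b+18)a-2b^2+54b-121}$, whose discriminant is $4(12b^2-180b+565)$; your proposed $\lambda=4^43^{(N-16)/3}$ is a different partition and there is no reason it reproduces this exact quadratic (with four non-$3$ parts you should expect quartic terms before cancellation). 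Since the theorem prescribes these exact square roots, ``trial with nearby partitions'' is not a proof step: you must either exhibit the partition that yields the stated quadratic (namely $5^23^{(N-10)/3}$) or prove a bound at least as strong, and as it stands your write-up does neither for this case and fails outright for $a\equiv0\pmod3$.
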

\begin{proof}
Let $G=S(a,b,2)$. Denote by $N$ the number of vertices in $G$. Then $N=a+b+3$. We proceed according to the remainder of $a$ modulo 3.

Suppose that $a\equiv0\pmod3$. By \cref{thm:lambda1:d:n}, 
we find $a\ge b+3\ge 15$. Thus $N\ge 30$.
Setting $\lambda=4^33^{(N-12)/3}$ in \cref{pf:rec:spider:ab2} and using \cref{prop:Wolfe} we can deduce that 
\begin{align*}
&[e_{4^33^{(N-12)/3}}]X_G\\
=\ &[e_{4^33^{(N-12)/3}}]X_{P_N}
-[e_{43^{(a-3)/3}}]X_{P_{a+1}}[e_{4^23^{(b-6)/3}}]X_{P_{b+2}}-
[e_{4^23^{(a-6)/3}}]X_{P_{a+2}}[e_{43^{(b-3)/3}}]X_{P_{b+1}}\\
=\ &2^{(N-21)/3}\brk1{(9N-87)b^2-(9N-87)(N-3)b+2N^3-35N^2+181N-306}.
\end{align*}
Consider the function 
\[
F(b)=(9N-87)b^2-(9N-87)(N-3)b+2N^3-35N^2+181N-306.
\]
Since 
$N=a+b+3\ge 2b+6$ by \cref{thm:lambda1:d:n},
we find $b\le (N-6)/2$. 
Note that $F(b)$ is decreasing for $b\le (N-3)/2$.
If $a\le 2b+3$, then $N=a+b+3\le 3b+6$, that is,
$b\ge (N-6)/3$. Since
\[
F\brk4{\frac{N-6}{3}}=-\frac{2}{3}N^2+18N-132<0,
\]
we deduce that $F(b)<0$, contradicting to the fact $[e_{4^33^{(N-12)/3}}]X_G\ge 0$. This proves that $a\ge 2b+6$. 

Below we suppose that $2b+6\le a\le 3b$. Then $a\ge 28$ and $N\ge 42$.
Setting $\lambda=4^63^{(N-24)/3}$ in \cref{pf:rec:spider:ab2} and using \cref{prop:Wolfe} we can deduce that 
\begin{align*}
&[e_{4^63^{(N-24)/3}}]X_G\\
=\ &[e_{4^63^{(N-24)/3}}]X_{P_N}
-[e_{43^{(a-3)/3}}]X_{P_{a+1}}[e_{4^53^{(b-18)/3}}]X_{P_{b+2}}
-[e_{4^43^{(a-15)/3}}]X_{P_{a+1}}[e_{4^23^{(b-6)/3}}]X_{P_{b+2}}\\
& -[e_{4^23^{(a-6)/3}}]X_{P_{a+2}}[e_{4^43^{(b-15)/3}}]X_{P_{b+1}}-
[e_{4^53^{(a-18)/3}}]X_{P_{a+2}}[e_{43^{(b-3)/3}}]X_{P_{b+1}}\\
=\ &\frac{2^{N/3-14}}{15} H(a),
\end{align*}
where
\begin{align*}
H(a)&=
2{a}^{6}-(6b+124){a}^{5}
+(-15{b}^{2}+355b+3020){a}^{4}
+(40{b}^{3}+110{b}^{2}-6640b-37440){a}^{3}\\
&\qquad
+(-15{b}^{4}+110{b}^{3}-11250{b}^{2}+125415b+124938){a}^{2}\\
&\qquad
+(-6{b}^{5}+355{b}^{4}-6640{b}^{3}+125415{b}^{2}-945468b+408564)a\\
&\qquad
+(2{b}^{6}-124{b}^{5}+3020{b}^{4}-37440{b}^{3}+124938{b}^{2}+408564b+699840).
\end{align*}
First, it is routine to check that for $b\ge 3$,
\[
H(2b+6)
=-54b^6+2646b^5-43590b^4+184770b^3+194724b^2-2243376b+2604960<0.
\]
In order to show that $H(a)<0$ in the interval $[2b+6,\,3b]$,
it suffices to show that the differential~$H'(a)$ is negative for $a\in[2b+6,\,3b]$. In fact,
\begin{align*}
H'(a)
&=12a^5(30b+620)a^4
+(-60b^2+1420b+12080)a^3
+(120b^3+330b^2-19920b-112320)a^2\\
&\qquad
+(-30b^4+220b^3-22500b^2+250830b+249876)a\\
&\qquad
+(-6b^5+355b^4-6640b^3+125415b^2-945468b+408564).
\end{align*}
It is routine to check that for $b\ge3$,
\begin{align*}
H'(2b+6)&=-162b^5+1935b^4-42240b^3+256275b^2-392256b-236628<0
\quad\text{and}\\
H'(3b)&=-150b^5-7895b^4+72740b^3-132975b^2-195840b+408564<0.
\end{align*}
Therefore, it suffices to show that
the second differential $H''(a)$ satisfies the following two properties:
\begin{itemize}
\topsep=0pt
\item
$H''(2b+6)<0$ for $b\ge 3$, and
\item
$H''(a)$ has at most one real root in the inteval $[2b,\,3b]$.
\end{itemize}
In fact,
\begin{align*}
H''(a)
&=60a^4-(120b+2480)a^3
+(-180b^2+4260b+36240)a^2\\
&\qquad+(240b^3+660b^2-39840b-224640)a
+(-30b^4+220b^3-22500b^2+250830b+249876).
\end{align*}
It is routine to check that for $b\ge 3$,
\[
H''(2b+6)
=-270b^4-1260b^3-10140b^2+127710b-251244<0.
\]
In order to prove that $H''(a)$ has at most one real root in $[2b,\,3b]$,
it suffices to show that $H''(a)$ has three distinct real roos less than $2b$.
In fact, it is routine to check that for $b\ge 12$,
\begin{align*}
H''(0)&=-30b^4+220b^3-22500b^2+250830b+249876<0,\\
H''\brk3{\frac{b}{2}}&=\frac{135}{4}b^4+1305b^3-33360b^2+138510b+249876>0.
\end{align*}
Note that the function $H''(a)$ is a polynomial of degree 4 with positive leading coefficient. By the intermediate value theorem, we derive that $H''(a)$ has a real root in the intervals
\[
(-\infty,0),\quad
(0,b/2),\quad\text{and}\quad
(b/2,2b),
\]
respectively. This completes the proof for the fact $H(a)<0$, 
contradicting $[e_{4^63^{(N-24)/3}}]X_G\ge 0$. Hence $a\ge 3b+3$.

If $a\equiv1\pmod3$, then $a\ge 16$ and $N\ge 31$.
Setting $\lambda=5^23^{(N-10)/3}$ in \cref{pf:rec:spider:ab2} and using \cref{prop:Wolfe} we can deduce that 
\begin{align*}
&[e_{5^23^{(N-10)/3}}]X_G\\
=\ &[e_{5^23^{(N-10)/3}}]X_{P_N}
-[e_{53^{(a-4)/3}}]X_{P_{a+1}}[e_{53^{(b-3)/3}}]X_{P_{b+2}}-
[e_{3^{(a+2)/3}}]X_{P_{a+2}}[e_{5^23^{(b-9)/3}}]X_{P_{b+1}}\\
=\ &\frac{2^{(N-10)/3}}{3}\brk1{4a^2-(4b+18)a-2b^2+54b-121}.
\end{align*}
The $e$-positivity of $G$ implies the desired inequality.

If $a\equiv2\pmod3$, then $a\ge 14$ and $N\ge 29$. 
Exchanging the letters $a$ and $b$ in \cref{pf:e:spider:a0b2}, and sorting the terms according to the degree of $a$, we obtain
\begin{align*}
[e_{4^23^{(N-8)/3}}]X_G
&=2^{(N-17)/3}\brk1{-3b^2+(11-6a)b+6a^2-4a-18}\\
&=2^{(N-17)/3}\brk1{6a^2-(6b+4)a-3b^2+11b-18}.
\end{align*}
The $e$-positivity of $G$ implies the desired inequality.
\end{proof}

Note that in the formula in \cref{thm:e:spider:ab22},
the two discriminants $12b^2-180b+565$ and $27b^2-54b+112$
are positive for $b\ge 12$.
%

\section{The positivity of broom graphs and double broom graphs}\label{sec:broom}

We call the spider $S(\lambda_1,1^{d-1})$ a \emph{broom}, denoted 
\[
S(\lambda_1,1^{d-1})=\mathrm{br}(\lambda_1,d-1).
\]
When $\lambda_1\ge 2$, we call the path $P_{1+\lambda_1}$
the \emph{long leg} of the broom $\mathrm{br}(\lambda_1,d-1)$.
We have the following complete positivity classification for the family of brooms.

\begin{theorem}
The positivity classification of brooms $\{\mathrm{br}(p,l)\colon p,l\ge 2\}$ is as follows.
\begin{enumerate}
\item
$\mathrm{br}(p,l)$ is $e$-positive if and only if $p=l=2$.
\item
$\mathrm{br}(p,l)$ is Schur positive but not $e$-positive if and only if $p\in\{4,6,8,10,12\}$ and $l=2$.
\item
$\mathrm{br}(p,l)$ is not Schur positive if $p\not\in\{2,4,6,8,10,12\}$ or $l\ge 3$.
\end{enumerate}
\end{theorem}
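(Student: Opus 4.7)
The plan is to split the statement into the $e$-positivity and Schur positivity claims, and in each case reduce to a few subcases that can be dispatched by the tools established earlier in the paper. Throughout, the broom $\mathrm{br}(p,l) = S(p,1^l)$ has $n = p+l+1$ vertices and $d = l+1$ legs.

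For the $e$-positivity claim, I would apply \cref{lem:e:spider:mod} with modulus $m = 2$. Here $R_2 = 1 + (p \bmod 2) + l$, which is at least $4$ whenever $l \ge 3$, and also when $l = 2$ with $p$ odd; in both cases $R_2 \ge 2m$, contradicting the required bound $R_2 < 4$. For $l = 2$ with even $p \ge 4$, applying \cref{lem:2odds} to $\lambda = (p,1,1)$, whose two odd parts give $k_1 = k_2 = 0$ and whose single even part gives $k_3 = p/2$, yields
\[
[e_{32^{p/2}}] X_{\mathrm{br}(p,2)} = 4(0+0-p/2) + 5 = 5 - 2p < 0,
\]
ruling out $e$-positivity. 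The remaining case $(p,l) = (2,2)$ is then settled by a direct computation on the $5$-vertex spider $S(2,1,1)$.

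For Schur positivity, I would first reduce to $l = 2$ and $p$ even via \cref{thm:nS:balanced}. Since $\mathrm{br}(p,l)$ is a connected tree, and hence bipartite, it admits a unique stable bipartition whose parts have sizes $\lfloor p/2 \rfloor + 1$ and $\lceil p/2 \rceil + l$; the difference is $l-1$ when $p$ is even and $l$ when $p$ is odd. Hence the bipartition is balanced if and only if $p$ is even and $l = 2$, which rules out every other broom with $l \ge 2$.

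It then remains to classify Schur positivity of $\mathrm{br}(p,2)$ for even $p$. For each $p \in \{2,4,6,8,10,12\}$ Schur positivity can be verified by a finite computation using Russell's program, or by expanding $X_{S(p,1,1)}$ via \cref{lem:rec:Sabc} and applying the Pieri rule to the Schur expansion of the path CSFs. For $p \ge 14$ even, the goal is to exhibit a partition $\lambda$ with $[s_\lambda] X_{\mathrm{br}(p,2)} < 0$. The natural tool is the recursion
\[
X_{S(p,1,1)} = X_{P_{p+3}} + e_1 X_{P_{p+2}} - 2 e_2 X_{P_{p+1}}
\]
from \cref{lem:rec:Sabc}, together with the Pieri rule to pass from path CSFs to the above Schur coefficients; an alternative is to apply \cref{thm:s-in-X} with a carefully chosen special rim hook tabloid. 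The main obstacle will be pinpointing the right family of partitions $\lambda$ (likely of two-row shape close to the balanced bipartition $(p/2+2, p/2+1)$, or a near-rectangular shape such as $2^a 1^b$) whose Schur coefficient is provably negative uniformly for all even $p \ge 14$; Kaliszewski's hook positivity rules out the most obvious candidates, so a subtler rim-hook sign analysis will be needed.
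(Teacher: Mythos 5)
Your treatment of the $e$-positivity classification and of the reduction of Schur positivity to the case $l=2$ with $p$ even is correct and essentially the paper's: the broom is bipartite with unique stable bipartition of sizes $\lfloor p/2\rfloor+1$ and $\lceil p/2\rceil+l$, so \cref{thm:nS:balanced} kills everything except $l=2$, $p$ even; \cref{lem:2odds} gives $[e_{32^{p/2}}]X_{\mathrm{br}(p,2)}=5-2p<0$ for even $p\ge4$; and the small cases are finite computations. (Your use of \cref{lem:e:spider:mod} with $m=2$ is a harmless alternative to the paper's observation that non-Schur-positivity already implies non-$e$-positivity for $l\ge3$ or odd $p$.)

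However, there is a genuine gap at the step that carries the whole theorem: you never prove that $\mathrm{br}(p,2)$ fails to be Schur positive for even $p\ge 14$. You only state that one should ``exhibit a partition $\lambda$ with $[s_\lambda]X_{\mathrm{br}(p,2)}<0$'' and guess at candidates (two-row shapes near $(p/2+2,\,p/2+1)$, or $2^a1^b$), admitting that the choice of $\lambda$ and the sign analysis are open. That is precisely the hard content, and your candidate shapes are not the ones that work in the paper: writing $p=2p'$, the paper takes the \emph{three-row} shape $\lambda=(p'+1)^2 1$, observes that the independence number of the broom is $p'+2$ so that only three special rim hook tabloids of shape $\lambda$ (with contents $(p'+2)(p'+1)$, $(p'+2)p'1$ and $(p'+1)^21$) can contribute in \cref{thm:s-in-X}, and then explicitly enumerates the semi-ordered stable partitions of each type, getting contributions $-1$, $-(3p'+1)$ and $2p'+8$, hence $[s_\lambda]X_{\mathrm{br}(2p',2)}=6-p'<0$ for $p'\ge7$. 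Without such an explicit tabloid-by-tabloid count (or an equivalent computation via the recursion $X_{S(p,1,1)}=X_{P_{p+3}}+e_1X_{P_{p+2}}-2e_2X_{P_{p+1}}$ and the Schur expansions of paths, which you invoke but do not carry out), item (3) of the theorem is unproved for even $p\ge14$, and item (2) loses its ``only if'' direction. The proposal is therefore an outline with the decisive estimate missing rather than a proof.
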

\begin{proof}
Since $\mathrm{br}(p,l)$ is a tree, it is bipartite.
When $l\ge 3$ or $p$ is odd, the broom $\mathrm{br}(p,l)$ is not balanced,
and thus not Schur positive by \cref{thm:nS:balanced}.
Suppose that $l=2$ and $p$ is even.
By using mathematical software,
one may compute $X_{\mathrm{br}(p,l)}$ 
and obtain that $\mathrm{br}(2,2)$ is $e$-positive, 
and that $\mathrm{br}(p,2)$ for $p\in\{4,6,8,10,12\}$ is Schur positive.
Moreover,
by using \cref{lem:2odds}, we find $[e_{32^*}]X_G=5-2p<0$ for even $p\ge 4$.

Below we deal with the remaining brooms $G=\mathrm{br}(2p,2)$ with $2p\ge 14$.
We shall show that $[s_\lambda]X_G=6-p$ by using \cref{thm:s-in-X}, where
$\lambda=(p+1)^21$.
We label the center of $G$ as $v_{2p+1}$, the long leg as $v_{2p+1}v_{2p}\dotsm v_1$,
and the remaining two vertices as $u$ and $w$, see \cref{fig:broom:2p:2}.

\begin{figure}[htbp]
\begin{center}
\begin{tikzpicture}[scale=0.7]
\input{broom}
\end{tikzpicture}
\caption{The broom $\mathrm{br}(2p,2)$.}\label{fig:broom:2p:2}
\end{center}
\end{figure}
First of all, we note that the independence number $\alpha$ of $G$ is $\alpha=p+2$,
since the path $v_1\dotsm v_{2p}$ has independence number $p$ 
and the path $uv_{2p+1}w$ has independence number~$2$.
In view of \cref{thm:s-in-X}, only 3 rim hook tabloids of shape $\lambda$ need consideration, which are illustrated in \cref{fig:RHT:broom:2p:2} 
with their contents $\tau$ respectively.
Now we compute their contributions to the coefficient $[s_\lambda]X_G$ independently.

\begin{figure}[htbp]
\begin{tikzpicture}[scale=0.5]
\input{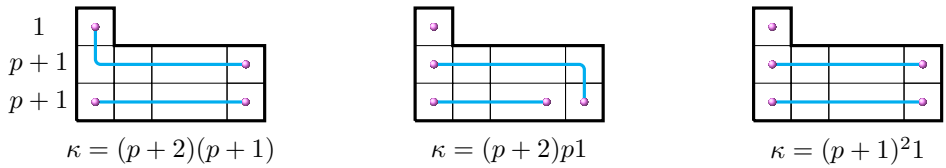}
\end{tikzpicture}
\caption{The special rim hook tabloids in the set $\mathcal{T}_{(p+1)^2 1}$ for the broom~$\mathrm{br}(2p,2)$.}
\label{fig:RHT:broom:2p:2}
\end{figure}

\begin{enumerate}
\item
The graph $G$ has only one stable partition of type $(p+2)(p+1)$, in 
which the stable set of order~$p+2$ is
$\{v_2,v_4,\dots,v_{2p},u,w\}$.
This partition contributes $-1$ to $[s_\lambda]X_G$.
\item
Let $(A,B,C)$ be a stable partition of type $(p+2)p1$, where $\abs{A}=p+2$, $\abs{B}=p$ and $\abs{C}=1$. Then the set $A$ must be of the form
\[
T_j=\{v_1,\,v_3,\,\dots,\,v_{2j-1},\,v_{2j+2},\,v_{2j+4},\,\dots,\,v_{2p},\,u,\,w\}
\quad\text{where $j\in\{0,1,\dots,p\}$.}
\]
If $j=0$, then $C$ consists of an arbitrary vertex in $V(G)\backslash T_0$.
If $1\le j\le p$, then $C$ consists of $v_{2j}$ or $v_{2j+1}$. 
Therefore, the contribution of such partitions is $-(p+1)-2p=-(3p+1)$.
\item
Let $(A,B,C)$ be a stable partition of type $(p+1)^21$, where $\abs{A}=\abs{B}=p+1$ and $\abs{C}=1$.
\begin{enumerate}
\item
If $v_{2p+1}\in A$, then $A=\{v_1,v_3,\dots,v_{2p+1}\}$. Since $V(G)\backslash A$ is stable, there are $p+2$ possibilities for the partition $(B,C)$.
\item
If $v_{2p+1}\in C$, then $V(G)\backslash\{v_{2p+1}\}$ is partitioned into two stable sets.
If $v_1\in A$, then 
\[
\{v_1,\,v_3,\,\dots,\,v_{2p-1}\}\subset A
\quad\text{and}\quad
\{v_2,\,v_4,\,\dots,\,v_{2p}\}\subset B.
\]
Thus one of the vertices $u$ and $w$ belongs to $A$ and the other to $B$.
\end{enumerate}
In summary, the contribution of this kind of partitions is 
$2(p+2+2)=2p+8$, where the factor~$2$ comes from the ordering of $A$ and $B$. 
\end{enumerate}
Hence $[s_\lambda]X_G=-1-(3p+1)+(2p+8)=6-p$, which is negative for $p\ge 7$.
\end{proof}

Denote by $\mathrm{br}'(l,p,l')$ the $(l+p+l'+1)$-vertex graph obtained by
identifing the center of the star~$S(1^{l})$ and 
the leave of the broom $\mathrm{br}(p,l')$ on its long leg.
We have the following complete positivity classification for the family of $\mathrm{br}'(l,p,l')$.

\begin{theorem}
In the graph family
\[
\mathcal{G}=\{\mathrm{br}'(l,p,l')\colon l'\ge l\ge 2,\,l'\ge 3,\,p\ge 1\},
\]
no one is $e$-positive. Moreover, only the following 10 graphs in $\mathcal{G}$ are Schur positive:
\begin{align*}
&\mathrm{br}'(2,1,3),\quad
\mathrm{br}'(2,5,3),\quad
\mathrm{br}'(2,7,3),\quad
\mathrm{br}'(2,9,3),\quad
\mathrm{br}'(2,11,3),\\
&\mathrm{br}'(3,1,3),\quad
\mathrm{br}'(3,1,4),\quad 
\mathrm{br}'(4,1,4),\quad 
\mathrm{br}'(4,1,5),\quad
\mathrm{br}'(5,1,5).
\end{align*}
\end{theorem}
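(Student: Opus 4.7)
The strategy proceeds in four coordinated stages.

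First, a bipartition balance check. Since $\mathrm{br}'(l,p,l')$ is a tree and hence bipartite, a routine parity count along the central path $c_1 = w_0, w_1, \dotsc, w_p = c_2$ together with the two bunches of pendants shows that the bipartition is balanced if and only if $p$ is odd and $l' \in \{l, l+1\}$ (the size difference of the two classes is $|l - l'|$ when $p$ is odd and at least $l + l' - 1 \ge 4$ when $p$ is even). Theorem~\ref{thm:nS:balanced} then simultaneously disposes of every unbalanced member of $\mathcal{G}$ as non-Schur-positive, hence non-$e$-positive.

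Second, I would derive a recurrence for $X_{\mathrm{br}'(l,p,l')}$ analogous to Lemma~\ref{lem:rec:Sabc}. Viewing $\mathrm{br}'(l,p,l')$ as the spider $S(p, 1^{l'})$ with $l$ extra pendants grafted at the end of its long leg, I apply Theorem~\ref{thm:rec:3del} iteratively to stable triples of pendants drawn from both hubs (available since $l \ge 2$ and $l' \ge 3$), reducing $X_{\mathrm{br}'(l,p,l')}$ to a linear combination of products $X_{P_i} X_{P_j}$. Together with Proposition~\ref{prop:Wolfe}, this converts any $e_\mu$- or $s_\lambda$-coefficient into an explicit polynomial in $(l,p,l')$.

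Third, for the 10 graphs in the claimed list -- each of bounded order -- Russell's program directly confirms Schur positivity and exhibits a negative $e$-coefficient certifying non-$e$-positivity; combined with Stage 1 this establishes that no graph in $\mathcal{G}$ is $e$-positive. Fourth, for the remaining balanced graphs -- split into families (i) $p = 1$ with $l \ge 5$ and $(l,l') \ne (5,5)$; (ii) $p = 3$ with any admissible $(l,l')$; (iii) odd $p \ge 5$ with $(l,l') \ne (2,3)$; and (iv) $(l,l') = (2,3)$ with $p = 3$ or odd $p \ge 13$ -- I plan to apply Theorem~\ref{thm:s-in-X} to a carefully tailored partition $\lambda$ (typically a near-rectangular two-row or hook shape) and show $[s_\lambda] X_{\mathrm{br}'(l,p,l')} < 0$ by combining the Stage 2 recurrence with a direct enumeration of the special rim hook tabloids whose contents are realized as stable partition types of $\mathrm{br}'(l,p,l')$.

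The main obstacle is the fourth stage, and in particular the borderline family~(iv). The fact that Schur positivity holds for $p \in \{1, 5, 7, 9, 11\}$ but breaks at $p = 3$ and again for all odd $p \ge 13$ strongly suggests that no single partition $\lambda$ will work uniformly across the family; I anticipate needing one $\lambda$ for the isolated case $p = 3$ and a different, $p$-dependent $\lambda$ whose associated Schur coefficient is a polynomial in $p$ changing sign precisely at $p = 13$. Assembling all four families with coordinated choices of $\lambda$, together with ensuring no sporadic Schur-positive examples hide in the small parameter range (which should be cleared via Russell's program applied up to a carefully chosen bound on $l + p + l'$), will constitute the bulk of the technical work.
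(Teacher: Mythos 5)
Your overall route is the same as the paper's: \cref{thm:nS:balanced} forces $p$ odd and $l'\in\{l,l+1\}$, the surviving infinite families are killed by showing a single Schur coefficient is negative via \cref{thm:s-in-X}, a finite list is then checked by software, and non-$e$-positivity follows from non-Schur-positivity except for the ten listed graphs, which the software also certifies. The genuine gap is that your Stage 4 stops exactly where the proof begins: you never specify the partitions $\lambda$ nor carry out the rim-hook/stable-partition enumerations, and that is the entire technical content of the theorem. The paper does it with three explicit choices (writing the graph as a path $v_1\dotsm v_{2p}$ with pendant sets of sizes $l$ and $l'$): the shape $(l'+p+1)(l+p-2)1$ gives a coefficient that is negative for all $l\ge 6$, hence $l\le 5$; the shape $(l+l'+p-2)(p+1)1$ gives $-2p+l+l'<0$ whenever $l\ge 3$ and $p\ge l+1$, bounding the path length in terms of $l$; and for $(l,l')=(2,3)$ the shape $(p+3)(p+1)1$ gives $6-p$, bounding the path length by $11$. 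Everything left is a finite software check. Without comparable computations, your families (i)--(iii), each infinite in $l$, remain unproved, and your forecast for family (iv) (a $p$-dependent $\lambda$ changing sign at path length $13$, plus a separate device for path length $3$) is plausible but unverified; note that one $p$-dependent shape suffices there, the sporadic case at path length $3$ simply falling into the finite check.

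Two further slips to repair. First, Stage 2's claim that iterated triple-deletion (\cref{thm:rec:3del}) plus \cref{prop:Wolfe} converts \emph{Schur} coefficients into explicit polynomials in $(l,p,l')$ is unsupported: \cref{prop:Wolfe} only gives $e$-coefficients of paths, and no Schur analogue for paths or products of paths is available in the paper; the workable method, and the one your Stage 4 must actually use, is to apply \cref{thm:s-in-X} directly to the graph, enumerating the special rim hook tabloids of the chosen shape and counting the stable partitions of the realizable types. Second, Stage 3's assertion that Stages 1 and 3 together already establish that no graph in $\mathcal{G}$ is $e$-positive is premature: the balanced graphs outside the list of ten are excluded only after Stage 4 shows they are not Schur positive, so the non-$e$-positivity claim for the whole family cannot be closed before that step.
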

\begin{proof}
Suppose that $\mathrm{br}'(l,p,l')$ is Schur positive.
Since it is bipartite, it is balanced by \cref{thm:nS:balanced}.
It follows that~$p$ is odd and $l'\in\{l,\,l+1\}$. 
For avoiding fractions, we consider the graph
\[
G=\mathrm{br}'(l,\,2p-1,\,l')
\]
for $p\ge 1$. 
We regard $G$ as consisting of the path $v_1v_2\dotsm v_{2p}$,
and the stars with edge sets $\{v_1x\colon x\in X\}$ and $\{v_{2p}y\colon y\in Y\}$,
where $X$ and $Y$ are disjoint vertex sets of orders $l$ and $l'$ respectively, 
see \cref{fig:SPS}.
Let $W=\{v_1,\dots,v_{2p}\}$.
Then $V(G)=X\sqcup W\sqcup Y$ and $\abs{V(G)}=l+l'+2p$.
\begin{figure}[htbp]
\begin{center}
\begin{tikzpicture}[scale=0.6]
\input{SPS}
\end{tikzpicture}
\caption{The graph $\mathrm{br}'(l,\,2p-1,\,l')$.}\label{fig:SPS}
\end{center}
\end{figure}

Our first goal is to show that $l\le 5$. Suppose to the contrary that $l\ge 6$ and consider the partition
\[
\lambda=(l'+p+1)(l+p-2)1.
\]
Let $T$ be a special rim hook tabloid of shape $\lambda$ and some content $\tau$. 
By definition, we know that 
\begin{equation}\label[ineq]{pf:ub:ltau}
\ell(\tau)\le \ell(\lambda)=3,
\end{equation}
and the maximum part $\tau_1$ in $\tau$ satisfies
\begin{equation}\label[ineq]{pf:lb:tau1}
\tau_1\ge \lambda_1=l'+p+1.
\end{equation}
Let $A$ be a stable set in a stable partition of type $\tau$ such that $\abs{A}=\tau_1$. The two inequalties above imply the following further results.
\begin{itemize}
\item
$\{v_1,v_{2p}\}\cap A=\emptyset$. Otherwise,
one of the sets $X$ and $Y$ would have empty intersection with $A$, which implies that $\abs{A}\le l'+p$, contradicting \cref{pf:lb:tau1}.
\item
$\ell(\tau)=3$. Otherwise, one would have $\ell(\tau)=2$ by \cref{pf:ub:ltau}. It follows that the vertices on the path $v_1\dotsm v_{2p}$ are partitioned into two stable sets.
Therefore, the set $A$ must contain exactly one of the endpoints $v_1$ and $v_{2p}$, contradicting the previous result.
\end{itemize}
Thus a feasible type $\tau$ to form a special rim hook tabloid of shape $\lambda$ must be
$(l'+p+2)(l+p-3)1$ or $(l'+p+1)(l+p-2)1$,
see \cref{fig:RHT-SPS}.
\begin{figure}[htbp]
\begin{center}
\begin{tikzpicture}[scale=0.6]
\input{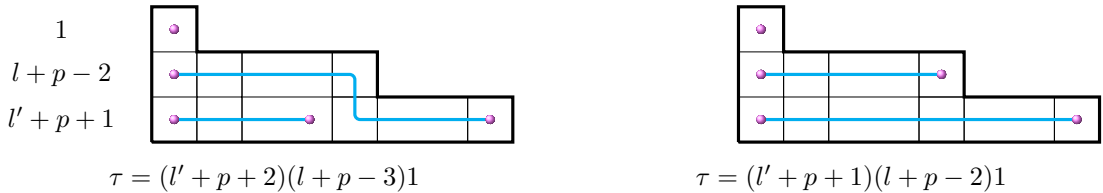}
\end{tikzpicture}
\caption{The special rim hook tabloids in the set $\mathcal{T}_{(l'+p+1)(l+p-2)1}$ for the graph~$\mathrm{br}'(l,\,2p-1,\,l')$.}\label{fig:RHT-SPS}
\end{center}
\end{figure}
In view of \cref{thm:s-in-X},
we need to consider stable partitions $(A,B,C)$ such that 
\[
\abs{B}\in\{l+p-3,\,l+p-2\}\quad\text{and}\quad
\abs{C}=1.
\]
It follows that $\abs{A}>\abs{B}>\abs{C}$ and $\abs{B}\ge p+3$.
If $\{v_1,v_{2p}\}\subset B$,
then the stability of $B$ implies that $B\subset W$. Thus $\abs{B}\le p$, 
a contradiction.
Hence one of the vertices $v_1$ and $v_{2p}$ is in~$B$,
and the other forms the singleton $C$.

Suppose that $v_1\in B$ and $C=\{v_{2p}\}$.
Since each of the vertices $v_2$, $\dots$, $v_{2p-1}$ is in $A$ or $B$,
\[
\{v_1,v_3,\dots,v_{2p-1}\}\subset B
\quad\text{and}\quad
B\backslash\{v_1,v_3,\dots,v_{2p-1}\}\subset Y.
\]
Thus there are $\binom{l'}{\abs{B}-p}$ possibilities 
for the partition $(A,B,C)$.
For the other case that $v_{2p}\in B$ and $C=\{v_1\}$, 
one may derive by symmetry that there are $\binom{l}{\abs{B}-p}$ possibilities for the partition. Note that the inequalities $\abs{A}>\abs{B}>\abs{C}$ guarantee that the ordering of stable sets of the same order can be ignored.
Hence
\[
[s_\lambda]X_G
=\brk4{\binom{l}{l-2}-\binom{l}{l-3}}+\brk4{\binom{l'}{l-2}-\binom{l'}{l-3}},
\]
which is negative for $l\ge 6$. This contradiction implies that $l\le 5$.

The second goal of ours is to show that $p\le l$ if $l\ge 3$.
Suppose to the contrary that $p\ge l+1$ and $l\ge 3$. Consider the partition
\[
\lambda=(l+l'+p-2)(p+1)1.
\]
Let $T$ be a special rim hook tabloid of shape $\lambda$ and content $\tau$. Since $l\ge 3$, we find
\[
\tau_1\ge \lambda_1=l+l'+p-2\ge l'+p+1.
\]
Let $A$ be a stable set in a stable partition of type $\tau$ such that $\abs{A}=\tau_1$. Along the same lines for the first goal, we can derive that 
$\{v_1,v_{2p}\}\cap A=\emptyset$
and $\ell(\tau)=3$. It follows that $\tau$ must be
\[
(l+l'+p-1)p1
\quad\text{or}\quad
(l+l'+p-2)(p+1)1.
\]
Consider stable partitions $(A,B,C)$ such that 
\[
\abs{A}\in\{l+l'+p-1,\,l+l'+p-2\}\quad\text{and}\quad
\abs{C}=1.
\]
It follows that $\abs{A}>\abs{B}>\abs{C}$.

If $\abs{A}=l+l'+p-1$, then 
\[
A=X\cup Y\cup \{v_2,\,v_4,\,\dots,\,v_{2j},\,v_{2j+3},\,v_{2j+5},\,\dots,\,v_{2p-1}\}\quad\text{for some $j\in\{0,1,\dots,p-1\}$}.
\]
For each $j\in\{0,1,\dots,p-1\}$, the singleton $C$ must be $\{v_{2j+1}\}$ or $\{v_{2j+2}\}$. Thus the negative part in $[s_\lambda]X_G$ by \cref{thm:s-in-X}
is $-2p$.

Suppose that $\abs{A}=l+l'+p-2$.
Since $\abs{A\cap W}\le p-1$ and $\abs{X\cup Y}=l+l'$, 
we find $\abs{A\cap W}\in\{p-1,\,p-2\}$. Denote $W'=W\backslash A$.
Since $l\ge 2$, we find $\{v_1,v_{2p}\}\subset W'$.
Suppose that $\abs{A\cap W}=p-2$. 
Then $X\cup Y\subset A$.
The number of components of the induced subgraph $G[W']$ is 
$\abs{A\cap W}+1$.
Since the graph $G[W']$ is partitioned into the stable set $B$ and the singleton~$C$,
at most one of its components is not an isolated vertex, 
and that component (it it exists) must be the path $P_2$ or $P_3$.
Since $\abs{A\cap W}=p-2$, we find $\abs{W'}\le p+1$.
It follows that $\abs{W}\le 2p-1$, which is absurd.
This proves 
\[
\abs{A\cap W}=p-1.
\]
As a consequence, the set $X\cup Y\backslash A$ is a singleton, say, $\{u\}$.
Again, the induced subgraph $G[W'\cup\{u\}]$ is partitioned into the stable set $B$ and the singleton $C$, and consists of several isolated vertices and the path $P_2$ or $P_3$. 
\begin{itemize}
\item
If $u\in X$, then $G[W'\cup\{u\}]$ has a subgraph $uv_1$. 
Thus every component of $G[W'\cup\{u\}]$ which does not contain $u$ must be a singleton. It follows that
\[
A\cap W=\{v_{2p-1},\,v_{2p-2},\,\dots,\,v_3\}
\quad\text{and}\quad
B=\{v_1\}.
\]
Therfore, this case gives $l$ possibilities for the vertex $u$,
which determines the partition $(A,B,C)$ as a consequence.
\item
The other case $u\in Y$ gives $l'$ possibilities for the partition $(A,B,C)$,
by symmetry.
\end{itemize}
In summary, 
\[
[s_\lambda]X_G=-2p+l+l'\le -2(l+1)+l+(l+1)<0.
\]
This contradiction proves $p\le l$.

Thirdly, we consider the graphs $G=\mathrm{br}'(2,\,2p-1,\,3)$ and will show that $p\le 6$.
Note that $G$ has independence number $p+4$.
Suppose to the contrary that $p\ge 7$.
For the partition $\lambda=(p+3)(p+1)1$,
the set $\mathcal{T}_{\lambda}$ consists of 3 tabloids, see \cref{fig:RHT:BS:l=2:l'=3}. 
\begin{figure}[htbp]
\begin{tikzpicture}[scale=0.5]
\input{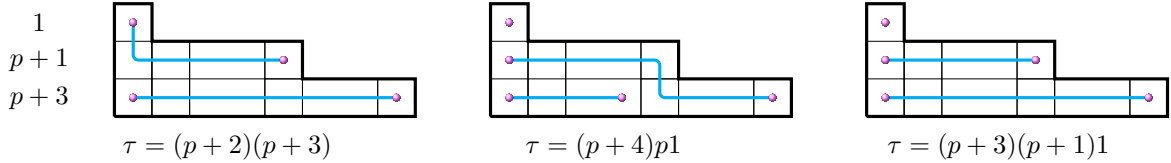}
\end{tikzpicture}
\caption{The special rim hook tabloids in the set $\mathcal{T}_{(p+3)(p+1) 1}$ for the graph~$\mathrm{br}'(2,\, 2p-1,\, 3)$.}
\label{fig:RHT:BS:l=2:l'=3}
\end{figure}

For the type $(p+3)(p+2)$, $G$ has a unique stable partition 
\[
(\{v_1,\,v_3,\,\dots,\,v_{2p-1}\}\cup Y,\ \{v_2,\,v_4,\,\dots,\,v_{2p}\}\cup X).
\]
This partition contributes $-1$ to $[s_\lambda]X_G$.
For stable partitions of types $(p+4)p1$ and $(p+3)(p+1)1$,
we can suppose that $(A,B,C)$ is such a partition with $\abs{A}>\abs{B}>\abs{C}$.

Suppose that $\abs{A}=p+4$. Then $\{v_1,v_{2p}\}\cap A=\emptyset$.
It follows that $\abs{A\cap W}\le p-1$ and 
\[
A=X\cup Y\cup\{v_2,\,v_4,\,\dots,\,v_{2j},\,v_{2j+3},\,v_{2j+5},\,\dots,\,v_{2p-1}\}
\quad\text{for some $j\in\{0,1,\dots,p-1\}$}.
\]
For each $j\in\{0,1,\dots,p-1\}$, the singleton $C$ is either $\{v_{2j+1}\}$ or $\{v_{2j+2}\}$.
Therefore, such partitions contribute $-2p$ to $[s_\lambda]X_G$.

Suppose that $\abs{A}=p+3$ and $\abs{B}=p+1$. 
Then $\abs{A\cap W}\in\{p-2,\,p-1,\,p\}$.
\begin{itemize}
\item
If $\abs{A\cap W}=p-2$, then $X\cup Y\subset A$.
It follows that $B\subset W$ and thus $\abs{B}\le p$, a contradiction.
\item
If $\abs{A\cap W}=p-1$, then there exists $u\in X\cup Y$ such that $X\cup Y\backslash A=\{u\}$,
and the set $B$ is contained in the subgraph $G[\{u\}\cup W]$.
\begin{itemize}
\item
Suppose that $u\in X$.
Since $B$ is stable and of order $p+1$, we derive that $B=\{u,\,v_2,\,v_4,\,\dots,\,v_{2p}\}$.
Since the vertex in $X\backslash \{u\}$ is in $A$, we find $v_1\not\in A$, and $C=\{v_1\}$.
Thus the contribution of the case $u\in X$ is 2 for $\abs{X}=2$. 
\item
For the same reason, the case $u\in Y$ contributes 3, for $\abs{Y}=3$. 
\end{itemize}
The total contribution is $5$.
\item
If $\abs{A\cap W}=p$, then $\{v_1,v_{2p}\}\cap A\ne\emptyset$. Since $\abs{A}=p+3$, we find
\[
A=\{v_1,\,v_3,\,\dots,\,v_{2p-1}\}\cup Y.
\]
Since the singleton $C$ may consist of any single vertex of the $(p+2)$-set $X\cup\{v_2,v_4,\dots,v_{2p}\}$,
the contribution of this case is $p+2$.
\end{itemize}
Summing up all contributions above, we obtain
\[
[s_\lambda]X_G=-1-2p+5+(p+2)=6-p<0.
\]
This contradiction proves that $p\le 6$.

Now, the remaining graphs in the family $\mathrm{br}'(l,p,l')$ are listed as follows.
\begin{itemize}
\item
$\mathrm{br}'(5,\,2p-1,\,l')$ with $l'\in\{5,6\}$ and $p\in[5]$.
\item
$\mathrm{br}'(4,\,2p-1,\,l')$ with $l'\in\{4,5\}$ and $p\in[4]$.
\item
$\mathrm{br}'(3,\,2p-1,\,l')$ with $l'\in\{3,4\}$ and $p\in[3]$.
\item
$\mathrm{br}'(2,\,2p-1,\,3)$ with $p\in[6]$.
\end{itemize}
We compute the chromatic symmetric function of each of these graphs by using mathematical software, and obtain the desired classification. 
\end{proof}

\begin{proposition}
For any integer $b\ge 1$, the graph $\mathrm{br}'(2,b,2)$ is not $e$-positive.
\end{proposition}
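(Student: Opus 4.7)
The plan is to invoke \cref{thm:Wolfgang} by exhibiting an integer partition of the vertex set for which no connected partition of $G=\mathrm{br}'(2,b,2)$ realizes it. Label the inner path as $v_0v_1\cdots v_b$, attach leaves $u_1,u_2$ to $v_0$ and leaves $w_1,w_2$ to $v_b$, so that $G$ is a tree on $n=b+5$ vertices with edge set of size $b+4$. I will show that $G$ admits no connected partition of type $(b+3,\,2)$; since the $e$-positivity of $G$ would force such a partition to exist by \cref{thm:Wolfgang}, this suffices.

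A connected block of size~$2$ in a tree must induce an edge, so it remains to enumerate the edges of $G$ and check that in every case the removal of both endpoints from $G$ disconnects the remaining $b+3$ vertices. Up to the obvious left-right symmetry the edges fall into three orbits: (i) a leaf edge $\{u_i,v_0\}$, whose removal strands the sibling leaf at the same center since its only neighbor $v_0$ has been removed; (ii) the edge $\{v_0,v_1\}$, whose removal strands both leaves $u_1,u_2$; and (iii) an interior path edge $\{v_i,v_{i+1}\}$ with $1\le i\le b-2$, which exists only when $b\ge 3$ and whose removal splits the path into the two arcs $v_0\cdots v_{i-1}$ and $v_{i+2}\cdots v_b$, joined in $G$ only through the removed vertices. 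Together with their right-hand counterparts this exhausts the $b+4$ edges, and in each instance the induced subgraph on the remaining $b+3$ vertices is disconnected.

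Since no edge of $G$ provides a connected partition of type $(b+3,\,2)$, \cref{thm:Wolfgang} implies that $G$ is not $e$-positive, uniformly in $b\ge 1$. There is no real computational obstacle here; the only substantive step is the initial choice of the witnessing type $\tau=(n-2,\,2)$, after which the case analysis on the handful of edge orbits is routine and the enumeration is visibly exhaustive.
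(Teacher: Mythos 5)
Your argument is correct: in the tree $\mathrm{br}'(2,b,2)$ every edge $xy$ satisfies $\deg(x)+\deg(y)\ge 4$, so deleting its two endpoints always leaves at least two components, hence there is no connected partition of type $(b+3,2)$ of the $(b+5)$-vertex graph, and \cref{thm:Wolfgang} rules out $e$-positivity uniformly in $b$. This is, however, a genuinely different route from the paper. The paper splits by parity: for even $b$ it observes the tree is an unbalanced bipartite graph and invokes \cref{thm:nS:balanced} to kill even Schur positivity, while for odd $b=2p-1$ it derives recurrences for $X_{\mathrm{br}'(2,2p-1,2)}$ and $X_{\mathrm{br}(2a-1,2)}$ from the triple-deletion property (\cref{thm:rec:3del}) and uses \cref{prop:Wolfe} to compute the explicit coefficient $[e_{(2p+2)2}]X_G=-2p-4<0$. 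Your approach buys brevity and uniformity: one structural obstruction, no coefficient extraction, no parity split. The paper's approach buys more information: it exhibits a concrete negative $e$-coefficient, it sets up the triple-deletion recurrences reused in its figures and computations, and its even-$b$ case proves the stronger fact of non-Schur-positivity, which is exactly what motivates restricting \cref{conj:Schur:SPS:22} to odd inner paths --- a distinction your argument does not see, since Wolfgang's criterion says nothing about Schur positivity. As a small polish point, you could note explicitly that in a tree the complement of an edge $xy$ has exactly $\deg(x)+\deg(y)-2$ components, which replaces your orbit-by-orbit check (including the slightly degenerate $b=1$ case) with a one-line degree count.
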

\begin{proof}
For even $b$, the graph $\mathrm{br}'(2,b,2)$ is not balanced and thus not Schur positive by \cref{thm:nS:balanced}. 
Let $G=\mathrm{br}'(2,\,2p-1,\,2)$ where $p\ge 1$.
We will show that $[e_{(2p+2)2}]X_G<0$.

Consider $G$ as consisting of the path $v_1\dotsm v_{2p}$
and the stars with edge sets $\{v_1x,\,v_1x'\}$ and $\{v_{2p}y,\,v_{2p}y'\}$.
Taking $e_1=v_2v_1$, $e_2=v_1x$ and $e_3=xv_2$ in \cref{thm:rec:3del}, we obtain
\begin{equation}\label{pf:csf:SPS:2b2}
X_G=e_1X_{G-x}+X_{\mathrm{br}(2p+1,\,2)}-2e_2X_{\mathrm{br}(2p-1,\,2)}.
\end{equation}
By using \cref{thm:rec:3del} in the same way, we obtain 
\begin{equation}\label{pf:csf:broom:2a-1:2}
X_{\mathrm{br}(2a-1,\,2)}
=e_1X_{P_{2a+1}}+X_{P_{2a+2}}-2e_2X_{P_{2a}},\quad\text{for $a\ge 1$}.
\end{equation}
These two relations are illustrated in \cref{fig:triple-deletion:2bridges},
in which the triple $(A,B,C)$ is $(\mathrm{br}(2p-2,2),\,K_2,\,K_1)$ for \cref{pf:csf:SPS:2b2} and $(P_{2a-2},\,K_2,\,K_1)$ for \cref{pf:csf:broom:2a-1:2}.
\begin{figure}[htbp]
\begin{center}
\begin{tikzpicture}[scale=.5]
\input{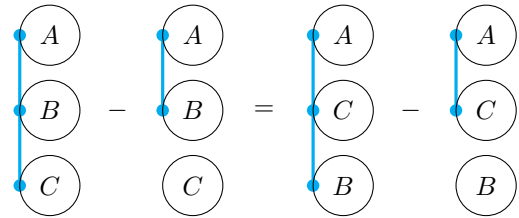}
\end{tikzpicture}
\caption{The triple-deletion rule for bridges $e_1$ and $e_2$.}\label{fig:triple-deletion:2bridges}
\end{center}
\end{figure}
Extracting the coefficients of $e_{2a+2}$ and $e_{2a}e_2$ from each side of \cref{pf:csf:broom:2a-1:2}, we can compute by \cref{prop:Wolfe} as
\begin{align*}
[e_{2a+2}]X_{\mathrm{br}(2a-1,\,2)}
&=[e_{2a+2}]X_{P_{2a+2}}=2a+2\quad\text{and}\\
[e_{2a}e_2]X_{\mathrm{br}(2a-1,\,2)}
&=[e_{2a}e_2]X_{P_{2a+2}}-2[e_{2a}]X_{P_{2a}}
=(6a-2)-2\cdotp 2a=2a-2.
\end{align*}
Extracting the coefficient of $e_{(2p+2)2}$ from both sides of \cref{pf:csf:SPS:2b2},
with the aid of the two fomulas above, we obtain
\[
[e_{(2p+2)2}]X_G
=[e_{2p+2}e_2]X_{\mathrm{br}(2p+1,\,2)}-2[e_{2p+2}]X_{\mathrm{br}(2p-1,\,2)}
=2p-2(2p+2)
=-2p-4
<0.
\]
This completes the proof.
\end{proof}

\begin{conjecture}\label{conj:Schur:SPS:22}
For any integer $p\ge 1$, the graph $\mathrm{br}'(2,2p-1,2)$ is Schur positive.
\end{conjecture}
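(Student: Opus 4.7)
The plan is to apply the combinatorial formula of \cref{thm:s-in-X} to every partition $\lambda \vdash 2p+4$ and verify that $[s_\lambda] X_G \ge 0$, where $G = \mathrm{br}'(2, 2p-1, 2)$. I first determine the independence number of $G$: a maximum stable set must avoid both $v_1$ and $v_{2p}$ in order to accommodate all of $X \cup Y$, giving $\alpha(G) = 4 + (p-1) = p+3$. Consequently $[s_\lambda] X_G = 0$ whenever $\lambda_1 > p+3$, and the set $\mathcal{T}_\lambda$ is restricted to special rim hook tabloids whose topmost rim hook has length at most $p+3$, which dramatically limits the partition shapes in play.

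For each admissible $\lambda$, I would parameterize stable partitions of $G$ by three pieces of data: the way the path $v_1 v_2 \cdots v_{2p}$ is cut into contiguous intervals that form the $W$-restrictions of the blocks (each such restriction being an alternating stable set inside the interval), together with the assignments of the pairs $\{x, x'\}$ and $\{y, y'\}$ either to the unique block meeting $v_1$ or $v_{2p}$ respectively, or to their own blocks. The symmetries $x \leftrightarrow x'$, $y \leftrightarrow y'$, and the global reflection $v_i \leftrightarrow v_{2p+1-i}$ with $X \leftrightarrow Y$ then group stable partitions into orbits, reducing the count of $\tilde{a}_\kappa$ in \cref{thm:s-in-X} to counting orbit representatives.

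I would then compute $[s_\lambda] X_G$ in closed form by handling the extremal shapes $\lambda_1 \in \{p+3, p+2, p+1\}$ separately, where $\mathcal{T}_\lambda$ is small and explicitly enumerable along the lines of \cref{fig:RHT:broom:2p:2,fig:RHT-SPS}, and by designing, for smaller $\lambda_1$, a sign-reversing involution on pairs $(T, \rho)$ that cancels contributions of opposite sign, leaving only positive residual terms. A complementary reduction is available via the triple-deletion identity $X_G = e_1 X_{G-x} + X_{\mathrm{br}(2p+1, 2)} - 2 e_2 X_{\mathrm{br}(2p-1, 2)}$ established in the previous proof, which expresses $[s_\lambda] X_G$ in terms of Schur coefficients of smaller graphs and would allow an induction on $p$ once a sufficiently large finite base case is verified with Russell's program.

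The hard part will be uniformity in $p$: the number of admissible rim hook tabloids for fixed small $\lambda_1$ grows rapidly with $p$, and the positive contributions arise from many widely distributed stable partitions that do not obviously match the negative ones term by term. I expect the decisive step to be either the construction of such a sign-reversing involution, guided by the $X \leftrightarrow Y$ symmetry and the combinatorics of stable partitions with no singleton blocks inside $X \cup Y$, or an explicit closed-form polynomial identity expressing $[s_\lambda] X_G$ as a manifestly non-negative polynomial in $p$ by means of the path-product decomposition supplied by \cref{prop:gf:path:cycle}.
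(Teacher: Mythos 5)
You should be aware that the paper does not prove this statement at all: it is posed as \cref{conj:Schur:SPS:22} and only verified computationally up to $2p-1=19$, so there is no proof of record to match. Judged on its own, your proposal is not a proof either; it is a research programme whose decisive step is explicitly deferred. The reduction to shapes with $\lambda_1\le\alpha(G)=p+3$ and the parameterization of stable partitions by cuts of the path together with the placement of $\{x,x'\}$ and $\{y,y'\}$ are sound preliminaries, but the actual nonnegativity of $[s_\lambda]X_G$ for \emph{every} admissible $\lambda$ and \emph{every} $p$ rests on a sign-reversing involution (or a closed-form polynomial identity) that you never construct. As your own last paragraph concedes, the negative and positive contributions in \cref{thm:s-in-X} do not match up term by term, and for shapes with many rows the set $\mathcal{T}_\lambda$ and the relevant stable partitions proliferate; handling only the extremal shapes $\lambda_1\in\{p+1,p+2,p+3\}$, as in the broom computations, leaves the bulk of the coefficients unverified. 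That missing cancellation argument is exactly the content of the conjecture, so the gap is essential, not technical.

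The proposed fallback induction also does not work as described. The identity $X_G=e_1X_{G-x}+X_{\mathrm{br}(2p+1,\,2)}-2e_2X_{\mathrm{br}(2p-1,\,2)}$ does not relate $\mathrm{br}'(2,2p-1,2)$ to a smaller member of the same family: $G-x=\mathrm{br}(2p,2)$ and the other two terms are brooms, so there is no recursion in $p$ within the family $\{\mathrm{br}'(2,2p-1,2)\}$. Worse, the right-hand side carries a genuinely negative term, and the brooms appearing there are themselves not Schur positive for these parameters (for instance $\mathrm{br}(2p\pm1,2)$ is unbalanced, hence not Schur positive by \cref{thm:nS:balanced}, and $\mathrm{br}(2p,2)$ fails Schur positivity once $2p\ge 14$ by the broom classification). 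Schur positivity is therefore not inherited from the pieces, even granting that multiplication by $e_1$ or $e_2$ preserves it via Pieri's rule; a finite base case checked by Russell's program cannot be propagated. If you want to pursue this conjecture, the involution on pairs $(T,\rho)$ exploiting the $X\leftrightarrow Y$ and reflection symmetries is the promising direction, but it must be exhibited concretely and verified to be sign-reversing and weight-preserving across all shapes, which is precisely what is still open.
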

We checked that \cref{conj:Schur:SPS:22} is true up to $2p-1=19$.

\bibliography{../csf}

\end{document}